\newcommand\bs{\boldsymbol}
\newcommand\ds{\displaystyle}
\def\ee{\varepsilon}
\def\ep{\varepsilon}
\def\eu{{\rm e}}
\def\lit{{\lim_{t \to +\infty}}}
\def\lito{{\lim_{t \to -\infty}}}
\newcommand{\R}{{\mathbb R}}
\newcommand{\K}{{\mathcal K}}
\newcommand{\HH}{{\mathcal H}}
\newcommand{\linea}{{\mathcal L}}
\newcommand{\RDIR}{{\mathcal R}}
\newcommand{\ov}[1]{\overline{#1}}
\newcommand{\und}[1]{\underline{#1}}
\newcommand{\Q}{{\bs Q}}  
\newcommand{\QQ}{{\bs Q}}
\newcommand{\SSS}{{\bs S}}
\newcommand{\QQQ}{{\bs{{\cal Q}}}}
\newcommand{\RR}{{\bs R}}
\newcommand{\EE}{{\bs E}}
\def\Hinc{\boldsymbol{({\rm H}_\uparrow\!\!\!)}}
\def\Hell{\boldsymbol{({\rm H}_\ell)}}
\def\Hstab{\boldsymbol{({\rm W}_s)}}
\def\Hinstab{\boldsymbol{({\rm W}_u)}}
\def\lloc{\textrm{loc}}
\def\la{\lambda}
\newcommand{\fran}[1]{\textcolor{red}{#1}}
\newtheorem{theorem}{Theorem}
\newtheorem{lemma}[theorem]{Lemma}
\newtheorem{proposition}[theorem]{Proposition}
\newtheorem{corollary}[theorem]{Corollary}
\newtheorem{remark}[theorem]{Remark}
\title{A bifurcation phenomenon for the critical Laplace and $p$-Laplace equation in the ball}
\author{Francesca Dalbono\thanks{Dipartimento di Matematica e Informatica, Universit\`a degli Studi di Palermo,
Via Archirafi 34, 90123 Palermo - Italy. email: francesca.dalbono@unipa.it
}\\
Matteo Franca\thanks{Dipartimento di   Matematica,
Universit\`a degli Studi di Bologna, Piazza di porta san Donato 5 - 40126, Bologna -
Italy. email: matteo.franca4@unibo.it  }\\
Andrea Sfecci\thanks{Dipartimento di Matematica e Geoscienze,
Universit\`a degli Studi di Trieste, Via Valerio 12/1, 34127, Trieste -
Italy. email: asfecci@units.it.}}
\date{Received: date / Accepted: date}
\begin{document}

\maketitle

\begin{abstract}
In this paper we show that the number of radial positive solutions of the following critical problem
\begin{equation*}
\begin{cases}
\Delta_p u(x) + \la\K(|x|) \,u(x) \, |u(x)|^{q-2} =0\,, \\
\noalign{\smallskip}
u(x)>0 & \quad |x|<1,\\
\noalign{\smallskip}
u(x)=0 & \quad |x|=1,
\end{cases}
\end{equation*}
where  $q= \frac{np}{n-p}$,  $\frac{2n}{n+2} \le p \le 2$ and $x \in \R^n$,
undergoes a bifurcation phenomenon.
Namely, the problem admits one solution for any $\la>0$ if $\K$ is steep enough at $0$, while it admits no solutions
for $\la$ small and two solutions for $\la$ large if $\K$ is too flat at $0$.

The existence of the second solution is new, even in the classical Laplace case.
The proofs use  Fowler transformation and dynamical systems tools.
\end{abstract}

\medskip

\noindent

\textbf{Mathematics Subject Classification (MSC)}: 35J92; Secondary: 35J62, 35B33, 35B09, 34C45.\\
 \textbf{Keywords:}   Scalar curvature equation, bifurcation phenomena, radial solutions, order of flatness,
Fowler transformation,
invariant manifold, phase plane analysis\\

\section{Introduction}

In this paper we focus on  positive radial solutions for the generalized scalar curvature equation
\begin{equation}
\label{plaplace}
\Delta_p u + \K(|x|)\, u|u|^{q-2} =0,
\end{equation}
where $\Delta_p u=\textrm{div}(\nabla u |\nabla u|^{p-2})$ denotes the $p$-Laplace operator,
$x\in\mathbb{R}^n$,
$2\,\frac{n}{2+n}\leq p\leq 2$, \hspace{0.2mm}
 $1<p<n$, and $q$ is the Sobolev critical exponent
\begin{equation}
\label{scalcur}
q=p^*=\frac{np}{n-p}.
\end{equation}
The function  $\K:[0,+\infty[{}\to {}]0,+\infty[{}$ is assumed to be $C^1$, for simplicity.
We are interested in {\em crossing} solutions, which means solutions of the problem
\begin{equation}\label{Problem}
\begin{cases}
\Delta_p u(x) + \K(|x|) \,u(x) \, |u(x)|^{q-2} =0\\
\noalign{\smallskip}
u(x)>0 & \quad |x|<\RDIR\\
\noalign{\smallskip}
u(x)=0 & \quad |x|=\RDIR.
\end{cases}
\end{equation}

In particular, we will show that the existence of radial solutions of~\eqref{Problem} depends on the behavior of
$\K$ in a neighborhood of zero, and on the length of the radius $\RDIR$.

Since we exclusively concentrate our study on radial solutions, we can reduce equation~\eqref{plaplace} to
the following singular ordinary differential equation
\begin{equation} \label{eq.pla}
(r^{n-1} \, u'(r)|u'(r)|^{p-2})'+ r^{n-1}\,\K(r) \, u|u|^{q-2}=0 \,,
\end{equation}
where ``\,$\,'\,$\,'' denotes the differentiation with respect to $r = |x|$, and, with a slight
abuse of notation, $u(r) = u(x)$.

We say that a solution $u(r)$ of~\eqref{eq.pla} is {\em regular} if and only if $\lim_{r \to 0} u(r)=d>0$ for a suitable $d>0$: in this case it will be denoted by $u(r;d)$.
It is well known that $u(r;d)$ exists and it is unique for any $d>0$, cf. e.g.~\cite{FLS,GHMY,KabYY,KNY}.
Note that $u'(0;d)=0$.

\medbreak

As an alternative, from a standard scaling argument, we can find a counterpart for the {\em eigenvalue} equation where we fix $\RDIR=1$ for definiteness and we multiply the potential $\K$ by a parameter $\la$, i.e.
\begin{equation}\label{Problem.la}
\begin{cases}
\Delta_p u(x) + \la\K(|x|) \,u(x) \, |u(x)|^{q-2} =0\\
\noalign{\smallskip}
u(x)>0 & \quad |x|<1\\
\noalign{\smallskip}
u(x)=0 & \quad |x|=1.
\end{cases}
\end{equation}
Indeed, if we set
\begin{equation}\label{change.la}
w(s)=u(r),\qquad s=\frac{r}{\la^{1/p}},
\end{equation}
and $\mathscr{K}(s)=\K(s \la^{1/p})$, then  $u$ solves equation~\eqref{eq.pla} if and only if $w$ solves
$$(s^{n-1}\, w'(s)|w'(s)|^{p-2})'+ \la s^{n-1} \,\mathscr{K}(s) \,w|w|^{q-2}=0,$$
and $w(1)=0$ if and only if $u(\RDIR)=0$, where $\RDIR:=\la^{1/p}$.

Hence, the number of solutions $u$ of~\eqref{Problem} as $\RDIR$ varies coincides with
the number of solutions $u$ of~\eqref{Problem.la} as $\la$ varies.


\medbreak

The scalar curvature equation~\eqref{plaplace} has been extensively studied in the literature due to its significance in a broad variety of applications, such as
Riemannian geometry, astrophysics,  quantum mechanic, chemistry, theory of non-Newtonian fluids and elasticity (cf.~\cite{FrancaCAMQ} for more detailed references on application of $p$-Laplace equations and e.g.~\cite{CL97,CL1}
for application to Riemannian geometry in the $p=2$ case).

 In many phenomena, positivity of solutions has a physical relevance.

Non-linear eigenvalue problems similar to \eqref{Problem.la} are nowadays a classical topic, see e.g. \cite{BrN} and the more recent \cite{BN, DFl,FFna, GW, MP}
where the reaction term $\K(r) u^{q-1}$ is replaced by a sum of a linear and a term either critical or supercritical   with respect to the Sobolev exponent. We address the interested reader to the introduction of \cite{BrN,DFl, FFna} for a discussion of several possible diagrams appearing as different reaction terms are considered.

\medbreak

A key role in our analysis is played by the following hypothesis
\begin{description}
\item[$\Hell$] There are some constants $A,B,\ell>0$ such that
$$
\K(r)= A + B r^{\ell} + h(r) \qquad\mbox{and}\qquad
\lim_{r\to 0}  \frac{|h(r)|+r|h'(r)|}{r^{\ell}}=0\,.
$$
\end{description}
The existence and the multiplicity of the solutions of~\eqref{Problem} and~\eqref{Problem.la}
depend  crucially  on $\ell$, which, in literature, is often referred to as the \emph{order of flatness} of the function $\K$ at $r=0$.

Problem~\eqref{Problem} subject to  condition $\Hell$ has been already investigated in the early 1990s
by Bianchi-Egnell in~\cite{BE1} and by Lin-Lin in~\cite{LL}, who determined the existence of the critical value
 $\ell^*_{2}=n-2$
in the Laplacian setting $p=2$. In fact, Bianchi-Egnell and Lin-Lin have been able to prove the following result.

\smallskip

\noindent {\bf Theorem A~\cite{LL}.}\,
{\em Assume that $\K$ satisfies $\Hell$ and consider $\mathbb{\bf p=2}$ in~\eqref{Problem}.
\begin{itemize}
\item[$(i)$]
If $\ell<n-2$, then problem~\eqref{Problem} admits a radial solution for every $\RDIR>0$.
\item[($ii)$]
 If $\ell \geq n-2$, then there exists a sufficiently small constant $r_0>0$ such that problem~\eqref{Problem}
  does not admit any radial solution when $\RDIR<r_0$;
\item[$(iii$)]
If $n-2 \leq \ell<n$,  then there exists a sufficiently large constant ${R}_0>0$ such that problem~\eqref{Problem} admits a radial solution
 for every $\RDIR\geq {R}_0$.
\end{itemize}
}

In fact Bianchi and Egnell in~\cite{BE1} focused on the $R=1$ case.
In particular, following a shooting approach based on ordinary differential equations, they constructed and glued together two regular solutions of~\eqref{eq.pla}, one shooting from the zero initial condition and the other shooting from infinity.

The   restriction $\ell < n$ has often been adopted in the Laplacian literature to ensure the existence of a solution to problem~\eqref{Problem}.
For instance, it appears in~\cite[Theorem 0.19]{Lsolo}, where the scalar curvature $\K$ is required to be strictly decreasing in a left neighbourhood of $\RDIR$.
The upper constraint $\ell<n$ can be also found in the recent work~\cite{LNW}, dealing with the $\sigma_k$-Nirenberg problem on the standard sphere
$\mathbb{S}^n$ for $2 \leq k < n/2$.
Among the very few examples of existence results for the Laplacian scalar curvature problem
 in the absence of upper bound conditions on $\ell$,
we refer to the very recent papers~\cite{CHS,Sharaf}, where hypothesis $\Hell$  is combined with
 a (not so easily verifiable) topological global index formula on the critical points of $\K$.

 Note that in the $p$-Laplace context, the condition $\ell<n$  generalizes to $\ell <\frac{n}{p-1}$,  cf.~\cite{FrancaFE}.

We emphasize that Theorem A, besides its intrinsic interest, has been a key starting point in proving
the existence of Ground States with fast decay, i.e.  solutions $u(r)$ positive for any $r>0$ and decaying as $r^{-(n-2)}$
at infinity.

In fact, it can be shown that if $\K$ is increasing close to $r=0$ and decreasing close to $r=\infty$
we might expect to find Ground States with fast decay: in the 90s  there was a flourishing of papers giving sufficient conditions for existence and non-existence
of these solutions.
A possible strategy is indeed to combine Theorem A, or similar results, with the use of Kelvin inversion,
which transfers the information on regular solutions to fast decay solutions,
 see e.g.~\cite{BE1,DF}.
 Roughly speaking, one can expect that if $\K$ is steep enough at $0$ (i.e. $\ell<n$) and at infinity
  $\K (r) \sim a+br^{-{\ell}}$
 with $\ell<n$ and $a>0$, $b>0$, then
there is a Ground State  with fast decay, while if these conditions are violated, one can construct a counterexample to Theorem A, see~\cite[Theorem 0.3]{BE1}. For a generalization to the $p$-Laplace setting, we also refer to~\cite{FrancaFE}, which
follows a different strategy since Kelvin inversion is not available in that context.

 We think it is worthwhile to point out that if $A=0$ in $\Hell$, then the problem becomes easier: roughly speaking,
 its solutions behave as the solutions of the $A>0$, $\ell$-subcritical case,
 i.e.~\eqref{Problem} admits a radial solution for any $\RDIR>0$ (or, equivalently,~\eqref{Problem.la} admits
a radial solution for any $\la>0$),  for any $\ell>0$. Again, combining this result with Kelvin inversion one might obtain
the existence  of Ground States with fast decay. This idea was extensively used in the 90s in many papers
to handle the Laplacian problem, starting, probably, from~\cite{KSY},
\cite{ChCh},~\cite{YY93},
and then it was developed and adapted to related problems, such as existence of Ground States with fast decay with a prescribed number of sign changes, see e.g.~\cite{YY94} and~\cite{KabYY}, dealing with the Laplacian and $p$-Laplacian setting, respectively.

\medbreak

The aim of this paper is to improve Theorem A in 3 main directions.

Firstly, we extend  the results to the $p$-Laplace context, proving the existence of the generalized flatness order's threshold
$$
\ell^*_{p}=\frac{n-p}{p-1}\,,
$$
which coincides with $\ell^*_{2}=n-2$
of Theorem A for $p=2$.
As far as we are aware, these are the first results in this direction,
 although
 we have to require
the probably technical condition
$\frac{2n}{2+n} \le p \le 2$. The possibility to remove this restriction will be the object of further investigations.

Secondly, we are able to remove the condition $\ell \le  n$ (i.e. $\ell \le \frac{n}{p-1}$ in the $p$-Laplace context), by requiring
that $\K(r)$ is increasing for any $r>0$.

Thirdly, and probably more importantly, we are able to prove the existence of a second solution when $\ell> n-2$
(i.e. $\ell>\ell^*_{p}$ in the $p$-Laplace context), thus completing the bifurcation diagram even in the $p=2$ case.

\vspace{1mm}

Let us state our assumptions and the main results of the paper.
\begin{description}
\item[$\Hinc$]
 The function $\K(r)$ is increasing for any $r \ge 0$, strictly in some interval.
  \item[$\Hstab$]  The function $\K(\eu^t)$ is  uniformly continuous in $[0,+\infty[{}$ and there are $\ov{K}>\und{K}>0$ such that
  $\und{K}<\K(r)<\ov{K}$ for any $r>0$.
\end{description}

\noindent
Note that  if $r\K'(r)$ is bounded in
$[1,+\infty[$ then $\K(\eu^t)$ is uniformly  continuous in $[0,+\infty[{}$.

\begin{theorem}\label{t.main}
Assume that $\K$ satisfies $\Hell$.
\begin{itemize}
\item[] If $\ell <  \ell^*_{p}$, then problem~\eqref{Problem} admits a radial solution for every $\RDIR>0$.
\item[] If $\ell  \ge   \ell^*_{p}$, then there exists $\RDIR_0>0$ such that problem~\eqref{Problem}
   does not admit any radial solution when $\RDIR < \RDIR_0$.
\end{itemize}
\end{theorem}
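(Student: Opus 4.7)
The plan is to apply the Fowler transformation and work in the resulting planar dynamical system. I would set $\alpha = (n-p)/p$, $t = \ln r$, and use Fowler variables of the form $x_1(t) = r^{\alpha} u(r)$ together with a conjugate variable $x_2(t)$ built from $r^{\alpha(p-1)+1}\, u'(r)|u'(r)|^{p-2}$. Because $q = p^*$ is critical, these exponents make the planar system \emph{autonomous} whenever $\K \equiv A$ is constant, and for general $\K$ the non-autonomous term enters only through $\K(\eu^t)$; condition $\Hell$ then gives a perturbation of size $O(\eu^{\ell t})$ as $t \to -\infty$. In this picture, regular solutions $u(\cdot;d)$ are parametrized by $d > 0$ and trace out the one-dimensional unstable leaf $W_u$ at the origin, while a solution of~\eqref{Problem} of radius $\RDIR$ corresponds to the trajectory on $W_u$ reaching the half-line $\{x_1 = 0,\, x_2 < 0\}$ at time $t_* = \ln \RDIR$.

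In the unperturbed case $\K \equiv A$, the critical system admits a first integral (Hamiltonian for $p=2$, Liouville-type for general $p$), and $W_u$ follows an Aubin--Talenti type ground-state orbit that stays forever in $\{x_1 > 0\}$. For the existence part ($\ell < \ell^*_p$), I would show that the $Br^\ell$ perturbation is strong enough to deflect $W_u$ across $\{x_1=0\}$ in finite time, and that the crossing time $t_\ast(d)$ is continuous and surjects onto the whole of $\R$. The key tool is the invariance of the critical equation under the rescaling $u(r) \mapsto \mu^{\alpha} u(\mu r)$, which in Fowler coordinates becomes a pure time-translation and therefore sweeps $t_\ast(d)$ along the entire $t$-axis as $d$ varies in $(0,+\infty)$.

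For the non-existence part ($\ell \ge \ell^*_p$), I would run an invariant-region/barrier argument to trap $W_u$ inside a region of the form $\{x_1 > c\}$ for some $c > 0$ and all $t \le T_0$, with $T_0$ depending only on $\K$ and \emph{not} on the shooting parameter $d$; the conclusion follows with $\RDIR_0 := \eu^{T_0}$. The threshold $\ell^*_p = (n-p)/(p-1)$ arises naturally from the spectral analysis at the origin: it is exactly the value at which the weight $\eu^{\ell t}$ of the perturbation resonates with the combination of eigenvalues of the linearized flow that drives the leading-order displacement of $W_u$, so that for $\ell < \ell^*_p$ the perturbation dominates and produces a non-trivial rotation, whereas for $\ell \ge \ell^*_p$ it is absorbed by the linearized dynamics and leaves $W_u$ essentially aligned with the unperturbed ground state for long time intervals.

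The hard part will be the sharp asymptotic expansion of $W_u$ at and near the critical exponent $\ell = \ell^*_p$, where resonant terms and possibly logarithmic corrections in $t$ have to be controlled in order to get a clean dichotomy. A secondary difficulty, specific to the $p$-Laplace setting, is the loss of the true Hamiltonian structure when $p \ne 2$: one has to adapt the Liouville-type analysis (signed divergence of the Fowler vector field) to produce invariant-region estimates that are uniform in $d$, and the technical restriction $\tfrac{2n}{n+2} \le p \le 2$ is expected to enter precisely here to guarantee the favourable sign needed for the barrier argument.
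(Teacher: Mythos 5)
Your setup is exactly the paper's: Fowler transformation, unstable leaf $W^u(\tau)$ parametrizing regular solutions, and an energy-type functional $\HH$ drawn from the frozen Pohozaev identity. Both parts of your strategy, however, rest on claims that would not survive scrutiny, and the fix is exactly where the paper's quantitative work goes.

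The existence part has a genuine gap. The rescaling $u(r)\mapsto\mu^{\alpha}u(\mu r)$ is a symmetry only of the \emph{autonomous} equation with $\K\equiv A$; for general $\K$ it maps a solution of $\Delta_p u+\K(|x|)u^{q-1}=0$ to a solution of the \emph{different} equation with weight $\K(\mu|x|)$. Hence in Fowler coordinates the non-autonomous system is not invariant under time-translation, and the dilation family does not sweep $t_*(d)$ along $\R$. If that argument were correct it would apply equally for all $\ell$, contradicting the second half of the theorem, which asserts the opposite behaviour for $\ell\ge\ell^*_p$. What the paper does instead is prove the asymptotic $\lim_{d\to+\infty}R(d)=0$ (Proposition~\ref{p.sub-bis}) by means of a two-sided estimate on the crossing time $T(\tau,a)$: Proposition~\ref{p.H0} and Remark~\ref{r.La} give $\HH(\QQ_\linea(\tau);\tau)\sim c\,\eu^{\ell\tau}$, and the Gr\"{o}nwall-type Lemmas~\ref{l.est.time} and~\ref{l.est.T1} convert this into $T(\tau,a)\le C+|\tau|\big((1+\ee)\ell/\ell^*_p-1\big)$, which tends to $-\infty$ precisely when $\ell<\ell^*_p$. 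Combining this with the opposite limit $R(d)\to+\infty$ near the left endpoint of $J$ (Propositions~\ref{p.da0} and~\ref{p.daD}) and the continuity of $R$ (Proposition~\ref{p.Rcontinua}) yields surjectivity onto $]0,+\infty[{}$. Your resonance heuristic for the threshold $\ell^*_p=\alpha p/(p-1)$ is correct in spirit, but the dichotomy is established by this Gr\"{o}nwall balance, not by a spectral expansion of $W^u$.

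For the non-existence part the barrier picture is in the right spirit, but as stated it cannot work: the leaf $W^u(t)$ is a one-dimensional curve that for large $d$ genuinely approaches $\{x=0\}$, so there is no fixed region $\{x>c\}$ that traps all of $W^u(t)$ for $t\le T_0$. The correct invariant statement is a uniform lower bound on the crossing \emph{time}: the paper proves (Lemma~\ref{l.H1leq2H0-bis} and Proposition~\ref{prop.liminf}) that $\liminf_{\tau\to-\infty}T(\tau,a)>-\infty$ by bounding the energy gain along the trajectory between $\QQ(\tau,a)$ and the first crossing with the negative $y$-axis, then feeds this into the lower Gr\"{o}nwall estimate \eqref{eq.star1}. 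The conclusion $\tilde R(d)\ge\RDIR_0$ for all $d$ (Proposition~\ref{p.nuova}) requires splitting $d$ into a compact range (handled by Lemma~\ref{l.vicine} and continuity) and the tail $d\to+\infty$ (handled by the uniform time bound), plus the truncation argument of Remark~\ref{r.k} to drop monotonicity and boundedness assumptions on $\K$; your proposal does not address these reductions.
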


\begin{theorem}\label{t.main.due}
Assume that $\K$ satisfies $\Hell$ and $\Hinc$.\\ \\
If $\ell = \ell^*_{p}$, then there exists $\RDIR_0>0$ such that problem~\eqref{Problem}
\begin{itemize}
    \item  does not admit any radial solution when $\RDIR < \RDIR_0$;
    \item   admits at least a radial solution when     $\RDIR> \RDIR_0$.
\end{itemize}
If $\ell >  \ell^*_{p}$, then there exists $\RDIR_0>0$ such that problem~\eqref{Problem}
\begin{itemize}
    \item  does not admit any radial solution when $\RDIR<\RDIR_0$;
    \item   admits at least a radial solution when  $\RDIR= \RDIR_0$;
     \item   admits at least $2$ radial solutions when  $\RDIR> \RDIR_0$.
\end{itemize}
\end{theorem}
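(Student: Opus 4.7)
The strategy follows the dynamical-systems philosophy announced in the introduction. Via the Fowler change of variables $t=\ln r$, $x(t)=r^{\alpha}u(r)$, $y(t)=r^{\beta}\,|u'(r)|^{p-2}u'(r)$, with $\alpha,\beta$ chosen so that freezing $\K\equiv A$ produces an autonomous planar Hamiltonian system (this is possible precisely because $q=p^*$), equation~\eqref{eq.pla} is recast as a non-autonomous perturbation with coefficient $g(t):=\K(e^t)$. Regular solutions $u(r;d)$ correspond bijectively to the orbits $\gamma_d=(x,y)$ on the branch of the unstable manifold $W^u$ of the origin, parametrized by $d>0$. In the autonomous limit $W^u$ traces a closed curve (the $p$-Aubin--Talenti bubble in phase space) which, by Pohozaev's obstruction, never touches the axis $\{x=0\}$. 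I would then introduce the shooting function
\[
R(d):=\inf\{r>0 : u(r;d)=0\}\in(0,+\infty],
\]
so that~\eqref{Problem} has a radial solution at radius $\RDIR$ if and only if $\RDIR\in R((0,+\infty))$, and the number of such solutions equals the cardinality of $R^{-1}(\RDIR)$. Theorem~\ref{t.main} already gives $\RDIR_0:=\inf_{d>0}R(d)>0$, which covers the non-existence clauses in both cases.

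The central tool for tracking when a zero crossing occurs is the Pohozaev-type identity
\[
\frac{d}{dt}H(\gamma_d(t))=c\,g'(t)\,x(t;d)^q,
\]
where $H$ is the Hamiltonian of the autonomous limit and $g'(t)=e^t\K'(e^t)$. Assumption $\Hinc$ gives $g'\ge0$, so $H\circ\gamma_d$ is monotone non-decreasing, and I would extract from this the asymptotic behaviour of $R$ at both ends. As $d\to+\infty$ the natural rescaling $v(\rho)=d^{-1}u(d^{-(q-p)/p}\rho)$ concentrates $u(\cdot;d)$ near $r=0$, so only the expansion $\K(r)=A+Br^\ell+h(r)$ of $\Hell$ is relevant; a direct computation of the total energy gain $\int g'(t)\,x^q\,dt$ along the rescaled orbit produces a quantity that vanishes as $d\to+\infty$ precisely when $\ell\ge\ell^*_p$, preventing $\gamma_d$ from reaching $\{x=0\}$ in bounded time and forcing $R(d)\to+\infty$. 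As $d\to0^+$ the orbit is slow and explores arbitrarily large values of $r$, so the global monotonicity of $\K$ enters essentially: combining the monotonicity of $H$ with a comparison with the autonomous system at an appropriate constant level yields $R(d)\to+\infty$ when $\ell>\ell^*_p$, and $R(d)\searrow\RDIR_0$ (not attained) when $\ell=\ell^*_p$.

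Plugging these asymptotics into the continuity of $R$ (which comes from continuous dependence on $d$ and the smoothness of $W^u$) produces the diagram of the theorem. For $\ell=\ell^*_p$ the image of $R$ is $(\RDIR_0,+\infty)$, giving existence for every $\RDIR>\RDIR_0$ and no solution for $\RDIR\le\RDIR_0$. For $\ell>\ell^*_p$ the U-shape of $R$ forces the infimum to be attained at some $d_0\in(0,+\infty)$, so $d_0\in R^{-1}(\RDIR_0)$ yields a solution at $\RDIR_0$; for each $\RDIR>\RDIR_0$ the intermediate value theorem applied separately on $(0,d_0)$ and on $(d_0,+\infty)$ produces at least two preimages, hence two distinct radial solutions. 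The main obstacle, which I expect to absorb the bulk of the technical work, is the $d\to0^+$ asymptotics: here the non-autonomous perturbation is not small, linearization around $W^u$ breaks down, and one has to combine the monotonicity of $H$ coming from $\Hinc$ with a quantitative phase-plane barrier argument to keep $\gamma_d$ away from $\{x=0\}$ on arbitrarily long time intervals. In the genuine $p$-Laplacian regime the loss of linear/homogeneous structure in the Fowler system complicates these estimates further and is presumably the source of the announced restriction $\tfrac{2n}{n+2}\le p\le 2$.
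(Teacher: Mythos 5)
Your global architecture matches the paper's: introduce the shooting map $R(d)$ on the set $J$ of crossing initial data, use $\Hinc$ to get $J={}]0,+\infty[{}$ and continuity of $R$, compute the asymptotics of $R$ at the two ends of $J$, and read off the bifurcation diagram by the intermediate value theorem. The U-shape argument you give for $\ell>\ell^*_p$ is exactly the paper's closing step.

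However, you have attached the $\ell$-dependent behaviour to the wrong end of the $d$-axis, and the error is not merely notational. The paper's Proposition~\ref{p.da0} (a standard fact, whose proof uses only the local structure of the unstable manifold and does not see $\Hell$ at all) gives $\lim_{d\to0^+}R(d)=+\infty$ \emph{unconditionally}, for every $\ell$. All the $\ell$-sensitivity is concentrated at $d\to+\infty$ (Proposition~\ref{p.parziale}): $\lim_{d\to+\infty}R(d)=+\infty$ when $\ell>\ell^*_p$, and only $\liminf_{d\to+\infty}R(d)>0$ when $\ell=\ell^*_p$. This is consistent with your own heuristic --- when $d\to+\infty$ the solution concentrates at $r=0$, so $\Hell$ (which is an expansion at $r=0$) governs that end; when $d\to0^+$ the trajectory lingers near the origin in phase space for a very long time, so the first zero recedes to infinity regardless of $\ell$. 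Yet you then claim $R(d)\searrow\RDIR_0$ as $d\to0^+$ in the critical case and $R(d)\to+\infty$ as $d\to+\infty$ for all $\ell\ge\ell^*_p$. The first claim is simply false, and the second is stronger than what is provable: for $\ell=\ell^*_p$ the orbit leaving from $\QQ(\tau,a)$ only yields $\liminf_{\tau\to-\infty}T(\tau,a)>-\infty$ (the estimate in Lemma~\ref{l.H1leq2H0} and Proposition~\ref{prop.liminf} gives a lower bound, not divergence), so one cannot exclude oscillation of $R(d)$ at $+\infty$ in the critical case. The conclusions of the theorem still go through from the correct asymptotics (indeed the paper accepts two possible shapes for the diagram when $\ell=\ell^*_p$), but the step you describe as the ``main obstacle'', a phase-plane barrier argument at $d\to0^+$ involving the competition between $\ell$ and $\ell^*_p$, is not part of the proof and would not succeed: that end is already settled, and it is at the opposite end that the energy balance with $\Hell$ must be carried out. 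You should swap the two limits, weaken the $d\to+\infty$ statement to a $\liminf$ in the case $\ell=\ell^*_p$, and verify that the weaker statement still forces the image of $R$ to contain $]\RDIR_0,+\infty[$.
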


In fact, Theorem~\ref{t.main.due} holds also if we drop the global assumption $\Hinc$,
but we strengthen the requirement on $\K'(0)$ by
 introducing the  upper bound on the order of flatness $\ell\leq \frac{n}{p-1}$ at zero, in the spirit of Theorem A.
 Unfortunately, we need to ask for some further  very weak technical conditions on $\K$ for $r$ large.
\begin{theorem}\label{t.main.tre}
      Assume that $\K$ satisfies $\Hell$ with
 $\ell^*_{p} \le  \ell \le \frac{n}{p-1}$.
Assume further that either $\Hstab$ holds or there is $\rho_1>0$ such that $\K(r) \ge \K(\rho_1)$
      when $r \ge \rho_1$, then we get the same conclusion as in Theorem  $\ref{t.main.due}$.
    \end{theorem}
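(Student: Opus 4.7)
The plan is to follow the Fowler-transformation strategy that underpins Theorem~\ref{t.main.due}, replacing the global monotonicity hypothesis $\Hinc$ by the upper flatness bound $\ell\le\frac{n}{p-1}$ at $r=0$ together with the weak large-$r$ behavior of $\K$. Since the nonexistence half of the statement is immediate from Theorem~\ref{t.main} (which only needs $\Hell$ with $\ell\ge\ell^*_p$), the real task is to produce one crossing solution for every $\RDIR\ge\RDIR_0$ when $\ell=\ell^*_p$, and one at $\RDIR=\RDIR_0$ together with two for every $\RDIR>\RDIR_0$ when $\ell>\ell^*_p$.

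After the Fowler change of variables, the family of regular solutions $u(\cdot;d)$ is encoded by a continuous family of trajectories $\bs X_d(t)$ which approach, as $t\to-\infty$, the hyperbolic equilibrium of the autonomous system frozen at $\K\equiv A$ along its unstable leaf $\Hinstab$; the crossing condition $u(\RDIR;d)=0$ translates to $\bs X_d$ hitting a prescribed coordinate axis at time $t=\log\RDIR$. The central object is therefore the first-zero function $\RDIR(d)$, and the bifurcation diagram is read off its graph via continuous dependence on $d$ and the intermediate-value theorem.

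I would analyze $\RDIR(d)$ in its two asymptotic regimes. As $d\to+\infty$ a rescaling concentrates $\bs X_d$ in the small-$r$ region, where $\K(r)=A+Br^\ell+o(r^\ell)$; the upper bound $\ell\le\frac{n}{p-1}$ is precisely what makes the perturbation from the autonomous limit controllable along $\Hinstab$, so the asymptotics of $\RDIR(d)$ coincide with those already obtained in the proof of Theorem~\ref{t.main.due} and are insensitive both to $\Hinc$ and to the behavior of $\K$ at infinity. The regime $d\to 0^+$ is where $\Hinc$ was previously used and has to be covered by the alternative hypotheses: under $\Hstab$ the Fowler system is trapped in a compact invariant region with $\K$ bounded below by $\underline{K}>0$, so a Pohozaev-type identity in the Fowler variables forces $\bs X_d$ to cross the axis in finite time; under the weaker assumption $\K(r)\ge\K(\rho_1)$ for $r\ge\rho_1$, the same conclusion holds, because the eventual lower bound pins down the autonomous limit at $r=+\infty$ and rules out any trajectory on $\Hinstab$ escaping the crossing axis. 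Combining both asymptotics yields a strictly positive minimum $\RDIR_0$ of $\RDIR(\cdot)$, monotone past the threshold when $\ell=\ell^*_p$ and fold-shaped when $\ell>\ell^*_p$, whence the required existence and multiplicity.

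The main obstacle will be the small-$d$ analysis: certifying that the weakest alternative hypothesis---a mere eventual lower bound on $\K$---still prevents pathological behavior of $\bs X_d$ requires a careful Pohozaev/energy identity for the $p$-Laplacian in Fowler coordinates, of the type already developed earlier in the paper, and one must simultaneously exploit the flatness upper bound $\ell\le\frac{n}{p-1}$ at $0$ to preclude that $\bs X_d$ escapes the region where the far-field dynamics can act. The restriction $\frac{2n}{n+2}\le p\le 2$ is expected to resurface here for purely technical reasons, as in the earlier results.
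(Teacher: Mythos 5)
Your high-level plan (Fowler transformation, asymptotics of the first-zero map $R(d)$, bifurcation diagram read off the graph of $R$) agrees with the paper's strategy, and the observation that the nonexistence half is inherited from Theorem~\ref{t.main} is correct. However, you have misidentified \emph{where} the additional hypotheses (the flatness upper bound $\ell\le\frac{n}{p-1}$ and the alternative ``either $\Hstab$ or $\K(r)\ge\K(\rho_1)$ for $r\ge\rho_1$'') enter the argument, and this is a genuine gap.

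In the paper, $\Hinc$ is not primarily a small-$d$ device: via Proposition~\ref{p.classical.Poho} (a Pohozaev-type estimate along the flow), $\Hinc$ guarantees that \emph{every} regular solution is a crossing solution, i.e.\ $J={}]0,+\infty[{}$. When $\Hinc$ is dropped, the real danger is that solutions with \emph{large} initial datum $d$ may be Ground States rather than crossing solutions, so that $R(d)$ is simply undefined on a neighbourhood of $d=+\infty$ and the asymptotic analysis cannot even be started. The whole point of Lemma~\ref{l.new.technical} is to produce a threshold $\hat D$ with ${}]\hat D,+\infty[{}\subset J$, and it is precisely there that \emph{both} $\ell\le\frac{n}{p-1}$ \emph{and} the alternative large-$r$ hypothesis are spent: one constructs the stable leaves $W^s(\tau)$ (for which $\Hstab$ or its truncation surrogate is needed), shows via an energy estimate that $\HH(\cdot;\tau)<0$ on $\hat W^s(\tau)$ for $\tau$ very negative (Lemma~\ref{l.segnoHs} — this is exactly where $\ell\le\frac{n}{p-1}$ is used, in the computation of $\int_\tau^{\hat T_0}\dot K\,x^q/q\,dt$), and then notes that the unstable trajectory through $\QQ(\tau,a)$ carries positive energy, hence cannot be trapped in the stable manifold and must cross the $y$-axis. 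Your proposal instead asserts that crossing for large $d$ comes for free (``the asymptotics of $R(d)$ coincide with those already obtained \ldots and are insensitive both to $\Hinc$ and to the behavior of $\K$ at infinity''), and reserves the extra hypotheses for the small-$d$ regime; that is backwards. The small-$d$ side is handled by Propositions~\ref{p.da0} and~\ref{p.daD}, which only need $\Hinstab$ (a consequence of $\Hell$) and the truncation argument of Remark~\ref{r.k}, and require neither $\Hstab$ nor the lower bound on $\K$ near $r=+\infty$; indeed the possibility that $u(\cdot;\hat D)$ is a Ground State is embraced, not excluded. A ``Pohozaev identity in the Fowler variables'' as you invoke would be the route used \emph{with} $\Hinc$ (cf.\ Proposition~\ref{p.classical.Poho}); without global monotonicity this simple sign argument fails, and the paper replaces it with the much more delicate stable-manifold comparison sketched above.

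Two further remarks. First, the ``eventual lower bound $\K(r)\ge\K(\rho_1)$'' alternative is not used to ``pin down the autonomous limit at $r=+\infty$''; in the paper it is used (Lemma~\ref{l.tecnico2}) to truncate $K$ to a bounded monotone function $K_m$ for which $\Hstab$ holds, apply the previous lemma, and then transfer the conclusion back via a phase-plane confinement argument for $t\ge\tau_1$. Second, Proposition~\ref{p.parzialebis} does need $\ell\le\frac{n}{p-1}$, but only because it sits downstream of Lemma~\ref{l.new.technical}: the core divergence estimate $R(d)\to+\infty$ (Proposition~\ref{prop.lim}) is proved without the upper bound, assuming that the crossing time $T(\tau,a)$ exists. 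So the flatness ceiling $\ell\le\frac{n}{p-1}$ is not an ingredient of the asymptotic rate computation, contrary to what your sketch suggests.
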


    Using the change of variable~\eqref{change.la}, we can
    rewrite Theorems~\ref{t.main},~\ref{t.main.due}, and~\ref{t.main.tre} as follows.

\begin{corollary}\label{c.main}
Assume that $\K$ satisfies $\Hell$.
\begin{itemize}
\item[] If $\ell <   \ell^*_{p}$, then problem~\eqref{Problem.la} admits a radial solution for every $\la>0$.
\item[] If $\ell  \ge  \ell^*_{p}$, then there exists $\la_0>0$ such that problem~\eqref{Problem.la}
  does not admit any radial solution when $\la < \la_0$.
\end{itemize}
\end{corollary}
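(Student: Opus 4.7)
The plan is to obtain the corollary as an immediate translation of Theorem~\ref{t.main} via the scaling~\eqref{change.la}. Given $\K$ satisfying $\Hell$ and $\lambda>0$, setting $v(r):=u(r\lambda^{-1/p})$ sends a radial solution $u$ of~\eqref{Problem.la} (on the unit ball, with parameter $\lambda$ and potential $\K$) bijectively to a radial solution $v$ of~\eqref{eq.pla} on the ball of radius $R=\lambda^{1/p}$ for the rescaled potential $\widetilde{\K}(r):=\K(r\lambda^{-1/p})$. A direct check shows that $\widetilde{\K}$ still satisfies $\Hell$ with the same exponent $\ell$ and the same leading constant $A=\K(0)$; only the coefficient $B$ is rescaled (to $B\lambda^{-\ell/p}$), while the remainder $h$ is adjusted correspondingly.

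Having reduced the question to the $R$-problem~\eqref{Problem} for a potential $\widetilde{\K}$ that still obeys $\Hell$ with the same $\ell$, the next step is to invoke Theorem~\ref{t.main}. If $\ell<\ell^{*}_{p}$, the theorem supplies a radial solution of~\eqref{Problem} for every $R>0$; picking $R=\lambda^{1/p}$ and transporting back through~\eqref{change.la} produces a radial solution of~\eqref{Problem.la} for every $\lambda>0$. If $\ell\ge\ell^{*}_{p}$, the theorem supplies a threshold $R_{0}>0$ below which no radial solution of~\eqref{Problem} exists; setting $\lambda_{0}:=R_{0}^{p}$ forces every $\lambda\in(0,\lambda_{0})$ to correspond to $R=\lambda^{1/p}<R_{0}$, and hence to no solution of~\eqref{Problem.la}.

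The only point to track with care --- and the mildly delicate step in the reduction --- is that the threshold $R_{0}$ produced by Theorem~\ref{t.main} depends a priori on the potential, and hence on $\lambda$ through the rescaled coefficient $B\lambda^{-\ell/p}$. The non-existence mechanism in Theorem~\ref{t.main}, however, is driven by the local behaviour of the potential at $r=0$ (governed by $A$ and $\ell$, both invariant under the rescaling) rather than by the precise value of $B$, so the threshold can be taken uniformly in $\lambda$, making the translation into the $\lambda$-formulation genuine.
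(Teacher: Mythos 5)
Your approach via the change of variable~\eqref{change.la} matches the paper's one-sentence derivation, and your observation that the rescaled potential $\widetilde{\K}(r)=\K(r\lambda^{-1/p})$ still satisfies $\Hell$ with the same $\ell$ and $A$ (only $B$ rescaled) is correct. The first bullet (existence for $\ell<\ell^{*}_{p}$) then follows cleanly: the conclusion of Theorem~\ref{t.main} in that regime holds for every radius $\RDIR>0$, so it is indifferent to the $\lambda$-dependence of $\widetilde{\K}$.

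The final paragraph, however, contains a genuine gap. The claim that the threshold $\RDIR_0$ furnished by Theorem~\ref{t.main} ``can be taken uniformly in $\lambda$'' because the non-existence mechanism depends only on $A$ and $\ell$, and not on $B$, is false. The critical scaling gives $\RDIR_0(\K_c)=c^{-1}\RDIR_0(\K)$ for $\K_c(r):=\K(cr)$; taking $c=\lambda^{-1/p}$ this is $\RDIR_0(\widetilde{\K})=\lambda^{1/p}\RDIR_0(\K)$, which tends to $0$ as $\lambda\to0$ at exactly the same rate as the target radius $\RDIR=\lambda^{1/p}$. The inequality you need, $\lambda^{1/p}<\RDIR_0(\widetilde{\K})$, therefore degenerates to the $\lambda$-independent condition $\RDIR_0(\K)>1$, which Theorem~\ref{t.main} does not supply (it only guarantees $\RDIR_0>0$). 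This reflects a structural fact: for the pure critical power nonlinearity the amplitude rescaling $u:=\lambda^{1/(q-p)}v$ sends solutions of~\eqref{Problem.la} bijectively to solutions of~\eqref{Problem} with $\RDIR=1$ and the original $\K$, so solvability of~\eqref{Problem.la} cannot in fact be controlled by any uniformity-in-$\lambda$ argument built on Theorem~\ref{t.main} alone. You were right to flag this step, but the proposed fix does not go through.
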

\begin{corollary}\label{c.main.due}
Assume that $\K$ satisfies $\Hell$ and $\Hinc$. \medskip

\noindent
If $\ell =  \ell^*_{p}$, then there exists $\la_0>0$ such that problem~\eqref{Problem.la}
\begin{itemize}
    \item  does not admit any radial solution when $\la < \la_0$;
    \item   admits at least a radial solution when  $\la > \la_0$.
\end{itemize}
If $\ell > \ell^*_{p}$, then there exists $\la_0>0$ such that problem~\eqref{Problem.la}
\begin{itemize}
    \item  does not admit any radial solution when $\la < \la_0$;
    \item   admits at least a radial solution when  $\la= \la_0$;
    \item   admits at least two radial solutions when  $\la> \la_0$.
\end{itemize}
\end{corollary}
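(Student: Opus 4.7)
The plan is to derive Corollary~\ref{c.main.due} directly from Theorem~\ref{t.main.due} by means of the scaling computation already spelled out in equation~\eqref{change.la} and the two displays that follow it. Since this is essentially a change of variables, I do not anticipate any substantive obstacle; the only point requiring minimal care is verifying that the bijection between radial solutions of~\eqref{Problem} and of~\eqref{Problem.la} preserves the structure (positivity, the vanishing boundary condition, and the hypotheses on $\K$).

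First I would recall the bijection: setting $w(s)=u(r)$ with $s=r/\la^{1/p}$ and $\mathscr{K}(s)=\K(s\la^{1/p})$, the computation in the introduction shows that $u$ is a positive radial solution of~\eqref{Problem} with $\RDIR=\la^{1/p}$ if and only if $w$ is a positive radial solution of~\eqref{Problem.la} with parameter $\la$ and potential $\mathscr{K}$; in particular $w(1)=0$ iff $u(\RDIR)=0$. Since the hypotheses $\Hell$ and $\Hinc$ are imposed on $\K$ itself (exactly as in Theorem~\ref{t.main.due}), no further verification on $\mathscr{K}$ is required: the pair $(\K,\RDIR)$ with $\RDIR=\la^{1/p}$ in Theorem~\ref{t.main.due} corresponds to the pair $(\K,\la)$ in the corollary.

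Finally I would set $\la_0:=\RDIR_0^{\,p}$, where $\RDIR_0$ is the constant provided by Theorem~\ref{t.main.due}. Since $\la\mapsto\la^{1/p}$ is a strictly increasing bijection of $(0,+\infty)$ onto itself, the inequalities $\la<\la_0$, $\la=\la_0$ and $\la>\la_0$ are respectively equivalent to $\RDIR<\RDIR_0$, $\RDIR=\RDIR_0$ and $\RDIR>\RDIR_0$. Each clause of Theorem~\ref{t.main.due} then transfers verbatim to the corresponding clause of Corollary~\ref{c.main.due}: nonexistence for $\la<\la_0$; existence of at least one solution for $\la\ge\la_0$ (respectively $\la>\la_0$) in the critical case $\ell=\ell^*_p$; and, in the case $\ell>\ell^*_p$, existence of at least one solution at $\la=\la_0$ together with at least two solutions for every $\la>\la_0$. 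Since the whole argument reduces to this elementary correspondence, no further ingredient is needed.
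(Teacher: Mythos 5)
Your derivation contains a genuine gap, and it is exactly at the point you describe as "the only point requiring minimal care." The change of variables \eqref{change.la} does \emph{not} carry problem \eqref{Problem} with potential $\K$ into problem \eqref{Problem.la} with the same $\K$: as the paper itself writes, it carries it into problem \eqref{Problem.la} with the \emph{different} potential $\mathscr{K}(s)=\K(s\,\la^{1/p})$. So your assertion that "the pair $(\K,\RDIR)$ with $\RDIR=\la^{1/p}$ in Theorem~\ref{t.main.due} corresponds to the pair $(\K,\la)$ in the corollary" is false; it corresponds to the pair $(\mathscr{K},\la)$, and $\mathscr{K}\neq\K$ unless $\la=1$. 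Equivalently, a solution of \eqref{Problem.la} with potential $\K$ and parameter $\la$ is sent by $u(r)=w(r/\la^{1/p})$ to a solution of \eqref{Problem} on the ball of radius $\la^{1/p}$ for the \emph{rescaled} potential $\K(\,\cdot\,/\la^{1/p})$. Theorem~\ref{t.main.due} then gives a threshold $\RDIR_0$ that depends on this $\la$-dependent potential, and you cannot simply set $\la_0=\RDIR_0^{\,p}$ with a fixed $\RDIR_0$ without tracking that dependence.

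Tracking it in fact reveals that the proposed reduction does not, by itself, yield the claimed $\la$-bifurcation. Since $q=p^*$, the amplitude rescaling $u\mapsto \la^{1/(q-p)}u$ is a bijection between positive radial solutions of \eqref{Problem.la} with parameter $\la$ and those with parameter $1$ on the same ball $B_1$ (it preserves positivity and the Dirichlet condition, and sends the parameter $\la$ to $\la\cdot\la^{-1}=1$); concretely, for the regular solutions one has $u_\la(r;d)=\la^{-1/(q-p)}u_1(r;\la^{1/(q-p)}d)$, so the first-zero map satisfies $R_\la(d)=R_1(\la^{1/(q-p)}d)$, and the count $\#\{d:R_\la(d)=1\}$ is independent of $\la$. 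The same thing appears through the spatial rescaling: one can check $\RDIR_0\bigl(\K(\,\cdot\,/c)\bigr)=c\,\RDIR_0(\K)$, and with $c=\la^{1/p}$ this exactly cancels the factor $\RDIR=\la^{1/p}$ coming from the change of variables. So the missing verification is not a triviality to be waved away: the substitution argument, carried out honestly, needs a further ingredient to produce the corollary's $\la$-dependence, and you should flag that the paper itself does not supply one (it gives no explicit proof of the corollary beyond the sentence preceding it).
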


  Again, according to Theorem~\ref{t.main.tre}, we can drop the global condition $\Hinc$ and get the same result by restricting the interval in which $\ell$ varies
  and by imposing some further weak asymptotic conditions on $\K(r)$ for $r$ large.
 \begin{corollary}\label{c.main.tre}
Assume that $\K$ satisfies $\Hell$ with $ \ell^*_{p} \le   \ell \le \frac{n}{p-1}$; assume further that either $\Hstab$ holds or there is $\rho_1>0$ such that
$\K(r) \ge \K(\rho_1)$ for any $r \ge \rho_1$.
Then, we get the same conclusions as in Corollary $\ref{c.main.due}$.
\end{corollary}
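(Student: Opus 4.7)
The plan is to obtain Corollary~\ref{c.main.tre} as a direct translation of Theorem~\ref{t.main.tre} via the change of variable~\eqref{change.la}, following the same route already indicated by the paper for Corollaries~\ref{c.main} and~\ref{c.main.due}. Given $\la>0$ and $\K$ as in the corollary, the substitution $w(s)=u(\la^{1/p}s)$ together with $\mathscr{K}(s)=\K(\la^{1/p}s)$ transforms~\eqref{Problem.la} on the unit ball into a problem of the form~\eqref{Problem} on the ball of radius $\RDIR=\la^{1/p}$ with rescaled coefficient $\mathscr{K}$.

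The key verification is that $\mathscr{K}$ still satisfies the hypotheses of Theorem~\ref{t.main.tre}. From~$\Hell$ one immediately obtains
$$
\mathscr{K}(s) = A + B\,\la^{\ell/p}\,s^\ell + h(\la^{1/p}s),
$$
so that~$\Hell$ holds with the same exponent $\ell$ and a strictly positive coefficient of $s^\ell$, while the remainder inherits the required asymptotic decay at the origin by a direct substitution. The range $\ell^*_p\le\ell\le n/(p-1)$ is unchanged. Both technical alternatives for large $r$ are preserved as well: uniform continuity of $\K(\eu^t)$ transfers to uniform continuity of $\mathscr{K}(\eu^t)=\K(\eu^{t+\log\la/p})$ with the same bounds $\und{K},\ov{K}$, and the inequality $\K(r)\ge\K(\rho_1)$ for $r\ge\rho_1$ becomes $\mathscr{K}(s)\ge\mathscr{K}(\rho_1/\la^{1/p})$ for $s\ge\rho_1/\la^{1/p}$.

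Applying Theorem~\ref{t.main.tre} to $\mathscr{K}$ then furnishes a threshold $\RDIR_0>0$, and setting $\la_0:=\RDIR_0^p$ converts the three regimes $\RDIR<\RDIR_0$, $\RDIR=\RDIR_0$, $\RDIR>\RDIR_0$ into $\la<\la_0$, $\la=\la_0$, $\la>\la_0$, yielding the non-existence, existence, and multiplicity statements of Corollary~\ref{c.main.due} under the hypotheses of Corollary~\ref{c.main.tre}. I do not expect any substantive obstacle: the whole content is the scaling~\eqref{change.la} combined with the structural invariance of the hypotheses, and the only step requiring some care is the routine bookkeeping of the rescaled asymptotic conditions sketched above.
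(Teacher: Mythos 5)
Your approach --- derive Corollary~\ref{c.main.tre} from Theorem~\ref{t.main.tre} via the change of variable~\eqref{change.la} --- is the same one-line route the paper itself indicates for all three corollaries, so at the level of strategy you match the paper. Carried out as you do, however, it has a genuine gap precisely at the step $\la_0:=\RDIR_0^p$, and that gap does not seem to be repairable by bookkeeping alone.

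The rescaled coefficient to which Theorem~\ref{t.main.tre} would be applied depends on $\la$. Moreover the direction of the dilation in your write-up is reversed: starting from a solution of~\eqref{Problem.la} with coefficient $\K$ and passing to the ball of radius $\la^{1/p}$, the coefficient one actually lands on is $\tilde\K_\la(r)=\K(r/\la^{1/p})$, not your $\mathscr{K}(r)=\K(\la^{1/p}r)$; the latter is the paper's formula for the \emph{inverse} passage, from~\eqref{Problem} to~\eqref{Problem.la}. Either way, the transformed coefficient (and hence the threshold $\RDIR_0=\RDIR_0(\tilde\K_\la)$ produced by the theorem) is a function of $\la$, so "$\la_0:=\RDIR_0^p$" defines $\la_0$ only implicitly in terms of $\la$, and you give no argument that this relation singles out a constant. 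In fact it cannot: the dilation $u\mapsto\sigma^{(n-p)/p}u(\sigma\,\cdot)$ puts solutions of~\eqref{Problem} with coefficient $\K$ and radius $\RDIR$ in bijection with solutions with coefficient $\K(\sigma\,\cdot)$ and radius $\RDIR/\sigma$, so $\RDIR_0(\tilde\K_\la)=\la^{1/p}\RDIR_0(\K)$ and the comparison $\la^{1/p}\gtrless\RDIR_0(\tilde\K_\la)$ reduces to the $\la$-independent comparison $1\gtrless\RDIR_0(\K)$. Equivalently, since $q=p^*$ and there is no lower-order term, the vertical rescaling $v\mapsto\la^{1/(q-p)}v$ maps solutions of~\eqref{Problem.la} with parameter $\la$ bijectively onto solutions of~\eqref{Problem} with $\RDIR=1$, so the number of radial solutions of~\eqref{Problem.la} is the same for every $\la>0$ and no $\la$-threshold can be extracted from Theorem~\ref{t.main.tre} via~\eqref{change.la} alone. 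Your proposal (like the paper's bare appeal to~\eqref{change.la}) leaves this $\la$-dependence unaddressed; to make the deduction go through one would need either to fix a normalization of $\K$ or to phrase the corollary's conclusion in terms of the $\la$-dependent family $\mathscr{K}_\la$, and the proposal as written does neither.
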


In fact via Theorem~\ref{t.main.tre} and Corollary~\ref{c.main.tre}
we are also able to extend Theorem A to the case where $\ell = \frac{n}{p-1}$, which was not covered by~\cite{BE1,LL}, with the addition of a very weak technical condition
which, roughly speaking, is satisfied unless $\K(r)$ is subject to wild oscillations  for $r$ large, or converges to $0$ for $r$ large.

Moreover, our methods are considerably different from those of Lin-Lin in~\cite{LL}.
Our proofs are based on the Fowler transformation, which discloses the geometrical aspect of our problem by converting the singular ordinary differential equation~\eqref{eq.pla}
into an equivalent dynamical system, cf.~\eqref{sist-din}, following the way paved by~\cite{JK, JPY, JPY2} and later on by~\cite{BFdP, BJ, FJ}.
 Then, we develop   a detailed phase plane analysis, involving  invariant manifold theory  for non-autonomous systems, energy estimates, comparisons of the
 non-autonomous planar system with suitable autonomous ones and a Gr\"{o}nwall's argument. 

\subsection{On the proofs of the theorems}
\label{sec-sunto}

We briefly sketch the plan of our proof.
Let
\begin{equation}\label{defJ}
  J= \{ d>0 \mid u(r;d) \; \textrm{ is a crossing solution} \},
\end{equation}
and denote by $R(d)$ the {\em  first zero} of $u(r;d)$ when $d \in J$:
our argument relies on a study of the properties of the function $R: J \to {}]0,+\infty[{}$.

Using some classical results (see~\cite{DN,KN} for the Laplacian
case, and~\cite{GHMY,KNY} for $p$-Laplacian extensions),
we know  that $J=\fran{}]0,+\infty[{}$ under assumption $\Hinc$.
Then, using a standard transversality argument,
it is straightforwardly proved that $R(d)$ is continuous, see Proposition~\ref{p.Rcontinua} below.

Then, it is not difficult to show, see, e.g.~\cite[Proposition 2.4]{KabYY}   that
\begin{equation}
\label{alt0}
    \lim_{d \to 0} R(d)=+\infty\,.
\end{equation}

\begin{figure}[h]
\centerline{
\epsfig{file=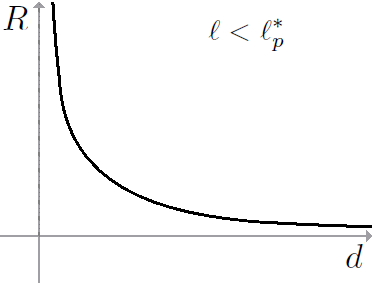,  height = 30 mm}
\qquad\qquad
\epsfig{file=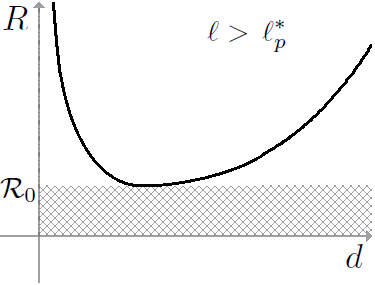,  height = 30 mm}
}
\vspace{5mm}
\centerline{
\epsfig{file=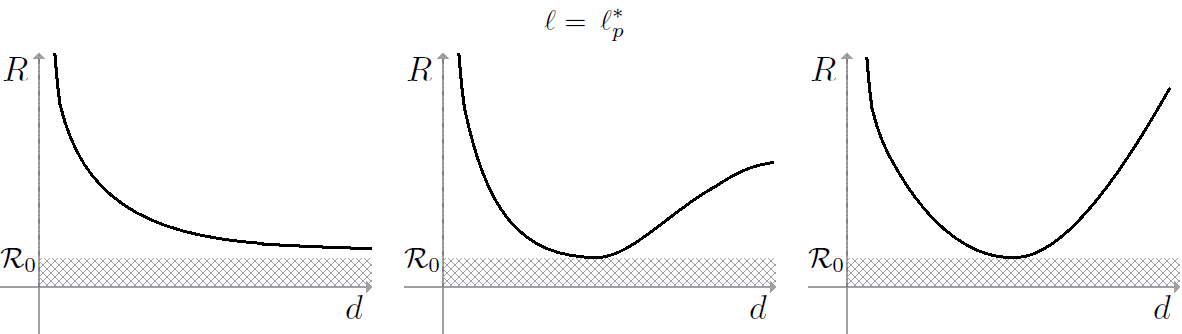, height = 34 mm}
}
\caption{A sketch of the graph of the function $R(d)$, when $\Hell$ and $\Hinc$ are assumed. In the critical case $\ell=\ell^*_{p}$, we have three possible alternatives.}
\label{diagram.inc}
\end{figure}

The focus and the original part of our study consists in analyzing the asymptotic behaviour of the solutions with large initial data $d$,
which is determined by the parameter $\ell$ in $\Hell$. In particular, we have the following
bifurcation phenomenon, when $\Hinc$ is assumed
\begin{equation}
\label{altinf}
\begin{cases}
    \ds\lim_{d \to +\infty} R(d) =0 \,,
    & \mbox{ if }\displaystyle{0<\ell< \ell^*_{p}}\,,\\
    \ds\liminf_{d \to +\infty} R(d) >0 \,,
    & \mbox{ if }\displaystyle{\ell= \ell^*_{p}}\,,\\
    \ds\lim_{d \to +\infty} R(d) =+\infty \,,
    & \mbox{ if }\displaystyle{\ell>  \ell^*_{p}}\,,\\
\end{cases}
\end{equation}
see Propositions~\ref{p.sub-bis} and~\ref{p.parziale} below.
This asymptotic analysis will allow us to draw the diagrams in Figure~\ref{diagram.inc}.

Since the number of solutions of
problem~\eqref{Problem} is the number of points of the preimage $R^{-1}(\RDIR)$, the proof of  Theorem~\ref{t.main.due} is immediately given.

Actually, using a truncation argument, see Remark~\ref{r.k}, we are
 able to get the first estimate in~\eqref{altinf}, also when assumption $\Hinc$ is dropped,
cf. Proposition~\ref{p.sub-bis}. Hence, the proof of Theorem~\ref{t.main} in the subcritical case follows.

In fact, in the $p=2$ case,  a part of~\eqref{altinf}
has been already shown in~\cite[Theorem 1.6]{LL}, see also~\cite[Remark 4.1]{CL97}:
$$
\begin{cases}
\displaystyle \lim_{d \to +\infty} R(d)=0 & \mbox{\, if  \,}\ell<\ell^*_2 \,,\\
\noalign{\smallskip}
\exists \, r_0>0 \,: \,  R(d)\geq r_0
\,,\, \forall\, d>0
&\mbox{\, if  \,}\ell\geq\ell^*_2 .
\end{cases}
$$

\begin{figure}[h]
\centerline{
\epsfig{file=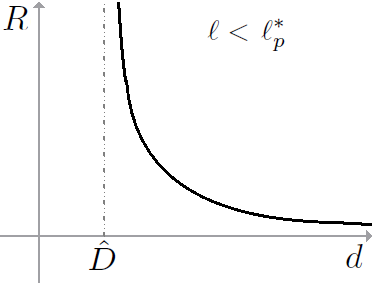,  height = 30 mm}
\qquad\qquad
\epsfig{file=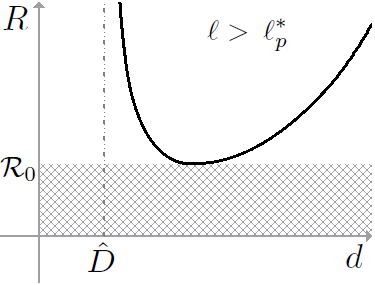,  height = 30 mm}
}
\vspace{5mm}
\centerline{
\epsfig{file=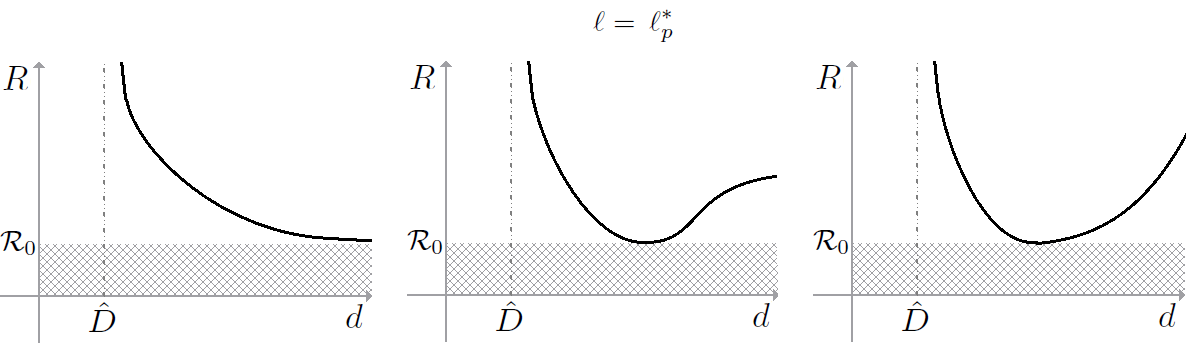, height = 34 mm}
}
\caption{A sketch of the graph of the function $R(d)$, in the setting of Theorem~\ref{t.main.tre}. In the critical case $\ell=\ell^*_{p}$, we have three possible alternatives.}
\label{diagram.noninc}
\end{figure}

\medbreak

Finally, we  adapt our analysis to the context where the global requirement $\Hinc$ is replaced by the local requirement $\ell \le \frac{n}{p-1}$,
and we reprove   the existence of the second solution in the supercritical case. As shown in Lemma~\ref{l.new.technical} below, the restriction $\ell \le \frac{n}{p-1}$, and the technical
requirement that either $\Hstab$ holds or there is $\rho_1>0$ such that $\K(r) \ge \K(\rho_1)$ when $r \ge \rho_1$
are needed just
in order to ensure that there is $\hat D \ge 0$ such that ${}]\hat D,+\infty[{} \subset J$, with $J$ as in~\eqref{defJ}.

Then, using classical arguments, we see that if $\hat D \not\in J$, then
$$\lim_{d \to \hat D} R(d)=+\infty,$$
and, adapting the computation performed when $\Hinc$ holds, we   prove  that
$R(d)$ satisfies~\eqref{altinf}  if $ \ell \le \frac{n}{p-1}$,
see Propositions~\ref{p.nuova} and~\ref{p.parzialebis} below.
As a consequence, we are able to draw the diagrams for $R(d)$  in Figure~\ref{diagram.noninc},
 from which Theorem~\ref{t.main.tre} follows.

Notice that  if $\hat D>0$, then $u(r; \hat D)$ is a Ground State, that is a solution of~\eqref{eq.pla}, positive for any $r>0$.

The paper is organized as follows.
In \S\ref{basic} we introduce the Fowler transformation, i.e.
the change of variables~\eqref{transf1} which turns~\eqref{eq.pla} into the planar, non-autonomous dynamical system~\eqref{sist-din}; then we recall some basic tools
in this context. In particular, we review some aspects of invariant manifold theory for non-autonomous systems,
and we  introduce the unstable leaves $W^u(\tau)$ of~\eqref{sist-din} which correspond to regular solutions
of~\eqref{eq.pla}.
In \S\ref{basicRd} we establish some standard properties of the function $R(d)$, such as continuity and
asymptotic properties close to $d=0$ and to $d= \hat{D}$, when $u(r;\hat D)$ is a Ground State.
The core of our argument is in \S 4, where we prove the asymptotic properties of $R(d)$ as $d$ tends to infinity:
in \S\ref{S.sub-new} we focus on the subcritical case $\ell< \ell^*_{p}$, and in \S\ref{S.super} we consider the critical and supercritical setting
$\ell \ge \ell^*_{p}$. In \S 5 we briefly conclude the proofs of the main results of the paper.

\section{Fowler transformation,\\ basic notation and preliminaries}\label{basic}

Let us  introduce a change of variables known as Fowler transformation,
which allows to transform (\ref{eq.pla}) into a two-dimensional dynamical system.
We define
\begin{align}
\label{transf1}
 & \alpha=\displaystyle{\frac{n-p}{p}}, \qquad  \beta= \displaystyle{\frac{n(p-1)}{p}}, \nonumber\\
\noalign{\medskip}
 & x=u(r)r^{\alpha}\qquad  y=u'(r)|u'(r)|^{p-2}r^{\beta} \qquad
r=\eu^t.
\end{align}
This change of variable is known from the '30s, see~\cite{Fow}, and it has been generalized
to the $p$-Laplacian case by Bidaut-V\'eron~\cite{Bv} and some years later (independently) by Franca, see e.g.~\cite{FrancaTMNA,Fcorri,FrancaAM,FrancaFE,FJ}.\\
According to (\ref{transf1}), we can rewrite (\ref{eq.pla}) as the following dynamical system:
\begin{equation}
\label{sist-din}
\left( \begin{array}{c}
\dot{x}\\
\noalign{\medskip}
\dot{y}
\end{array}\right)
=
\left( \begin{array}{cc}
\alpha& 0  \\
\noalign{\medskip}
 0 & -\alpha
\end{array} \right)
\left(
 \begin{array}{c} x \\
\noalign{\medskip}
y
\end{array}\right)
+
\left(\begin{array}{c}
y\, |y|^\frac{2-p}{p-1} \\
\noalign{\medskip}
- K(t) \, x|x|^{q-2}
 \end{array}\right),
\end{equation}
where $K(t)=\K(\eu^t)$,  $q=p^*=\frac{np}{n-p}$ as in~\eqref{scalcur},
 and ``$\cdot$"  denotes the differentiation with respect to $t$.\\
\begin{remark}
\label{r.smooth}
Note that system~\eqref{sist-din} is $C^1$ if and only if $\frac{2n}{2+n}\leq p \leq 2$.
\end{remark}

Given the initial data $\tau\in\R$ and $\QQ\in \R^2$, we will denote by
\begin{equation}\label{flux-sist-din}
\bs{\phi}(t;\tau,\QQ)
=\big(x(t;\tau,\QQ),y(t;\tau,\QQ)\big)
\end{equation}
the trajectory of~\eqref{sist-din} such that $\bs{\phi}(\tau;\tau,\QQ)=\QQ$.

 \noindent Define the energy function
\begin{equation}
\label{Hq}
 \HH(x,y;t):=\alpha xy \, +\, \frac{p-1}{p} \, |y|^{\frac{p}{p-1}}\, +\, K(t) \, \frac{|x|^q}{q}\,.
\end{equation}
If we evaluate $\mathcal H$ along a solution $(x(t),y(t))$ of~\eqref{sist-din}, we obtain the associated Pohozaev type energy $\mathcal H(x(t),y(t);t)$, whose derivative
with respect to $t$ satisfies
\begin{equation}
    \label{Hder}
\frac{d}{dt} \HH\big(x(t),y(t);t\big) = \dot K(t)\, 
\frac{|x(t)|^q}{q}\,.
\end{equation}

We also need to consider the autonomous system obtained by {\em freezing} the $t$-dependence
of $K(\cdot)$. In particular,
fixed $\tau\in\R\cup\{\pm\infty\}$,
we consider the {\em frozen} autonomous system:
\begin{equation}
\label{sist-frozen}
\left( \begin{array}{c}
\dot{x}\\
\noalign{\medskip}
\dot{y}
\end{array}\right)
=
\left( \begin{array}{cc}
\alpha& 0  \\
\noalign{\medskip}
 0 & -\alpha
\end{array} \right)
\left(
 \begin{array}{c} x \\
\noalign{\medskip}
y
\end{array}\right)
+
\left(\begin{array}{c}
y\, |y|^\frac{2-p}{p-1} \\
\noalign{\medskip}
- K(\tau) \, x|x|^{q-2}
 \end{array}\right) ,
\end{equation}
where $\K$ is assumed to have finite limit, whenever $\tau=\pm\infty$.\\

\vspace{1mm}

If we differentiate  the energy $\mathcal H(\cdot;\tau)$ of system~\eqref{sist-frozen} along a solution
\linebreak
$(x(t),y(t))$ of the dynamical system~\eqref{sist-din}, we get
\begin{equation}
    \label{ripristi}
\frac{d}{dt} \HH\big(x(t),y(t);\tau\big) = \big(K(\tau)-K(t)\big) \, x(t)|x(t)|^{q-2}\,\dot{x}(t)\,.
\end{equation}
Let us fix $\tau  \in \R\cup \{\pm\infty\}$ in~\eqref{sist-frozen},
let $\tau_0 \in \R$ and   $\QQ\in\R^2$; we   denote by
\begin{equation}\label{flux-sist-frozen}
\bs{\phi_\tau}(t;\tau_0,\QQ)
=\big(x_\tau(t;\tau_0,\QQ),y_\tau(t;\tau_0,\QQ)\big)
\end{equation}
the trajectory of~\eqref{sist-frozen}, such that  $\bs{\phi_\tau}(\tau_0;\tau_0,\QQ)=\QQ$.

System~\eqref{sist-frozen} exhibits an equilibrium point in $\{x> 0\,, y< 0\}$, of coordinates
\begin{equation}\label{EE}
\EE(\tau) =(E_x(\tau), E_y(\tau))
=
\left(
\left(\frac{\alpha^p}{K(\tau)}\right)^{\frac{1}{q-p}}
\,,\,
- \left( \frac{\alpha^q}{K(\tau)} \right)^{\frac{p-1}{q-p}}
\right).
\end{equation}
It is also well known (see, among others,~\cite{FrancaFE}) that,  for any fixed $\tau\in\R\cup\{\pm\infty\}$, system~\eqref{sist-frozen} admits a homoclinic orbit
(see Figure~\ref{fig-D})
\begin{equation}
\label{gammatau}
\bs{\Gamma_\tau} = \{(x,y) \mid \HH(x,y;\tau)=0\,, x>0 \}	\,,
\end{equation}
recalling that the case $\tau=+\infty$ requires a boundedness restriction on $\K$.
Taking into account that the flows of~\eqref{sist-din} and~\eqref{sist-frozen} are ruled by their linear part near the origin,
 we easily deduce that the origin is a
 saddle-type critical point for~\eqref{sist-din} and~\eqref{sist-frozen}.
Finally, notice that
 $\EE(\tau)$  is a centre and it  lies in the interior of  the region enclosed by $\bs{\Gamma_{\tau}}$.

According to~\cite{FJ,Gaz}, we also know the exact expression of the homoclinic trajectories $\bs{\phi^*}=(\bs{x^*},\bs{y^*})$ of~\eqref{sist-frozen}. In particular, the one corresponding to the regular Ground State $u^*$ such that $u^*(0)=d$ satisfies
\begin{equation}
\label{homosapiens}
x^*(t)=d  \left[ \eu^{-t}+C(d) \, \eu^{\frac{t}{p-1}}   \right] ^{-\frac{n-p}{p}}\,,
\end{equation}
where    $C(d)>0$ is a computable constant, see~\cite{Gaz}.\\
 Moreover,  fixed $\bs{P}\in \bs{\Gamma_{-\infty}}$ there exists a constant $c_{\bs{P}}>0$ such that
\begin{equation}
\label{nonfiniscemai}
\sup\{\|\bs{\phi_{-\infty}}(\theta+ \tau;\tau , \bs{Q})\| \eu^{-\alpha \theta}         \mid \theta \le 0 ,\;  \tau \le 0,\; \bs{Q} \in
\bs{\widetilde{\Gamma}_{-\infty}^{P}}\} \le c_{\bs{P}},
\end{equation}
where $\bs{\widetilde{\Gamma}_{-\infty}^{P}}:=\bs{\phi_{-\infty}}({}]-\infty,0];0 , \bs{P})$.

\medskip

Let us now list some immediate consequences of assumption $\Hinc$.
\begin{remark}
\label{r.incl}
Assume $K(\tau_2)>K(\tau_1)$, then  the homoclinic orbit $\bs{\Gamma_{\tau_2}}$ belongs to the region enclosed by $\bs{\Gamma_{\tau_1}}$, cf. Figure~\ref{fig-D}.
\end{remark}

\begin{figure}[t]
    \centering
    \includegraphics[scale=0.6]{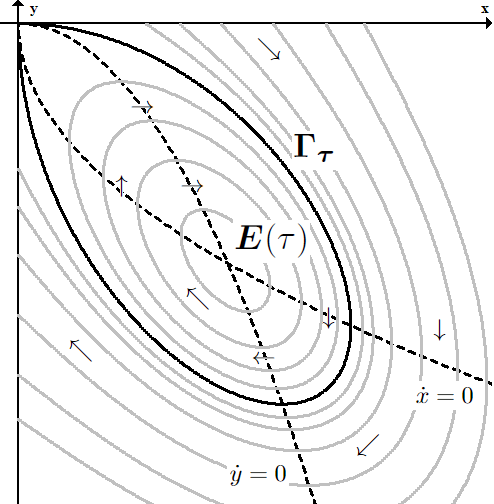}
    \qquad
    \includegraphics[scale=0.6]{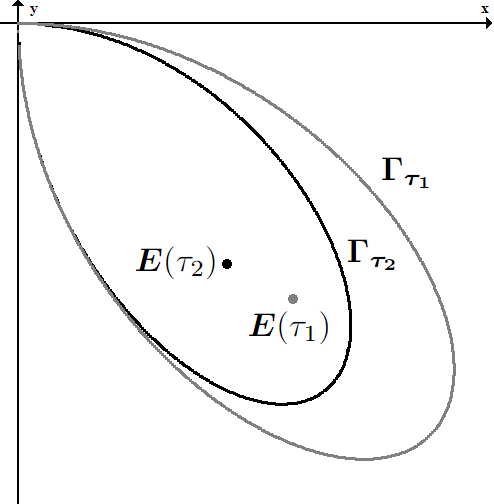}
    \caption{On the left, the energy levels of $\HH$ at fixed time $\tau$, with the homoclinic orbit $\bs{\Gamma_{\tau}}$. On the right, the position of the set $\bs{ \Gamma_{\tau_2}}$ with respect to the set $\bs{\Gamma_{\tau_1}}$ in the case $K(\tau_2)>K(\tau_1)$.}
    \label{fig-D}
\end{figure}

\begin{lemma}
\label{l.basic}
Assume $\Hinc$.
If $u(r;d)$ is a regular solution, then the corresponding trajectory $\bs{\phi}(\cdot;d)=(x(\cdot;d),y(\cdot;d))$ of  system~\eqref{sist-din}
satisfies
$$ \lim_{t\to-\infty} \HH(\bs{\phi}(t;d);t)=0\,,
$$
and
$$
\HH(\bs{\phi}(t;d);t) \geq 0
\quad\mbox{ for every }  t \leq T(d)\,
$$
(the equality occurs only when $K(t)=K(-\infty)$ holds),
where $T(d):= \sup \{ \tau \in \R:\,   x(t;d)>0 \,, \forall t\leq \tau\}$ is the first  zero of $x(\cdot;d)$.
Moreover,
$$
\HH(\bs{\phi}(t;d);\tau)
\geq
\HH(\bs{\phi}(t;d);t)
\geq
0 \quad  \mbox{ for every }  t< \min\{\tau, T(d)\}
$$
(the first equality occurs only when $K(t)=K(\tau)$ holds).
\end{lemma}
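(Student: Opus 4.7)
The plan is to exploit the monotonicity of $K(t)=\K(e^t)$ coming from $\Hinc$, together with the exact identities \eqref{Hder} and \eqref{Hq}. Since $\K$ is non-decreasing, we have $\dot K(t)\geq 0$, and this is essentially the only ingredient driving all three conclusions.

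First, I would establish the limit at $-\infty$. Because $u(\cdot;d)$ is regular, $u(r;d)\to d$ and $u'(r;d)\to 0$ as $r\to 0^+$; in fact, integrating \eqref{eq.pla} on $[0,r]$ with $u'(0)=0$ shows $|u'(r;d)|^{p-1}=O(r)$. Since $\alpha,\beta>0$, the Fowler coordinates
\begin{equation*}
x(t;d)=u(e^t;d)\,e^{\alpha t},\qquad y(t;d)=u'(e^t;d)\,|u'(e^t;d)|^{p-2}\,e^{\beta t}
\end{equation*}
both tend to $0$ as $t\to -\infty$. As $K(t)\to A$ is bounded near $-\infty$, every term in \eqref{Hq} vanishes in the limit, yielding $\lim_{t\to -\infty}\HH(\bs{\phi}(t;d);t)=0$. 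Alternatively, one may observe that regular solutions correspond to orbits lying on the local unstable leaf at $-\infty$, which must approach the origin.

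Next, for the sign statement, I would integrate \eqref{Hder} on $(-\infty,t]$ for $t\leq T(d)$: the integrand $\dot K(s)\,|x(s;d)|^q/q$ is non-negative since $\dot K\geq 0$, and combined with the limit just obtained this gives $\HH(\bs{\phi}(t;d);t)\geq 0$. Furthermore, $x(s;d)>0$ for every $s<T(d)$, so the integral vanishes at time $t$ only if $\dot K\equiv 0$ on $(-\infty,t]$, i.e.\ $K\equiv K(-\infty)$ throughout $(-\infty,t]$, which is the stated equality clause.

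Finally, for the two-parameter comparison, the definition \eqref{Hq} directly yields the algebraic identity
\begin{equation*}
\HH(\bs{\phi}(t;d);\tau)-\HH(\bs{\phi}(t;d);t)=\bigl(K(\tau)-K(t)\bigr)\,\frac{|x(t;d)|^q}{q},
\end{equation*}
which, since $\tau\geq t$ and $K$ is non-decreasing, is non-negative and vanishes only when $K(\tau)=K(t)$; chaining with the previous inequality produces the desired double estimate. No genuine obstacle is expected; the only delicate point is the decay rate of $u'(r;d)$ near $r=0$ needed to ensure $y(t;d)\to 0$, which however follows from a routine integration of \eqref{eq.pla}.
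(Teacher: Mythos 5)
Your proposal is correct and takes essentially the same approach as the paper, which simply states that the lemma "immediately follows from $\Hinc$, \eqref{Hder}, and \eqref{Hq}." Your write-up merely fleshes out the details the paper leaves implicit: the limit at $-\infty$ comes from regularity of $u$ together with $\alpha,\beta>0$ and boundedness of $K$ near $-\infty$; the sign comes from integrating \eqref{Hder} with $\dot K\geq 0$; and the two-parameter comparison is the direct algebraic identity from \eqref{Hq}, combined with monotonicity of $K$.
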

\begin{proof}
  The result immediately follows from $\Hinc$,~\eqref{Hder}, and~\eqref{Hq}.
\end{proof}

As an immediate consequence we have the following.

\begin{lemma}
\label{l.ingamma0}
Assume $\Hinc$ and fix $d>0$. Then, for every $t\le T(d) $, the trajectory $\bs{\phi}(t;d)$ of system~\eqref{sist-din}
either lies in the exterior of  the region enclosed by $\bs{\Gamma_{t}}$, or it lies on $\bs{\Gamma_{t}}$ if $K(t)=K(-\infty)$.
\end{lemma}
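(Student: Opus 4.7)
The plan is to reduce the claim to the energy inequality already established in Lemma~\ref{l.basic} by correctly identifying the region enclosed by $\bs{\Gamma_t}$ with a sublevel set of the frozen energy $\HH(\cdot\,;t)$. First I would observe that, for each fixed $\tau$, the frozen system \eqref{sist-frozen} is actually Hamiltonian with Hamiltonian $\HH(\cdot\,;\tau)$: one checks directly from \eqref{Hq} that $\partial_y\HH=\dot x$ and $-\partial_x\HH=\dot y$ along solutions of \eqref{sist-frozen}. Consequently $\HH(\cdot\,;\tau)$ is a first integral of the frozen flow, and in particular its value vanishes on the homoclinic $\bs{\Gamma_\tau}$, since $\bs{\Gamma_\tau}$ is doubly asymptotic to the origin where $\HH$ equals $0$.

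Next I would analyse the level sets of $\HH(\cdot\,;\tau)$ inside the half-plane $\{x>0\}$. The only critical point of $\HH(\cdot\,;\tau)$ in $\{x>0\}$ is the centre $\bs{E}(\tau)$ given by \eqref{EE}; a direct substitution shows $\HH(\bs{E}(\tau);\tau)<0$. Combined with the homoclinic structure, this forces the open region $\Omega_\tau$ enclosed by $\bs{\Gamma_\tau}$ (the connected component of $\{x>0\}\setminus\bs{\Gamma_\tau}$ containing $\bs{E}(\tau)$) to coincide with the component of $\{\HH(\cdot\,;\tau)<0\}\cap\{x>0\}$ containing $\bs{E}(\tau)$, while the complementary component of $\{x>0\}\setminus\bs{\Gamma_\tau}$ (the ``exterior'') is contained in $\{\HH(\cdot\,;\tau)>0\}$. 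Equivalently,
\[
\Omega_t=\{\HH(\cdot\,;t)<0\}\cap\{x>0\}\setminus\{\text{unbounded component}\},\qquad \bs{\Gamma_t}\subset\{\HH(\cdot\,;t)=0\}.
\]

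With this identification, I would conclude as follows. By definition of $T(d)$, $x(t;d)>0$ for every $t<T(d)$, and $\bs{\phi}(T(d);d)$ lies on the $y$-axis, hence outside $\overline{\Omega_t}$ in any case. For $t<T(d)$, Lemma~\ref{l.basic} yields $\HH(\bs{\phi}(t;d);t)\ge 0$, so $\bs{\phi}(t;d)\notin\Omega_t$; moreover, if $\HH(\bs{\phi}(t;d);t)=0$, the same lemma asserts that this happens only when $K(t)=K(-\infty)$, and in that case $\bs{\phi}(t;d)\in\bs{\Gamma_t}$ (the positivity of $x$ excludes the origin). This is exactly the claimed dichotomy.

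The only delicate step I expect is the topological identification of $\Omega_\tau$ with a sublevel component of $\HH(\cdot\,;\tau)$; everything else is a direct consequence of Lemma~\ref{l.basic}. Since $\HH(\cdot\,;\tau)$ has a unique critical point in $\{x>0\}$ at which it is strictly negative, tends to $0$ at the origin, and tends to $+\infty$ as $\|(x,y)\|\to\infty$ with $x>0$, the structure of its level sets is standard, so this identification can be justified in a single short paragraph.
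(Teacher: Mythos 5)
Your proof is correct and follows the same route the paper intends: the paper presents Lemma~\ref{l.ingamma0} as an ``immediate consequence'' of Lemma~\ref{l.basic}, relying implicitly on exactly the identification you make explicit, namely that the interior of $\bs{\Gamma_t}$ is the sublevel set $\{\HH(\cdot\,;t)<0\}\cap\{x>0\}$. You simply spell out the Hamiltonian structure of the frozen system and the level-set topology, which the paper treats as known.
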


The next part of the section is devoted to explore the dynamical system~\eqref{sist-din}  through an invariant manifold approach.
From~\cite{FJ,JSell,JPY}, and~\cite[\S 13.4]{CoLe}, we know that the existence of the unstable manifold is ensured by the following condition:
\begin{description}
\item[$\Hinstab$]
The function $K$ is bounded and uniformly continuous in ${}]-\infty,0]$ and $\displaystyle{\lito K(t)=K(-\infty) \ge 0}$ is finite.
  \end{description}
Notice that $\Hinstab$ is always satisfied when $\Hell$ holds.

\smallbreak

Correspondingly, the existence of the stable manifold is  guaranteed by $\Hstab$.
   The unstable manifold will play a key role in this paper, since we will see via Remark~\ref{r.corresp}
  that regular solutions $u(r;d)$ correspond to trajectories of~\eqref{sist-din} converging to the origin as $t \to -\infty$,
  i.e. leaving from the unstable manifold.

Let $B(\bs{0},\delta)$
denote the open ball of radius $\delta$, centered at the origin.
 Following~\cite[Theorem 2.1]{JPY}, which is in fact a rewording of~\cite[Theorem 2.25]{JSell}, and its reformulation in\cite[Appendix]{FrSf2},
we find the next result.
\begin{theorem}\label{local.mani}
Assume $\Hinstab$, then there is $\delta>0$ such that for any $\tau \le 0$ the set
$$M^u_{\lloc}(\tau):= \{ \Q \mid \bs{\phi}(t; \tau, \Q) \in B( \bs{0}, \delta) \; \textrm{for any $t \le \tau$} \}$$
is a $C^1$ embedded manifold tangent to the $x$-axis at the  origin if $\frac{2n}{2+n} < p\leq 2$, and to the line  $y=-\frac{K(-\infty)}{n}\,x$ if $p=\frac{2n}{2+n}$.
Further, let $\linea$
be a segment
transversal to  $M^u_{\lloc}(\tau)$, then $M^u_{\lloc}(\tau) \cap \linea= \{\bs{Q^u}(\tau)\}$ is a singleton,
 and $\bs{Q^u}(\tau)$
is uniformly continuous in $\tau$.

  Assume $\Hstab$, then there is $\delta>0$ such that for any $\tau \ge 0$ the set
$$M^s_{\lloc}(\tau):= \{ \Q \mid \bs{\phi}(t; \tau, \Q) \in B( \bs{0}, \delta) \; \textrm{for any $t \ge \tau$} \}$$
is a $C^1$ embedded manifold tangent to the $y$-axis at the  origin if $\frac{2n}{2+n} \le p<2$ and to the line $y=-(n-2)x$ if $p=2$.
 Further, let $\linea$ be a segment
transversal to  $M^s_{\lloc}(\tau)$, then $M^s_{\lloc}(\tau) \cap \linea= \{\bs{Q^s}(\tau)\}$ is a singleton, and $\bs{Q^s}(\tau)$
is uniformly continuous in $\tau$.
\end{theorem}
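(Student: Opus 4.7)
The plan is to verify that~\eqref{sist-din} fits the hypotheses of the classical non-autonomous invariant manifold theorems already cited in the statement, most importantly \cite[Theorem 2.25]{JSell}, \cite[Theorem 2.1]{JPY}, and \cite[\S 13.4]{CoLe}, and then to read off the geometric conclusions. The preliminary step is a clean decomposition $\dot{\bs\phi}=A\bs\phi+\bs F(\bs\phi,t)$ in which $A$ is a hyperbolic constant matrix and $\bs F$ is $C^1$ with $\bs F(\bs 0,t)=\bs 0$ and $D_{\bs\phi}\bs F(\bs 0,t)=\bs 0$ uniformly in $t$, the $C^1$ regularity being exactly what Remark~\ref{r.smooth} grants on the interval $\tfrac{2n}{2+n}\le p\le 2$.

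Next I would choose $A$ according to where $p$ lies in $[\tfrac{2n}{2+n},2]$. For $p\in(\tfrac{2n}{2+n},2)$ both nonlinearities in~\eqref{sist-din} vanish to order strictly greater than one, so I take $A=\mathrm{diag}(\alpha,-\alpha)$. For the endpoint $p=2$ the $\dot x$ nonlinearity $y|y|^{(2-p)/(p-1)}$ reduces to $y$, which must be absorbed into $A=\bigl(\begin{smallmatrix}\alpha&1\\0&-\alpha\end{smallmatrix}\bigr)$; for the endpoint $p=\tfrac{2n}{2+n}$ one has $q=2$ and the $\dot y$ nonlinearity $-K(t)x|x|^{q-2}$ becomes $-K(t)x$, so I would absorb its asymptotic value $-K(-\infty)x$ on the unstable side, respectively $-K(+\infty)x$ on the stable side, into $A=\bigl(\begin{smallmatrix}\alpha&0\\-K(\mp\infty)&-\alpha\end{smallmatrix}\bigr)$, keeping the bounded remainder $-(K(t)-K(\mp\infty))x$ inside $\bs F$. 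In every case $\mathrm{spec}(A)=\{\alpha,-\alpha\}$, so $A$ is hyperbolic; the cited theorems then yield $M^u_{\lloc}(\tau)$ and $M^s_{\lloc}(\tau)$ as $C^1$ Lipschitz graphs over the corresponding eigenspaces, with tangent line at the origin equal to that eigenspace. A direct eigenvector computation gives the stated tangencies: the $\alpha$-eigenvector of $\mathrm{diag}(\alpha,-\alpha)$ and of $\bigl(\begin{smallmatrix}\alpha&1\\0&-\alpha\end{smallmatrix}\bigr)$ is the $x$-axis, whereas the one of $\bigl(\begin{smallmatrix}\alpha&0\\-K(-\infty)&-\alpha\end{smallmatrix}\bigr)$, using $\alpha=n/2$ when $p=\tfrac{2n}{2+n}$, is the line $y=-\tfrac{K(-\infty)}{n}x$; symmetrically the $(-\alpha)$-eigenvectors produce the $y$-axis, or the line $y=-(n-2)x$ at $p=2$ via $v_2=-2\alpha v_1=-(n-2)v_1$.

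Finally, for the singleton intersection I would invoke transversality: after possibly shrinking $\delta$, a $C^1$ curve through the origin meets a fixed transversal segment $\linea$ in exactly one point $\bs{Q^u}(\tau)$, and analogously for $\bs{Q^s}(\tau)$. Uniform continuity of $\tau\mapsto\bs{Q^u}(\tau)$ and $\tau\mapsto\bs{Q^s}(\tau)$ I would derive from the Lyapunov--Perron fixed point underlying the cited results: the associated contraction depends continuously on the $t$-translate of $K(\cdot)$, and the uniform continuity of $K$ on the corresponding half-line, built into $\Hinstab$ (respectively $\Hstab$), transfers through standard fixed point estimates to uniform continuity of the intersection with any fixed transversal segment. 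The step I expect to be the main obstacle is the endpoint $p=\tfrac{2n}{2+n}$, where the leftover term $-(K(t)-K(-\infty))x$ inside $\bs F$ is genuinely first order rather than $o(\|\bs\phi\|)$; here I would appeal to the roughness of exponential dichotomies, as in \cite[\S 13.4]{CoLe}, to preserve the hyperbolic splitting under this small bounded perturbation as $t\to-\infty$, which is exactly the behavior encoded in $\Hinstab$, with the symmetric argument at $+\infty$ handling $\Hstab$.
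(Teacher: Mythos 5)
Your proposal is correct and follows essentially the same route as the paper, which derives the theorem directly from the cited non-autonomous invariant manifold results (\cite[Theorem 2.1]{JPY}, \cite[Theorem 2.25]{JSell}, \cite[\S 13.4]{CoLe}) and their reformulation in \cite[Appendix]{FrSf2}. What you add, usefully, is the explicit verification that the paper leaves implicit: the choice of hyperbolic linear part $A$ in each of the three regimes, the eigenvector computations that produce the stated tangent lines (in particular $\alpha=n/2$ at $p=\tfrac{2n}{2+n}$ giving $y=-\tfrac{K(-\infty)}{n}x$, and $2\alpha=n-2$ at $p=2$ giving $y=-(n-2)x$), and the correct identification of the genuine subtlety at $p=\tfrac{2n}{2+n}$, namely that the residual term $-(K(t)-K(\mp\infty))x$ is only asymptotically small rather than $o(\|\bs\phi\|)$, which is exactly the type of decaying linear perturbation handled by the Sell/Johnson--Pan--Yi framework (and by the roughness results in \cite[\S 13.4]{CoLe}) under $\Hinstab$/$\Hstab$.
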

This result may also be obtained by using the simpler approach developed in~\cite[\S 13.4]{CoLe}, with the exception of
 the part concerning
the uniform continuity of $\bs{Q^{u}}(\cdot)$ and $\bs{Q^{s}}(\cdot)$.
Actually,~\cite[Theorem 2.1]{JPY} ensures that $\bs{Q^{u}}(\tau)$ and $\bs{Q^{s}}(\tau)$ are $C^1$
 if  $K(t)$ is $C^1$.

We notice that  $M^u_{\lloc}(\tau)$
is split by the origin in two connected components.
Since we are just interested in positive solutions, we denote by $W^u_{\lloc}(\tau)$ and $W^{u,-}_{\lloc}(\tau)$ the components
lying respectively in $x >0$ and $x < 0$. Similarly,  $M^s_{\lloc}(\tau)$
is split by the origin in two connected components, say $W^s_{\lloc}(\tau)$ and $W^{s,-}_{\lloc}(\tau)$,
lying in $x >0$ and $x < 0$, respectively.\\
Following again  either~\cite[Theorem 2.25]{JSell}
 or~\cite[\S 13.4]{CoLe} and in particular Theorem 4.5, we deduce  some crucial properties concerning the uniform exponential asymptotic behaviour of the trajectories intersecting the manifolds $W^u_{\lloc}(\tau)$ and $W^s_{\lloc}(\tau)$.
\begin{theorem}\label{local.mani1}
Assume $\Hinstab$ and $\Hstab$, then the  sets
$$W^u_{\lloc}(\tau_1):= \{ \Q \in M^u_{\lloc}(\tau_1) \mid x(t; \tau_1 , \Q) >0 \; \textrm{for any $t \le \tau_1$} \} \cup \{(0,0) \},$$
$$W^s_{\lloc}(\tau_2):= \{ \Q \in M^s_{\lloc}(\tau_2) \mid x(t; \tau_2 , \Q) >0 \; \textrm{for any $t \ge \tau_2$} \}\cup \{(0,0) \}$$
are $C^1$ embedded manifold for any $\tau_1 \le 0 \le \tau_2$. Furthermore,
if $\bs{Q^u} \in W^u_{\lloc}(\tau_1)$ and $\bs{Q^s} \in W^s_{\lloc}(\tau_2)$, then
the limits
$$\lito \|\bs{\phi}(t;\tau_1, \bs{Q^u}) \| \eu^{-\alpha t} \,, \qquad \lit \|\bs{\phi}(t;\tau_2, \bs{Q^s}) \| \eu^{\alpha t}$$
are positive and  finite. Further, there is $c_0=c_0(\delta)$    such that
$$ \sup\{\|\bs{\phi}(\theta+ \tau_1;\tau_1, \bs{Q^u})\| \eu^{-\alpha \theta}         \mid \theta \le 0 ,\;  \tau_1 \le 0,\; \bs{Q^u} \in W^u_{\lloc}(\tau_1)\} \le c_0(\delta),$$
$$ \sup\{\|\bs{\phi}(\theta+ \tau_2;\tau_2, \bs{Q^s})\| \,\eu^{\alpha \theta}\,\,\, \mid \theta \ge 0 ,\;  \tau_2 \ge 0,\;\bs{Q^s} \in W^s_{\lloc}(\tau_2)\} \le c_0(\delta).$$
\end{theorem}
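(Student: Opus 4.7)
The plan is to build on Theorem~\ref{local.mani} together with the abstract non-autonomous invariant manifold machinery of \cite[Theorem 2.25]{JSell} and \cite[\S 13.4]{CoLe}. Theorem~\ref{local.mani} already gives $M^u_{\lloc}(\tau_1)$ as a $C^1$ embedded one-dimensional submanifold tangent at the origin to a line transverse to the $y$-axis; shrinking $\delta$ if needed, it is therefore a $C^1$ graph $y=g^u(x,\tau_1)$ over an interval $(-\eta,\eta)$ of the $x$-axis, and the origin splits it into two connected arcs. The set $W^u_{\lloc}(\tau_1)$ is by definition the arc with $x>0$ together with the origin, so it inherits the $C^1$ embedded submanifold structure. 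The statement for $W^s_{\lloc}(\tau_2)$ is obtained by the symmetric parametrisation, using that for $\frac{2n}{n+2}\le p<2$ the tangent line is the $y$-axis while for $p=2$ it is $y=-(n-2)x$, which in either case is transverse to the $x$-axis.

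For the asymptotics, I would rewrite~\eqref{sist-din} in the form $\dot{\bs{Z}}=A\bs{Z}+\bs{N}(\bs{Z},t)$, with $A=\mathrm{diag}(\alpha,-\alpha)$ and $\bs{N}(\bs{Z},t)=\bigl(y|y|^{(2-p)/(p-1)},-K(t)x|x|^{q-2}\bigr)$. The standing restriction $\frac{2n}{n+2}\le p\le 2$ makes both $1/(p-1)$ and $q-1$ strictly greater than $1$, so that $\|\bs{N}(\bs{Z},t)\|\le C\|\bs{Z}\|^{1+\kappa}$ for some $\kappa>0$, uniformly for $\|\bs{Z}\|\le\delta$ and $t\in\R$, thanks also to the boundedness of $K$ supplied by $\Hinstab$ and $\Hstab$. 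For $\bs{Q^u}\in W^u_{\lloc}(\tau_1)\setminus\{\bs{0}\}$, the Lyapunov--Perron representation of the local unstable fibre yields
\begin{equation*}
x(t)=\eu^{\alpha(t-\tau_1)}x(\tau_1)+\int_{\tau_1}^{t}\eu^{\alpha(t-s)}N_1(\bs{Z}(s),s)\,ds,\qquad y(t)=-\int_{-\infty}^{t}\eu^{-\alpha(t-s)}N_2(\bs{Z}(s),s)\,ds,
\end{equation*}
and a standard contraction argument in the weighted sup-norm $\sup_{s\le\tau_1}\eu^{-\alpha(s-\tau_1)}\|\bs{Z}(s)\|$ gives $\bs{Z}(t)=\eu^{\alpha(t-\tau_1)}\bigl(\bs{Z}_\infty+o(1)\bigr)$ as $t\to-\infty$, with $\bs{Z}_\infty\ne\bs{0}$ (otherwise $\bs{Z}$ would coincide with the null solution by uniqueness of the Lyapunov--Perron fixed point, contradicting $\bs{Q^u}\ne\bs{0}$). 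Multiplying by $\eu^{-\alpha t}$ produces the positive finite limit, while the weighted-norm bound supplies the uniform constant $c_0(\delta)$. The stable manifold statement is entirely analogous, by time reversal and invoking $\Hstab$ in place of $\Hinstab$.

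The main obstacle will be to secure the uniformity in $\tau_1$ and $\tau_2$ of $c_0(\delta)$. This is precisely the point where $\Hinstab$ and $\Hstab$ are used in an essential way: they provide a uniform-in-$t$ bound on $\bs{N}$, and since the spectral gap $2\alpha$ of $A$ does not depend on the base time, the Lipschitz constant of the Lyapunov--Perron operator on the $\delta$-ball in the weighted norm is strictly less than $1$ once $\delta$ is fixed small enough, independently of $\tau_1\le 0$ (resp.\ $\tau_2\ge 0$). The positivity of $x(t)$ for $t\le\tau_1$ (resp.\ $t\ge\tau_2$) is built into the very definition of $W^u_{\lloc}$ (resp.\ $W^s_{\lloc}$) and requires no further argument at this stage.
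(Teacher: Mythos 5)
Your plan and the paper's are the same at heart: both of you reduce the statement to the abstract invariant-manifold theory of Johnson--Sell and Coddington--Levinson, and indeed the paper proves the theorem by citing those results directly (it supplies no further argument). So the high-level route is correct. However, the concrete decomposition you propose has a genuine gap at the endpoints of the admissible range for $p$.

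You write $\dot{\bs{Z}}=A\bs{Z}+\bs{N}(\bs{Z},t)$ with $A=\mathrm{diag}(\alpha,-\alpha)$ and claim $\|\bs{N}(\bs{Z},t)\|\le C\|\bs{Z}\|^{1+\kappa}$ because ``the standing restriction $\frac{2n}{n+2}\le p\le 2$ makes both $1/(p-1)$ and $q-1$ strictly greater than $1$.'' That claim is false at both endpoints. At $p=2$ one has $1/(p-1)=1$, so $N_1(\bs{Z},t)=y|y|^{(2-p)/(p-1)}=y$ is genuinely linear, not superlinear, and the Lyapunov--Perron operator cannot be made a contraction by shrinking $\delta$. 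The correct linearisation there is $\left(\begin{smallmatrix}\alpha & 1\\ 0 & -\alpha\end{smallmatrix}\right)$, whose stable eigendirection is $y=-(n-2)x$, exactly as Theorem~\ref{local.mani} records. Symmetrically, at $p=\frac{2n}{n+2}$ one has $q=2$, so $N_2(\bs{Z},t)=-K(t)x$ is linear with a \emph{time-dependent} coefficient; the correct linear part is $\left(\begin{smallmatrix}\alpha & 0\\ -K(t) & -\alpha\end{smallmatrix}\right)$, whose unstable direction at $t=-\infty$ is $y=-\frac{K(-\infty)}{n}x$, again as in Theorem~\ref{local.mani}. In that case the constant diagonal $A$ you chose does not even capture the right tangent line, and the linear part of the system is non-autonomous, which is precisely the regime the Johnson--Sell/exponential-dichotomy machinery was built to handle and which a contraction on the ball with $A=\mathrm{diag}(\alpha,-\alpha)$ does not cover.

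In the open interval $\frac{2n}{n+2}<p<2$ your argument is sound: both nonlinear terms are then $O(\|\bs{Z}\|^{1+\kappa})$ uniformly in $t$ (using the boundedness of $K$ from $\Hinstab$ and $\Hstab$), the spectral gap of $A$ is time-independent, and the weighted-norm contraction gives both the exponential limits and the uniform constant $c_0(\delta)$. To cover the closed range claimed in the theorem you must (i) at $p=2$ put the $y$-term into the linear part and diagonalise, and (ii) at $p=\frac{2n}{n+2}$ work with the time-dependent linear part $\left(\begin{smallmatrix}\alpha & 0\\ -K(t) & -\alpha\end{smallmatrix}\right)$, whose exponential dichotomy with exponents $\pm\alpha$ is guaranteed by $\Hinstab$ (resp.\ $\Hstab$). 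With those corrections the Lyapunov--Perron scheme goes through and your uniformity argument for $c_0(\delta)$ is recovered. Alternatively, you can simply invoke the cited abstract results as stated, which only require the (correctly identified) nonlinear remainder to be $o(\|\bs{Z}\|)$ uniformly in $t$ and do not insist on a constant diagonal linear part.
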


Using the flow of~\eqref{sist-din}, with a standard argument, we pass from the local manifolds $W^u_{\lloc}(\tau_1)$
and $W^s_{\lloc}(\tau_2)$, defined for $\tau_1 \le 0 \le \tau_2$, to the global manifolds $W^u(\tau)$ and $W^s(\tau)$ defined
for any $\tau \in \R$.
In fact, rephrasing~\cite[Appendix]{FrSf2},
we set
\begin{equation}\label{defW1}
\begin{split}
     W^u(\tau):= & \displaystyle{\bigcup_{T\le 0}} \big\{ \bs{\phi}(\tau; T, \Q)  \mid \Q \in W^u_{\lloc}(T)   \big\},  \\
\noalign{\smallskip}
       W^s(\tau):= & \displaystyle{\bigcup_{T\ge 0} \big\{ \bs{\phi}(\tau; T, \Q)  \mid \Q \in W^s_{\lloc}(T)   \big\}}.
\end{split}
\end{equation}
We observe that $W^u(\tau)$ and $W^s(\tau)$, the unstable and stable leaves respectively, are $C^1$ immersed manifolds which can be characterized as follows
\begin{equation}
\label{wuu}
\begin{split}
   W^u(\tau):= & \Big\{ \QQ \mid \lim_{t\to-\infty}\, \bs{\phi}(t;\tau,\QQ)=(0,0), \; x(t;\tau,\QQ) \ge 0 \textrm{ when $t\ll 0$} \Big\}\,,\\
   W^s(\tau):= & \Big\{ \QQ \mid \lim_{t\to+\infty}\, \bs{\phi}(t;\tau,\QQ)=(0,0), \; x(t;\tau,\QQ) \ge 0 \textrm{ when $t\gg 0$} \Big\}\,.
   \end{split}
\end{equation}
By construction, if $\Q \in W^u(\tau)$, then $\bs{\phi}(t;\tau,\QQ) \in W^u(t)$ for any $t \in \R$.


From Theorem~\ref{local.mani}
 and the smooth dependence of the flow of~\eqref{sist-din} on initial data,
we get the  smoothness property of the unstable and stable manifold.
 \begin{remark}
\label{r.cuno}
Assume   $\Hinstab$, then $W^u(\tau)$ depends continuously on $\tau$. Namely,
 let $\linea$ be  a segment which intersects  $W^u(\tau_0)$  transversely in a point $\QQ(\tau_0)$, then
there is a neighborhood  $I$  of $\tau_0$ such that $W^u(\tau)$
  intersects $\linea$ in a point $\QQ(\tau)$ for any $\tau \in I$, and $\QQ(\cdot)$ is continuous. Actually, it has the same
  regularity as \eqref{sist-din}, so it is $C^1$ if \eqref{sist-din} is $C^1$ in $t$.
  Analogously if $\Hstab$ holds, then $W^s(\tau)$ depends continuously on $\tau$, and smoothly if \eqref{sist-din} is smooth in $t$.
\end{remark}

 Let us denote by $W^u(-\infty)$  the unstable manifold $\bs{\Gamma_{-\infty}}$, defined in~\eqref{gammatau}.\\
According to~\cite{BFdP} and~\cite[\S 2.2]{FrSf2}, the smoothness property of $W^u(\tau)$ observed in Remark~\ref{r.cuno} can be extended to $\tau_0=-\infty$. 

\begin{remark}
\label{r.zerotoo}
Assume $\Hinstab$,
 then
$W^u(\tau)$ depends smoothly on $\tau\in    \R\cup\{-\infty\}$.\\
In particular, let $\linea$ be a segment transversal to $\bs{\Gamma_{-\infty}}$ and let  $\bs{Q}_{\linea}(-\infty)$ be the intersection point between $\linea$ and $\bs{\Gamma_{-\infty}}$;
follow $W^u(\tau)$ from the origin towards $x>0$,
then it intersects $\linea$ transversely in a point, say $\QQ_{\linea}(\tau)$, for any $\tau \leq -N$,
 for a suitable sufficiently large
$N=N(\linea)>0$; furthermore, the function $\QQ_{\linea}(\cdot)$ is $C^1$  and
$\lim_{\tau \to -\infty} \bs{Q}_{\linea}(\tau)= \bs{Q}_{\linea}(-\infty)$.\\
 We stress that $\QQ_{\linea}(\tau)$ is uniquely defined as the first
 intersection between $W^u(\tau)$ and $\linea$, although it might not be the unique intersection,
 especially if $\linea$ is too large.
\end{remark}
Let $\linea$ be a segment transversal to $\bs{\Gamma_{-\infty}}$, take $\tau \le -N$ and
 denote by $\tilde{W}^u_{\linea}(\tau)$  the compact and connected branch of $W^u(\tau)$ between the origin and
$\QQ_{\linea}(\tau)$.
Analogously, denote by $\tilde{W}^u_{\linea}(-\infty)$ the compact connected branch of $\bs{\Gamma_{-\infty}}=W^u(-\infty)$
between the origin and $\QQ_{\linea}(-\infty)$.

\begin{lemma}\label{l-stim}
  Assume  $\Hinstab$. Let $\linea$ and $N$ be as in Remark~\ref{r.zerotoo}, then  there is $c=c(\linea)$ such that
  \begin{equation}\label{est.esp}
    \sup \{ \|\bs{\phi}(\theta+ \tau; \tau, \QQ) \|\eu^{-\alpha \theta} \mid
    \QQ \in \tilde{W}^u_{\linea}(\tau), \; \theta \le 0, \; \tau \le - N \} \le c(\linea) .
  \end{equation}
\end{lemma}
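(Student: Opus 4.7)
The plan is to split each trajectory $\bs{\phi}(\cdot;\tau,\QQ)$, with $\QQ\in\tilde{W}^u_{\linea}(\tau)$, into an ``inner'' portion inside $\overline{B(\bs 0,\delta)}$, controlled directly by Theorem~\ref{local.mani1}, and an ``outer'' portion whose duration will be shown to be uniformly bounded by some $T_0=T_0(\linea)$. Fix $\delta>0$ as in Theorem~\ref{local.mani1} with constant $c_0=c_0(\delta)$. Since $\bs{\phi}(t;\tau,\QQ)\to\bs 0$ as $t\to-\infty$ by~\eqref{wuu},
\begin{equation*}
T_{\QQ}:=\sup\bigl\{s\le\tau\mid\|\bs{\phi}(t;\tau,\QQ)\|\le\delta\text{ for every }t\le s\bigr\}
\end{equation*}
is well defined and $\bs{\phi}(T_{\QQ};\tau,\QQ)\in W^u_{\lloc}(T_{\QQ})$. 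Uniqueness of solutions to~\eqref{sist-din} together with Theorem~\ref{local.mani1} then gives
\begin{equation*}
\|\bs{\phi}(t;\tau,\QQ)\|\le c_0\,\eu^{\alpha(t-T_{\QQ})}\qquad\text{for every }t\le T_{\QQ}.
\end{equation*}

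The core step is the uniform transit-time bound $T_0:=\sup\bigl\{\tau-T_{\QQ}\mid\tau\le-N,\ \QQ\in\tilde{W}^u_{\linea}(\tau)\bigr\}<+\infty$. I argue by contradiction: if $\tau_k-T_{\QQ_k}\to+\infty$ along sequences $\tau_k\le-N$ and $\QQ_k\in\tilde{W}^u_{\linea}(\tau_k)$, consider the time-translated trajectories $\psi_k(\theta):=\bs{\phi}(\theta+\tau_k;\tau_k,\QQ_k)$, which satisfy~\eqref{sist-din} with $K(\theta)$ replaced by $K(\theta+\tau_k)$ and $\psi_k(0)=\QQ_k$. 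Extracting a subsequence, either $\tau_k$ is bounded (with limit $\tau_\ast$) or $\tau_k\to-\infty$ (set $\tau_\ast=-\infty$); Remarks~\ref{r.cuno} and~\ref{r.zerotoo} then give $\QQ_k\to\QQ_\infty\in\tilde{W}^u_{\linea}(\tau_\ast)$. Hypothesis $\Hinstab$, via uniform continuity of $K$, ensures that $K(\cdot+\tau_k)$ converges uniformly on compact $\theta$-intervals to $K(\cdot+\tau_\ast)$ (to the constant $K(-\infty)$ when $\tau_\ast=-\infty$). Continuous dependence of solutions of~\eqref{sist-din} on parameters then yields $\psi_k\to\psi_\infty$ locally uniformly in $\theta$, where $\psi_\infty$ is a trajectory of the limit equation, satisfies $\psi_\infty(0)=\QQ_\infty$, and obeys $\psi_\infty(\theta)\to\bs 0$ exponentially as $\theta\to-\infty$ (by~\eqref{nonfiniscemai} when $\tau_\ast=-\infty$, and by Theorem~\ref{local.mani1} applied to the shifted non-autonomous system when $\tau_\ast$ is finite). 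Choose $S>0$ so large that $\|\psi_\infty(-S)\|<\delta/(2c_0)$; uniform convergence on $[-S,0]$ forces $\|\psi_k(-S)\|<\delta/c_0$ for $k$ large. Since $\psi_k(-S)\in W^u(-S+\tau_k)$ by the flow-invariance of the unstable leaves and has small norm, the local uniqueness of the unstable manifold (Theorem~\ref{local.mani}) yields $\psi_k(-S)\in W^u_{\lloc}(-S+\tau_k)$. Applying Theorem~\ref{local.mani1} at time $-S+\tau_k$ then gives $\|\psi_k(\theta)\|\le\delta$ for every $\theta\le-S$, i.e.\ $\tau_k-T_{\QQ_k}\le S$, contradicting the assumption.

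Once $T_0$ is established, the conclusion is straightforward. For $t\le T_{\QQ}$, i.e.\ $\theta:=t-\tau\le T_{\QQ}-\tau$, the local bound reads $\|\bs{\phi}(t;\tau,\QQ)\|\le c_0\,\eu^{\alpha T_0}\,\eu^{\alpha\theta}$ since $\tau-T_{\QQ}\le T_0$. For $T_{\QQ}<t\le\tau$ the trajectory evolves for a time at most $T_0$ starting from a point on the $\delta$-sphere, so it stays in a compact set $\mathcal{C}_{\linea}\subset\R^2$ depending only on $\linea$; setting $M:=\sup_{\mathcal{C}_{\linea}}\|\cdot\|$ and using $\theta\ge-T_0$, we obtain $\|\bs{\phi}(t;\tau,\QQ)\|\le M\le M\,\eu^{\alpha T_0}\,\eu^{\alpha\theta}$. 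Taking $c(\linea):=\max\{c_0,M\}\eu^{\alpha T_0}$ yields~\eqref{est.esp}. The main obstacle is the transit-time bound $T_0<+\infty$, which combines the smooth extension of $W^u(\tau)$ up to $\tau=-\infty$ (Remark~\ref{r.zerotoo}) with the frozen-system convergence guaranteed by $\Hinstab$; a secondary subtlety is verifying that a $W^u(\sigma)$-point close to the origin automatically lies in $W^u_{\lloc}(\sigma)$, which is standard from the local invariant manifold theorem.
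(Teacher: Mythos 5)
Your proof is correct in spirit and reaches the same conclusion, but it takes a genuinely different route from the paper's. The paper proceeds constructively: it singles out the frozen orbit on $\bs{\Gamma_{-\infty}}$ through $\QQ_\linea(-\infty)$, reads off from it a fixed backward ``transit time'' $M=M(\delta,\linea)$ after which the frozen orbit sits in $B(\bs0,\delta/2)$, and then transfers this to the non-autonomous system for $\tau\le -N$ via Remark~\ref{r.zerotoo}, compactness of $\tilde{W}^u_{\linea}(\tau)\setminus W^u_{\lloc}(\tau)$, and continuous dependence on the finite interval $[-M,0]$. Your proof instead establishes the uniform transit-time bound $T_0<+\infty$ by contradiction and a compactness/subsequence argument: translate the trajectories in time, extract convergent subsequences $\tau_k\to\tau_\ast$ and $\QQ_k\to\QQ_\infty$, pass to a limit trajectory $\psi_\infty$ of the (possibly frozen) limit system, and contradict the assumed unboundedness. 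The two arguments use the same ingredients (Remark~\ref{r.zerotoo}, Theorem~\ref{local.mani1}, continuous dependence, $\Hinstab$), but yours trades the explicit construction of $M$ for a sequential-compactness reduction. Each approach has the same delicate step: both you and the paper infer from ``$\bs R\in W^u(\sigma)$ and $\|\bs R\|<\delta$'' that $\bs R\in W^u_{\lloc}(\sigma)$, i.e.\ that the entire backward orbit of $\bs R$ stays inside $B(\bs0,\delta)$. This is not a direct consequence of Theorem~\ref{local.mani}'s singleton-intersection statement, which you invoke as ``local uniqueness''; what actually makes it work (for both proofs) is that the point in question lies in the unstable cone where the backward flow contracts the norm, which one sees because $\bs R$ (resp.\ $\psi_k(-S)$) is close to a point of $\bs{\Gamma_{-\infty}}$ (resp.\ of $W^u_{\lloc}$) which is strictly inside the cone. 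Spelling that out would tighten the step where you cite Theorem~\ref{local.mani}. A secondary point: your case distinction $\tau_\ast$ finite versus $\tau_\ast=-\infty$ is correct, but you should note that in the finite case $\tau_\ast\le -N$, so $\QQ_\infty\in\tilde{W}^u_{\linea}(\tau_\ast)$ and Theorem~\ref{local.mani1} is applicable as stated; the paper's single construction (work at $\tau=-\infty$, push to $\tau\le -N$) sidesteps the dichotomy.
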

\begin{proof}
Let $\delta>0$ be the fixed constant defined in Theorem~\ref{local.mani}.
If $\linea$ is close enough to the origin so that  $\|\bs{Q}_{\linea}(-\infty)\|<\delta$, then
    by construction
$\tilde{W}^u_{\linea}(\tau) \subset W^u_{\lloc}(\tau)$
for every $\tau\leq -N$, see~\eqref{defW1}, and~\eqref{est.esp} follows straightforwardly from Theorem~\ref{local.mani1}.

Now, let $\linea$ be a generic segment transversal to $\bs{\Gamma_{-\infty}}$ satisfying
$\|\bs{Q}_{\linea}(-\infty)\|\geq \delta$, and let  $M=M(\delta,\linea)>0$ be such that
   $$\| \bs{\phi_{-\infty}}(  -M ; 0, \bs{Q}_{\linea}(-\infty))\| =
   \delta/2\,. 
   $$
Fix a point $\bs{\hat{Q}}$ belonging to $\tilde{W}^u_{\linea}(-\infty)\setminus B(\bs{0},\frac{\delta}{2})$, then
there is
$t_{\bs{\hat{Q}} }\in[-M, 0]$   such that
$\bs{\phi_{-\infty}}(  t_{\bs{\hat{Q}}} ; 0,  \bs{Q}_{\linea}(-\infty))=\bs{\hat{Q}}$. By construction,
$\|\bs{\phi_{-\infty}}( \tau -M ; \tau, \bs{\hat{Q}})\|\leq{\frac{\delta}{2}}$ for any $\tau<0,$
since
$\bs{\phi_{-\infty}}( \tau -M ; \tau, \bs{\hat{Q}} )=\bs{\phi_{-\infty}}(  -M ; 0, \bs{\hat{Q}} )=
 \bs{\phi_{-\infty}}(  -M+t_{\bs{\hat{Q}} } ; 0,  \bs{Q}_{\linea}(-\infty))$.

 Using Remark~\ref{r.zerotoo} combined with the compactness of  $\tilde{W}^u_{\linea}(\tau)\setminus W^u_{\lloc}(\tau)$, continuous dependence on initial data and parameters of the flow of~\eqref{sist-din},
 and possibly choosing a larger $N$,
 we can assume that $\bs{R}=\bs{\phi}(\tau-M;\tau, \QQ)$
   is such that
  \begin{equation}\label{precisazione}
   \|\bs{R}\| < \delta, \quad \textrm{ so that}   \quad \bs{R}\in W^u_{\lloc}(\tau-M)
  \end{equation}
   whenever $\tau <-N$ and $\QQ \in \tilde{W}^u_{\linea}(\tau)\setminus W^u_{\lloc}(\tau)$;   notice that $M$ does not depend on $\QQ$ and $\tau$, but depends on $\linea$.
  The existence of $\bs{R}$ as in~\eqref{precisazione} is trivial for any $\QQ\in W^u_{\lloc}(\tau)$, so
\eqref{precisazione}  holds for any    $\QQ \in  \tilde{W}^u_{\linea}(\tau)$.

    Thus,  Theorem~\ref{local.mani1} ensures the existence of $c_0=c_0(\delta)>0$ such that
   $$   \sup \{ \|\bs{\phi}(s+ \tau-M; \tau, \bs{Q})\| \eu^{-\alpha s}  \mid \QQ \in \tilde{W}^u_{\linea}(\tau), \; s \le 0, \; \tau \le - N   \} \le c_0.$$
  Hence, $ \|\bs{\phi}(\theta+ \tau ; \tau, \bs{Q}) \|\eu^{-\alpha \theta} \le c_0 \eu^{\alpha M}$, for any $\theta \le -M$ and $\QQ \in \tilde{W}^u_{\linea}(\tau)$,
which proves
~\eqref{est.esp} when $\theta \le -M$.

To prove~\eqref{est.esp} for $-M \le \theta \le0$,
 it is enough to recall that $\tilde{W}^u_{\linea}(\tau)$ is compact, $M>0$ is fixed and the flow of~\eqref{sist-din} is continuous;
 then the lemma follows by choosing some
   $c(\linea) \ge c_0 \eu^{\alpha M}$.
\end{proof}

The following lemma better describes the behaviour of the solutions $\bs{\phi}(\cdot; \tau, \QQ)$ departing
from a point $\QQ \in W^u(\tau)$, as $\tau \to-\infty$. Roughly speaking, we can say that such trajectories mime the autonomous dynamical system~\eqref{sist-frozen} frozen at $\tau=-\infty$.

\begin{lemma}\label{l.vicine}
Assume  $\Hinstab$. Let $\linea$ and $\QQ_\linea(-\infty)$ be as in  Remark  $\ref{r.zerotoo}$.
Then,
\begin{equation}
\label{vicine}
\lim_{\tau \to -\infty} \, \sup_{\theta\leq 0}
\left\| \bs{\phi}\big(\theta+\tau;\tau,\QQ_\linea(\tau)\big)- \bs{\phi}_{-\infty}\big(\theta;0,\QQ_\linea(-\infty)\big) \right\| = 0\,.
\end{equation}
\end{lemma}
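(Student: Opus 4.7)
The plan is to split the parameter $\theta\in{}]-\infty,0]$ into a compact piece $[-M,0]$ and a tail $]-\infty,-M]$, handling the tail via the exponential decay estimates of Lemma~\ref{l-stim} and~\eqref{nonfiniscemai}, and the compact piece via continuous dependence on initial data and parameters, essentially a Gr\"onwall argument. Throughout, write $\bs{\psi}_\tau(\theta):=\bs{\phi}(\theta+\tau;\tau,\QQ_\linea(\tau))$ and $\bs{\chi}(\theta):=\bs{\phi}_{-\infty}(\theta;0,\QQ_\linea(-\infty))$, so that $\bs{\psi}_\tau$ solves~\eqref{sist-din} with the ``time'' argument $\theta+\tau$ and $\bs{\chi}$ solves the frozen system~\eqref{sist-frozen} at $\tau=-\infty$.

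First, given $\epsilon>0$, I use Lemma~\ref{l-stim} applied to $\bs{\psi}_\tau$ (noting $\QQ_\linea(\tau)\in\tilde{W}^u_\linea(\tau)$ for $\tau\le -N$) and~\eqref{nonfiniscemai} applied to $\bs{\chi}$ (noting $\QQ_\linea(-\infty)\in\bs{\widetilde{\Gamma}_{-\infty}^{\bs{P}}}$ for $\bs{P}=\QQ_\linea(-\infty)$) to obtain a constant $c>0$, independent of $\tau\le -N$, such that
$$\|\bs{\psi}_\tau(\theta)\|+\|\bs{\chi}(\theta)\|\le c\,\eu^{\alpha\theta}\qquad\text{for every }\theta\le 0.$$
Choose $M>0$ so large that $c\,\eu^{-\alpha M}<\epsilon/2$; then $\|\bs{\psi}_\tau(\theta)-\bs{\chi}(\theta)\|<\epsilon$ for every $\theta\le -M$ and every $\tau\le -N$. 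This disposes of the tail.

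Second, on the compact interval $[-M,0]$ both trajectories stay inside a fixed compact ball $B\subset\R^2$, on which the right-hand side of~\eqref{sist-din} is Lipschitz in $(x,y)$ (uniformly in $t$) with some constant $L$, thanks to Remark~\ref{r.smooth}. Writing $F(\bs{Q},t)$ for the right-hand side of~\eqref{sist-din} and $F_{-\infty}(\bs{Q})=F(\bs{Q},-\infty)$ for that of~\eqref{sist-frozen} at $\tau=-\infty$, the error $e_\tau(\theta):=\bs{\psi}_\tau(\theta)-\bs{\chi}(\theta)$ satisfies
$$\dot e_\tau(\theta)=\bigl[F(\bs{\psi}_\tau(\theta),\theta+\tau)-F(\bs{\chi}(\theta),\theta+\tau)\bigr]+\bigl[F(\bs{\chi}(\theta),\theta+\tau)-F_{-\infty}(\bs{\chi}(\theta))\bigr].$$
The first bracket is bounded in norm by $L\|e_\tau(\theta)\|$; the second bracket depends only on the coordinate difference $|K(\theta+\tau)-K(-\infty)|$ multiplied by a term uniformly bounded on $B$. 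By $\Hinstab$, $K(s)\to K(-\infty)$ as $s\to-\infty$, and since $\theta+\tau\le\tau\to-\infty$ uniformly in $\theta\in[-M,0]$, we get
$$\eta(\tau):=\sup_{\theta\in[-M,0]}|K(\theta+\tau)-K(-\infty)|\longrightarrow 0\qquad\text{as }\tau\to-\infty.$$
Integrating the differential inequality backwards from $\theta=0$, and using $\|e_\tau(0)\|=\|\QQ_\linea(\tau)-\QQ_\linea(-\infty)\|\to 0$ by Remark~\ref{r.zerotoo}, Gr\"onwall's lemma yields
$$\sup_{\theta\in[-M,0]}\|e_\tau(\theta)\|\le\bigl(\|\QQ_\linea(\tau)-\QQ_\linea(-\infty)\|+M\,C\,\eta(\tau)\bigr)\eu^{LM}\xrightarrow[\tau\to-\infty]{}0,$$
for a suitable constant $C$ depending on $B$ but not on $\tau$. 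Combining the two regimes gives~\eqref{vicine}.

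The only genuinely delicate step is the tail estimate, because the bound from Lemma~\ref{l-stim} must be applied uniformly in $\tau\le -N$: once that uniform exponential decay is in hand, the compact-interval comparison is a standard Gr\"onwall argument made legitimate by the $C^1$ regularity guaranteed by Remark~\ref{r.smooth} and by the pointwise convergence $K(s)\to K(-\infty)$ provided by $\Hinstab$.
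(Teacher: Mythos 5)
Your proof is correct and follows essentially the same route as the paper: both split $\theta\le 0$ into a tail $\theta\le -M$, handled by the uniform exponential decay of Lemma~\ref{l-stim} together with~\eqref{nonfiniscemai}, and a compact piece $\theta\in[-M,0]$, handled by a continuity argument combined with $\QQ_\linea(\tau)\to\QQ_\linea(-\infty)$ from Remark~\ref{r.zerotoo}. The only difference is presentational: the paper argues by contradiction (extracting bounded or unbounded subsequences $\theta_n$) and appeals to ``continuous dependence on initial data and parameters,'' whereas you give a direct $\epsilon$--$M$ proof and make the continuous-dependence step explicit through a Gr\"onwall inequality, decomposing the error derivative into a Lipschitz part and a term controlled by $|K(\theta+\tau)-K(-\infty)|$ which vanishes uniformly on $[-M,0]$ as $\tau\to-\infty$. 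Your version is arguably cleaner because it isolates exactly where $\Hinstab$ and the a priori bound enter; but the underlying ideas and the key lemmas invoked are identical.
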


\begin{proof}
For brevity, we set
$$
\bs{\xi_\tau}(\theta)= \bs{\phi}(\theta+\tau; \tau, \QQ_\linea(\tau))
\quad\text{and}\quad
\bs{\xi^*}(\theta)= \bs{\phi_{-\infty}}(\theta; 0,  \QQ_\linea(-\infty))\,.
$$
We argue by contradiction, and assume  that there exist $\ee>0$ and two sequences $(\theta_n)_n\,,\, (\tau_n)_n \subset {}]-\infty, 0]$, such that $\tau_n \to -\infty$ satisfying
\begin{equation}\label{ip1}
\|\bs{\xi_{\tau_n}}(\theta_n)-\bs{\xi^*}(\theta_n)\|>\ee\,.
\end{equation}
Combining Lemma~\ref{l-stim} with~\eqref{nonfiniscemai},
we easily find that
$$ \|\bs{\xi_{\tau_n}}(\theta_n)-\bs{\xi^*}(\theta_n)\| \le \|\bs{\xi_{\tau_n}}(\theta_n)\|+ \|\bs{\xi^*}(\theta_n)\| \le 2 c(\linea) \eu^{\alpha \theta_n},
\quad \forall \tau_n\leq -N .$$
 If $(\theta_n)_n$ is unbounded,
then we easily get a contradiction with~\eqref{ip1}.

 Thus, we can assume that there is $M>0$ such that
 $(\theta_n)_n\subset [-M,0]$.
 Using continuous dependence on initial data and parameters, we see that for any $\ee>0$ there exists
  $\delta(\ee)>0$ such that if $\| \QQ_\linea(\tau)-\QQ_\linea(-\infty)\|<\delta$, then
  $\| \bs{\xi_\tau}(\theta)-\bs{\xi^*}(\theta)\| <\ee$ for any $\theta \in [-M,0]$ whenever $\tau<-N$.

  Afterwards, from Remark~\ref{r.zerotoo},
 we find $N_1=N_1(\delta)>N>0$ large enough so that
  $\| \QQ_\linea(\tau)-\QQ_\linea(-\infty)\|<\delta$ for any $\tau <-N_1$, so that eventually we get
  $\| \bs{\xi_\tau}(\theta)-\bs{\xi^*}(\theta)\| <\ee$ for any $\theta \in [-M,0]$ whenever $\tau<-N_1$;
  but this is in contradiction with~\eqref{ip1}, since we are assuming that $(\theta_n)_n \subset [-M,0]$ and
  $\tau_n \to -\infty$. The Lemma is thus proved.
\end{proof}

\noindent We denote by $$\bs{\bs{\phi}}(t;d)=(x(t;d),y(t;d))$$ the trajectory of~\eqref{sist-din}
corresponding to the regular solution  $u(r;d)$ of~\eqref{eq.pla}.

According to~\cite{FrancaCAMQ, FrancaFE},  all the regular solutions correspond to trajectories in the unstable leaf.

\begin{remark}
\label{r.corresp}
Assume that $\K \in C^1$ and  $\Hinstab$ holds.
Then,
$$u(r;d) \mbox{ is a regular solution}\quad\Longleftrightarrow\quad \bs{\phi}(\tau;d) \in W^u(\tau)  \mbox{ for every } \tau\in\R.$$
Further,  fixed $\tau_0 \in \R$,
the function $\QQQ_{\bs{\tau_0}}:[0,+\infty[{}\to W^u(\tau_0)$ defined by
\linebreak
$\QQQ_{\bs{\tau_0}}(d):=\bs{\phi}(\tau_0;d)$
 is a  continuous  (bijective) parametrization of   $W^u(\tau_0)$. In particular,
$\QQQ_{\bs{\tau_0}}(0)=(0,0)$.
 Therefore,
for any $\QQ \in W^u(\tau_0)$ there is a unique $d(\QQ) \ge 0$ such that
 $\bs{\phi}(t;d(\QQ)) \equiv \bs{\phi}(t; \tau_0, \QQ)$ for any $t \in \R$.

In fact, the dependence of  $\QQQ_{\bs{\tau_0}}$ with respect to the parameter $\tau_0$ is $C^1$.

 \end{remark}
The existence of a bijective  parametrization of   $W^u(\tau)$ can be   obtained extending to the $p$-Laplacian setting the argument developed in the proof of~\cite[Lemma 2.10]{DF}, written in the case $p=2$. The smoothness of $\QQQ_{\bs{\tau_0}}(d)=\bs{\phi}(\tau_0;d)$ with respect to $\tau_0$ follows from the smoothness
of the flow of~\eqref{sist-din}.

\smallbreak

According to~\cite[Lemma 3.7]{Ni}, we can easily prove the monotonicity properties of regular solutions.  In particular,
\begin{remark}
\label{r.decri}
Any regular solution $u(r ; d)$  of~\eqref{eq.pla} is decreasing until its first zero.
\end{remark}

Fix $\linea$ as in Remark~\ref{r.zerotoo}, so that $\QQ_{\linea}(\tau)$ is well defined for any $\tau<-N(\linea)$.
From Remark~\ref{r.corresp}, we can define the function
$d_\linea:{}]-\infty, -N(\linea)[{} \to {}]0,+\infty[{}$ by setting $d_\linea(\tau): = d(\QQ_{\linea}(\tau))$ for any $\tau<-N(\linea)$. In particular,
 $\bs{\bs{\phi}}(t;d_\linea(\tau))\equiv \bs{\phi}(t; \tau , \QQ_{\linea}(\tau))$.

Now, we need the following weak version of the implicit function theorem, cf.~\cite[Theorem 15.1]{Deim}.
\begin{theorem}\label{diniDebole}
  Let $A(x,y): \R^2 \to \R$ be a function continuous along with its partial derivative $\frac{\partial A}{\partial x}(x,y)$.
  Let $A(x_0,y_0)=0 $ and $\frac{\partial A}{\partial x}(x_0,y_0) \ne 0$, then we can find $\delta>0$ and exactly one  continuous function
$x(y):[y_0-\delta, y_0+\delta] \to \R$ such that $x(y_0)=x_0$ and $A(x(y),y)=0$ for any $y \in[y_0-\delta, y_0+\delta]$.
\end{theorem}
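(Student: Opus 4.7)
The statement is essentially the classical implicit function theorem with minimal regularity: only continuity of $A$ and of $\partial_x A$, and nonvanishing of $\partial_x A$ at the base point. My plan is to prove it by combining the intermediate value theorem (for existence of the root) with local strict monotonicity of $A(\cdot,y)$ (for uniqueness), and then to deduce continuity of $x(y)$ by re-running the same argument at nearby points.

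First, by replacing $A$ with $-A$ if necessary, I assume $\frac{\partial A}{\partial x}(x_0,y_0)>0$. By continuity of $\frac{\partial A}{\partial x}$, there exist $\eta_0>0$ and $\delta_0>0$ such that
\begin{equation*}
\frac{\partial A}{\partial x}(x,y) > 0 \qquad \text{for every } (x,y)\in [x_0-\eta_0, x_0+\eta_0]\times [y_0-\delta_0,y_0+\delta_0].
\end{equation*}
Hence, for each fixed $y$ in this interval, the map $x\mapsto A(x,y)$ is strictly increasing on $[x_0-\eta_0,x_0+\eta_0]$.

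Next, I fix $\eta \in (0,\eta_0]$. Since $A(x_0,y_0)=0$, strict monotonicity gives $A(x_0-\eta,y_0)<0<A(x_0+\eta,y_0)$. By continuity of $A$, there is $\delta=\delta(\eta)\in (0,\delta_0]$ such that $A(x_0-\eta,y)<0<A(x_0+\eta,y)$ for every $y\in[y_0-\delta,y_0+\delta]$. The intermediate value theorem applied to the continuous map $x\mapsto A(x,y)$ then produces, for each such $y$, a point $x(y)\in(x_0-\eta,x_0+\eta)$ with $A(x(y),y)=0$; this point is unique in $[x_0-\eta_0,x_0+\eta_0]$ by strict monotonicity. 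In particular, $x(y_0)=x_0$ and the function $y\mapsto x(y)$ is well defined on $[y_0-\delta,y_0+\delta]$.

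It remains to show continuity of $x(\cdot)$. I fix $y_1\in[y_0-\delta,y_0+\delta]$ and $\varepsilon>0$; reducing $\varepsilon$ I can assume that $[x(y_1)-\varepsilon,x(y_1)+\varepsilon]\subset[x_0-\eta_0,x_0+\eta_0]$. By the strict monotonicity of $A(\cdot, y_1)$ and $A(x(y_1),y_1)=0$ we get $A(x(y_1)-\varepsilon,y_1)<0<A(x(y_1)+\varepsilon,y_1)$; continuity of $A$ then yields $\delta'>0$ such that $A(x(y_1)-\varepsilon,y)<0<A(x(y_1)+\varepsilon,y)$ whenever $|y-y_1|<\delta'$. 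For every such $y$ (still inside $[y_0-\delta,y_0+\delta]$), the unique zero $x(y)\in[x_0-\eta_0,x_0+\eta_0]$ must lie in $(x(y_1)-\varepsilon,x(y_1)+\varepsilon)$, proving $|x(y)-x(y_1)|<\varepsilon$. Uniqueness of the continuous solution selector follows from the uniqueness of the zero in $[x_0-\eta_0,x_0+\eta_0]$, after possibly shrinking $\delta$ to guarantee that any continuous $\tilde x$ with $\tilde x(y_0)=x_0$ stays inside this interval.

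The main obstacle is purely notational: keeping the two neighbourhood parameters $\eta$ (width in the $x$-direction, dictated by the monotonicity strip) and $\delta$ (width in the $y$-direction, dictated by continuity of $A$) cleanly separated throughout the argument. Once this bookkeeping is in place, the rest is a direct combination of the intermediate value theorem with the strict monotonicity furnished by the sign of $\partial_x A$, without invoking any $C^1$ structure beyond that already present in the hypotheses.
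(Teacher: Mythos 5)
Your proposal is correct, and it supplies a genuine proof where the paper supplies none: the paper simply quotes this statement as Theorem~15.1 of Deimling's \emph{Nonlinear Functional Analysis}, without reproducing an argument. Your route — local sign-definiteness of $\partial_x A$ on a rectangle, strict monotonicity of $A(\cdot,y)$, the intermediate value theorem for existence, uniqueness of the zero in the strip, and then the same IVT bracketing at nearby base points for continuity — is the standard direct proof of this weak implicit function theorem and is sound.

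One small phrasing issue at the very end: you write that uniqueness of the continuous selector follows ``after possibly shrinking $\delta$ to guarantee that any continuous $\tilde x$ with $\tilde x(y_0)=x_0$ stays inside this interval.'' You cannot choose $\delta$ in advance to force every putative continuous selector to remain in the strip; that is not how the quantifiers run. The correct way to finish, using only what you have already established, is a confinement argument: take $\eta<\eta_0$ so that $A(x_0-\eta,y)<0<A(x_0+\eta,y)$ for $|y-y_0|\le\delta$. If $\tilde x$ is continuous on $[y_0-\delta,y_0+\delta]$ with $\tilde x(y_0)=x_0$ and $A(\tilde x(y),y)\equiv 0$, and if $\tilde x$ ever left $(x_0-\eta,x_0+\eta)$, then by the intermediate value theorem there would be a first $y^*$ with $\tilde x(y^*)=x_0\pm\eta$, contradicting $A(x_0\pm\eta,y^*)\ne 0$. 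Hence $\tilde x$ stays in the strip and coincides with the constructed $x(\cdot)$ by uniqueness of the zero there. Equivalently, one can observe that $\{y:\tilde x(y)=x(y)\}$ is nonempty, closed, and open in the connected interval. With that one-line repair the proof is complete.
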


Our next aim consists in showing the invertibility and monotonicity of $d_{\linea}$.
\begin{lemma}\label{l.corresp.new}
Assume  $\Hinstab$.
  Let $\linea$, $N=N(\linea)$ be as in Remark $\ref{r.zerotoo}$. Then, there is $D=D(\linea,N)$ such that the function
  $d_\linea:{}]-\infty, -N[{} \to {}]D, +\infty[{}$, defined by the property
 $\bs{\bs{\phi}}(t;d_\linea(\tau))\equiv \bs{\phi}(t; \tau , \QQ_{\linea}(\tau))$, is continuous, bijective, monotone decreasing,  and its inverse
 $\tau_\linea:{}]D, +\infty[{} \to {}]-\infty, -N[{}$
 is  continuous.

Furthermore,
  $\displaystyle{\lim_{\tau \to -\infty} d_\linea(\tau)}=+\infty$, and so $\displaystyle{\lim_{d\to +\infty} \tau_\linea(d)}=-\infty$.
\end{lemma}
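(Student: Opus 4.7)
The plan proceeds in three main steps---continuity of $d_\linea$, the asymptotic $d_\linea(\tau)\to+\infty$, and strict monotonicity---from which bijectivity onto ${}]D,+\infty[{}$ and continuity of the inverse follow formally.

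For continuity I invoke Remark~\ref{r.corresp}: since $\QQ_\linea(\tau) \in W^u(\tau)$ by Remark~\ref{r.zerotoo}, there is a unique $d_\linea(\tau) := d(\QQ_\linea(\tau)) > 0$ (we may shrink $\linea$ to stay off the origin to ensure positivity). Fixing a reference time $\tau_0$, the identity $d_\linea(\tau) = \QQQ_{\bs{\tau_0}}^{-1}\bigl(\bs{\phi}(\tau_0;\tau,\QQ_\linea(\tau))\bigr)$ displays $d_\linea$ as a composition of the continuous flow in $(\tau,\QQ)$, the continuous map $\tau \mapsto \QQ_\linea(\tau)$ from Remark~\ref{r.zerotoo}, and the continuous $\QQQ_{\bs{\tau_0}}^{-1}$, hence $d_\linea$ is continuous.

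For the asymptotic I argue by contradiction. If $d_\linea(\tau_n) \le D_0$ along some $\tau_n \to -\infty$, the pointwise convergence $\QQQ_{\bs{\tau}}(d) = \bs{\phi}(\tau;d) \to \bs{0}$ as $\tau \to -\infty$ (because $\bs{\phi}(\cdot;d)$ sits on the unstable leaf), combined with continuity in $d$ and compactness of $[0,D_0]$, supplies a common $\tau_0 \le 0$ with $\bs{\phi}(\tau_0;d) \in W^u_{\lloc}(\tau_0)$ for every $d \in [0,D_0]$. Theorem~\ref{local.mani1} then yields the uniform bound $\|\bs{\phi}(t;d)\| \le c_0 \eu^{\alpha(t-\tau_0)}$ for $t \le \tau_0$ and $d \in [0,D_0]$; evaluating at $t = \tau_n \to -\infty$ forces $\QQ_\linea(\tau_n) = \bs{\phi}(\tau_n;d_\linea(\tau_n)) \to \bs{0}$, contradicting $\QQ_\linea(\tau_n) \to \QQ_\linea(-\infty) \ne \bs{0}$ from Remark~\ref{r.zerotoo}. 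Hence every subsequential limit is $+\infty$.

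For strict monotonicity I apply Theorem~\ref{diniDebole} to $F(\tau,d) := \langle \bs{n},\, \bs{\phi}(\tau;d) - \bs{a} \rangle$, with $\bs{n}$ a unit normal to $\linea$ and $\bs{a} \in \linea$ fixed, so $F(\tau,d_\linea(\tau)) = 0$. One has $\partial_d F \ne 0$ by the transversality of $W^u(\tau)$ to $\linea$ from Remark~\ref{r.zerotoo}, while $\partial_\tau F = \langle \bs{n},\, \dot{\bs{\phi}}(\tau;d_\linea(\tau)) \rangle$ and $\dot{\bs{\phi}}(\tau;d_\linea(\tau))$ is the right-hand side of~\eqref{sist-din} at $(\QQ_\linea(\tau),\tau)$. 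Since $K(\tau) \to K(-\infty)$ and $\QQ_\linea(\tau) \to \QQ_\linea(-\infty)$, this velocity converges to the right-hand side of~\eqref{sist-frozen} (with $\tau = -\infty$) at $\QQ_\linea(-\infty)$, which is tangent to $\bs{\Gamma_{-\infty}}$ at that point and therefore transversal to $\linea$ by construction. Enlarging $N$ if needed so both partial derivatives are nonzero on ${}]-\infty,-N[{}$, the $C^1$ dependence of the flow on $(\tau,d)$ (valid under $\frac{2n}{n+2} \le p \le 2$ by Remarks~\ref{r.smooth} and~\ref{r.corresp}) promotes Theorem~\ref{diniDebole} to the standard implicit function theorem, giving $d_\linea \in C^1$ with $d'_\linea = -\partial_\tau F / \partial_d F$ of constant sign. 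Since $d_\linea(\tau) \to +\infty$ as $\tau \to -\infty$, this sign is negative and $d_\linea$ is strictly decreasing. Setting $D := \lim_{\tau \to -N^-} d_\linea(\tau) \in [0,+\infty)$, bijectivity onto ${}]D,+\infty[{}$ and continuity of $\tau_\linea$ follow at once from continuity and strict monotonicity, and $\lim_{d\to+\infty}\tau_\linea(d) = -\infty$ merely restates the second step.

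The hardest part is Step~2: extracting a single backward time $\tau_0$ at which every $\bs{\phi}(\tau_0;d)$, with $d \in [0,D_0]$, has already entered $W^u_{\lloc}(\tau_0)$. This blends the continuity in $d$ of $\QQQ_{\bs{\tau}}$, the pointwise convergence $\QQQ_{\bs{\tau}}(d) \to \bs{0}$ for each fixed $d$, and a compactness argument on $[0,D_0]$; once done, Theorem~\ref{local.mani1} supplies the uniform exponential decay that is incompatible with $\QQ_\linea(-\infty) \ne \bs{0}$.
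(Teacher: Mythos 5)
Your Steps 1 and 2 are essentially in the right spirit (and Step 2 takes a genuinely different route: you argue by contradiction via the uniform exponential bound of Theorem~\ref{local.mani1}, whereas the paper obtains $d_\linea(\tau)\to+\infty$ directly from Lemma~\ref{l.vicine} and the monotonicity of regular solutions (Remark~\ref{r.decri}): $d_\linea(\tau)=u(0)\ge u(\eu^{\tau})=Q_x(\tau)\eu^{-\alpha\tau}>c\,\eu^{-\alpha\tau}$; this avoids the compactness claim you invoke, which as written is thin --- knowing $\|\bs{\phi}(\tau_0;d)\|<\delta$ does not by itself give $\bs{\phi}(\tau_0;d)\in W^u_{\lloc}(\tau_0)$, since membership in the local unstable manifold is a condition on the whole backward trajectory, not a pointwise one).

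The genuine gap is in Step 3. You apply Theorem~\ref{diniDebole} with $F(\tau,d)=\langle\bs n,\bs\phi(\tau;d)-\bs a\rangle$ and claim $\partial_d F\neq 0$ exists and is continuous, so that $F$ is $C^1$ in $(\tau,d)$ and the standard implicit function theorem yields a $C^1$ inverse with $d_\linea'=-\partial_\tau F/\partial_d F$ of constant sign. But Remark~\ref{r.corresp} asserts only that the parametrization $d\mapsto\QQQ_{\bs{\tau_0}}(d)=\bs\phi(\tau_0;d)$ is \emph{continuous} (bijective), not $C^1$ in $d$; the $C^1$ statement there is in $\tau_0$, and Remark~\ref{r.smooth} concerns smoothness of the right-hand side of~\eqref{sist-din}, hence of the flow in its phase-space initial data $\QQ$, not in the shooting parameter $d$ (which enters via the singular limit $t\to-\infty$). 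So $\partial_d F$ is not available, the transversality of $W^u(\tau)$ to $\linea$ does not by itself produce a nonvanishing derivative of a parametrization that is only known to be continuous, and your sign argument for $d_\linea'$ collapses. This is precisely why the paper invokes the weak version of the implicit function theorem, Theorem~\ref{diniDebole}, in the \emph{$\tau$-variable only}: Remark~\ref{r.cuno} gives $C^1$ dependence on $\tau$, whence $\partial A/\partial\tau$ exists, is continuous and nonzero, producing a continuous local inverse $\tau_\linea(d)$. Bijectivity plus continuity of $\tau_\linea$ between real intervals then gives strict monotonicity of both $\tau_\linea$ and $d_\linea$ by the intermediate value theorem, without ever differentiating in $d$; the direction (decreasing) is then fixed by the asymptotic $d_\linea(\tau)\to+\infty$. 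To repair your argument, drop the claim $\partial_d F\neq 0$, solve for $\tau$ instead of $d$ via Theorem~\ref{diniDebole} using only $\partial_\tau F\neq 0$, and deduce monotonicity topologically rather than by a sign of the derivative.
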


\begin{proof}
 As noticed in Remark~\ref{r.zerotoo}, the function $d_\linea: {}]-\infty, - N[{} \to {}]0,+\infty[{}$ is well defined, due to the transversality of the first crossing between $W^u(\tau)$ and $\linea$,
for $\tau<-N$.
We   show now that $d_\linea$ admits a  continuous  inverse, by constructing it via
 Theorem~\ref{diniDebole}.

Denote by $\bar{\linea}$ the straight line containing the segment $\linea$, and let $\mathcal{D}(\QQ)$ be the smooth function which evaluates the directed distance from $\bar{\linea}$ to $\QQ$, so that $\mathcal{D}(\QQ)=0$ if and only if $\QQ \in \bar{\linea}$.
 Recalling the parametrization $\QQQ\bs{_{\tau}}$ of $W^u(\tau)$ introduced in Remark~\ref{r.corresp}, we define $A(\tau,d):= \mathcal{D}(\QQQ\bs{_{\tau}}(d))$.
Let us consider a couple $(\tau_0,d_0)$ with $\tau_0< -N$ and $d_0=d_\linea(\tau_0)$. In particular,
$\QQQ\bs{_{\tau_0}}(d_0)=\QQ_{\linea}(\tau_0)$ and so
$A(\tau_0,d_0)=\mathcal{D}( \QQ_{\linea}(\tau_0))=0$.
From the smoothness property of $W^u(\tau)$ given in Remark~\ref{r.cuno}, we can compute
\begin{align}\label{derivoA}
\frac{\partial A}{\partial \tau} (\tau_0,d_0)
& = \left\langle \nabla \mathcal{D}(\QQQ\bs{_{\tau_0}}(d_0))
\,,\,
\frac{\partial }{\partial \tau}\QQQ\bs{_{\tau_0}}(d_0)
\right\rangle=\\
& = \left\langle \nabla \mathcal{D}(\QQQ\bs{_{\tau_0}}(d_0))
\,,\,
\bs{\dot{\phi}}(\tau_0; \tau_0, \QQQ\bs{_{\tau_0}}(d_0)) \right\rangle \ne 0,
\end{align}
since $\nabla \mathcal{D}(\QQ)$ is orthogonal to $\bar\linea$ and $\bs{\dot{\phi}}(\tau_0; \tau_0, \QQQ\bs{_{\tau_0}}(d_0))$ is transversal to $\linea$.

From the previous formula we also find  that $\frac{\partial A}{\partial \tau}$ is continuous, so
 we can apply Theorem~\ref{diniDebole}, thus finding locally  a continuous function $\tau_\linea(d)$
 such that $A(\tau_\linea(d),d) \equiv 0$;  so by construction $\tau_\linea(d)$ is the local inverse of $d_\linea$.
Since the previous argument can be performed for every couple $(\tau_0,d_0)$  satisfying $d_0=d_\linea(\tau_0)$ with
$\tau_0\in{}]-\infty,-N[{}$, we can conclude that the image of $d_\linea$ is an open interval~$\mathcal U$, and that $\tau_\linea(d)$ is a global inverse.

\medbreak

We now  show  that $\lim_{\tau\to-\infty} d_\linea(\tau)=+\infty$.

Let $c>0$ be such that $\linea\subset \{(x,y) \mid x>c\}$. Then, for every $\tau<-N$
 the point $\QQ_{\linea}(\tau)=(Q_x(\tau),Q_y(\tau))$ is such that $Q_x(\tau)>c$.
Let us consider the solution $u(r;d_\linea(\tau))$ corresponding to the trajectory
 $\bs{\bs{\phi}}(t;d_\linea(\tau))\equiv \bs{\phi}(t; \tau , \QQ_{\linea}(\tau))$.
According to Lemma~\ref{l.vicine}, $u(r;d_\linea(\tau))$ is positive for $r\in [0,\eu^{\tau}]$, and from Remark~\ref{r.decri}, we deduce that
\begin{equation}\label{dtau}
  d_\linea(\tau) = u(0;d_\linea(\tau)) \ge u(\eu^{\tau};d_\linea(\tau))= Q_x(\tau) \,\eu^{-\alpha \tau} > c\,\eu^{-\alpha \tau} \to +\infty
\end{equation}
as $\tau \to -\infty$.
Finally, we find that $d_\linea$ is decreasing
and there exists  $D=D(\linea,N)$ such that $d_\linea({}]-\infty,-N[{})={}]D,+\infty[{}$.
\end{proof}

\section{Basic properties of the function $\bs{R(d)}$.}\label{basicRd}

Using the Pohozaev identity, see e.g.\cite{FrancaAM, FrancaCAMQ, KNY,NiSe},
 it is possible to obtain the following classical result.



\begin{proposition}\label{p.classical.Poho}
  Assume $\Hinc$, then all the regular solutions $u(r;d)$ of~\eqref{eq.pla} have a zero at $r=R(d)$. Hence, all the corresponding trajectories
  $\bs{\phi}(t;d)$ of~\eqref{sist-din} are such that $y(t;d)<0<x(t;d)$ when $t<T(d):=\ln R(d)$ and $x(T(d);d)=0>y(T(d);d)$.
\end{proposition}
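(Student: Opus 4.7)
The plan is to argue by contradiction, assuming the regular solution $u(r;d)$ has no zero on $[0,+\infty)$, and to derive a contradiction from the Pohozaev-type identity \eqref{Hder} combined with the monotonicity hypothesis $\Hinc$.

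Under this assumption, $u$ is strictly decreasing on $(0,+\infty)$: indeed, integrating \eqref{eq.pla} one finds $r^{n-1}|u'(r)|^{p-2}u'(r)=-\int_0^r s^{n-1}\K(s)u^{q-1}\,ds<0$ for every $r>0$. Via the Fowler transformation \eqref{transf1} this translates into $x(t;d)>0$ and $y(t;d)<0$ for every $t\in\R$. Lemma~\ref{l.basic} then guarantees $\HH(\bs{\phi}(t;d);t)\geq 0$ throughout $\R$, and since $\Hinc$ provides strict positivity of $\dot K$ on some interval, \eqref{Hder} yields constants $\tau_0\in\R$ and $\delta>0$ such that $\HH(\bs{\phi}(t;d);t)\geq\delta$ for every $t\geq\tau_0$.

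The core step is to turn this strictly positive energy bound into a contradiction with the assumed global positivity of $u$. The cleanest route is the classical Pohozaev identity in the original variables, tailored to the critical exponent $q=p^*$: multiplying \eqref{eq.pla} by a suitable combination of $r u'(r)$ and $u(r)$ and integrating on $[0,R]$ produces, after the cancellations characteristic of the critical exponent, an identity of the form
\begin{equation*}
\int_0^{R} r^{n}\,\K'(r)\,u(r)^{q}\,dr \,=\, \text{boundary terms evaluated at } r=R.
\end{equation*}
Letting $R\to+\infty$: the left-hand side is strictly positive by $\Hinc$, while the right-hand side vanishes owing to the decay rates forced by \eqref{eq.pla} on any regular positive solution defined on all of $[0,+\infty)$; these decay rates can also be read off from \eqref{homosapiens} applied to the frozen system at $\tau=-\infty$, in view of Remark~\ref{r.corresp} and Lemma~\ref{l.ingamma0}. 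This contradiction forces $T(d):=\ln R(d)<+\infty$.

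Once finiteness of $T(d)$ is established, the remaining claims follow directly. Strict monotonicity of $u$ on $[0,R(d)]$ (Remark~\ref{r.decri}) gives $y(t;d)<0<x(t;d)$ for every $t<T(d)$, and by continuity $x(T(d);d)=0$ with $y(T(d);d)\leq 0$. The strict inequality $y(T(d);d)<0$ follows from uniqueness of solutions of the $C^1$ system \eqref{sist-din} (see Remark~\ref{r.smooth}): otherwise $\bs{\phi}(T(d);d)=(0,0)$ would lie on the orbit, but since $(0,0)$ is an equilibrium, no non-trivial trajectory can attain it in finite time. The main technical obstacle is controlling the boundary terms at infinity in the Pohozaev identity; all other steps are routine consequences of the lemmas already at our disposal.
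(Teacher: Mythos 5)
The paper does not prove Proposition~\ref{p.classical.Poho}; it cites~\cite{DN,KN} (for $p=2$) and~\cite{GHMY,KNY} (for the $p$-Laplacian), where the Pohozaev argument is carried out. Your proposal correctly identifies the strategy (Fowler transformation, the energy inequality $\HH\geq\delta>0$ for $t\geq\tau_0$, Pohozaev identity), and the identity you display is indeed the integrated form of~\eqref{Hder}. However, the crucial step is not justified.

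The gap is in the claim that the boundary terms at infinity in the Pohozaev identity vanish. Note first that in Fowler coordinates the boundary term at $r=R=\eu^{T}$ \emph{is} $\HH(\bs{\phi}(T;d);T)$, so what you are asserting is $\HH(\bs{\phi}(T;d);T)\to 0$ as $T\to+\infty$ --- which is precisely the statement $\HH\geq\delta>0$ is meant to contradict, so its justification must be independent. Second, the justification you offer is wrong: formula~\eqref{homosapiens} describes the homoclinic of the \emph{frozen} autonomous system, and Remark~\ref{r.corresp} together with Lemma~\ref{l.ingamma0} controls the trajectory as $t\to-\infty$, i.e.\ near $r=0$. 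None of these results say anything about the behaviour of $\bs{\phi}(t;d)$ as $t\to+\infty$. A positive solution of~\eqref{eq.pla} on $[0,+\infty)$ need not have fast decay $u\sim r^{-(n-p)/(p-1)}$: it could a priori have slow decay $u\sim r^{-(n-p)/p}$ (trajectory converging to the interior equilibrium $\EE$), or the trajectory could remain in the fourth quadrant without converging at all. In the slow-decay case the boundary term tends to $\HH(\EE;+\infty)\neq 0$, not to $0$. Ruling out these alternatives --- showing that, with $\HH$ bounded away from $0$ from below and forced to stay outside the homoclinic region $\bs{\Gamma_{t}}$, the trajectory must eventually enter $\{\dot x<0\}$, remain there, and hit the negative $y$-axis in finite time --- is the genuine content of the cited references and is not supplied here. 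The final paragraph of your argument, establishing the sign information and $y(T(d);d)<0$ once $T(d)<+\infty$ is known, is fine.
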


For the proof, we refer to~\cite{DN,KN} for the Laplacian operator and  to~\cite{GHMY,KNY} for $p$-Laplacian extensions. Then, following~\cite[Theorem 4.2]{FrancaCAMQ}, we can prove the continuity of the function $R(d)$.

\begin{proposition}\label{p.Rcontinua}
  Assume $\Hinstab$,  then the set $J$ introduced in \eqref{defJ} is open and the function
  $R: J \to \R$ is continuous.
\end{proposition}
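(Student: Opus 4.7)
The plan is to show that at $t=T_0:=\ln R(d_0)$ the orbit $\bs{\phi}(t;d_0)$ crosses the $y$-axis transversally, then apply the weak implicit function theorem (Theorem~\ref{diniDebole}) to locate the perturbed zeros, and finally exploit the local unstable manifold theory to exclude the appearance of any earlier zero for $d$ close to $d_0$.

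Fix $d_0 \in J$. By definition, $x(T_0;d_0)=0$ and $x(t;d_0)>0$ for $t<T_0$. I first claim that $y(T_0;d_0)\ne 0$: otherwise the $C^1$ flow of~\eqref{sist-din} (which is $C^1$ under the standing assumption $\frac{2n}{n+2}\le p\le 2$, cf.\ Remark~\ref{r.smooth}) would reach the equilibrium $(0,0)$ at the finite time $T_0$, forcing $\bs{\phi}(\cdot;d_0)\equiv(0,0)$ by Cauchy uniqueness, in contradiction with $d_0>0$. Hence
\[
\dot{x}(T_0;d_0)= y(T_0;d_0)\,|y(T_0;d_0)|^{\frac{2-p}{p-1}}\ne 0,
\]
so $\bs{\phi}(\cdot;d_0)$ meets $\{x=0\}$ transversally at $t=T_0$.

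Setting $A(t,d):=x(t;d)$, continuous dependence of the flow on parameters yields that $A$ and $\partial_t A=\dot x$ are continuous on a neighbourhood of $(T_0,d_0)$. Theorem~\ref{diniDebole} then produces $\delta,\eta>0$ and a unique continuous function $T:{}]d_0-\delta,d_0+\delta[{}\to{}]T_0-\eta,T_0+\eta[{}$ satisfying $T(d_0)=T_0$ and $x(T(d);d)\equiv 0$. It remains to verify that $T(d)$ is actually the \emph{first} zero of $x(\,\cdot\,;d)$, so that $d\in J$ and $R(d)=\eu^{T(d)}$.

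For this, I choose $\tau_*\ll T_0$ such that $\bs{\phi}(\tau_*;d_0)\in W^u_{\lloc}(\tau_*)\setminus\{\bs{0}\}$ lies strictly inside the ball $B(\bs{0},\delta)$ of Theorem~\ref{local.mani}; by the characterisation of $W^u_{\lloc}$ in Theorem~\ref{local.mani1} this forces $x(t;d_0)>0$ for every $t\le\tau_*$. By Remark~\ref{r.corresp} the map $d\mapsto\bs{\phi}(\tau_*;d)$ is continuous, so $\bs{\phi}(\tau_*;d)$ still belongs to $W^u(\tau_*)\cap B(\bs{0},\delta)=W^u_{\lloc}(\tau_*)$ for $d$ sufficiently close to $d_0$, yielding $x(t;d)>0$ on ${}]-\infty,\tau_*]$. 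On the compact set $[\tau_*,T_0-\eta]$ the quantity $x(\,\cdot\,;d_0)$ is bounded below by a positive constant, and a further shrinking of $\delta$ preserves this positivity for $x(\,\cdot\,;d)$. Patching these two regions with the interval $(T_0-\eta,T_0+\eta)$ supplied by the implicit function theorem identifies $T(d)$ with the first zero of $x(\,\cdot\,;d)$, proving openness of $J$ and continuity of $R=\eu^T$. The main obstacle is precisely this backward control: without the exponential decay provided by the local unstable manifold, one could not rule out spurious earlier zeros accumulating as $t\to-\infty$, but Theorem~\ref{local.mani1} is tailor-made to handle it.
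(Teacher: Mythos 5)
Your approach — transversality of the crossing with $\{x=0\}$, followed by the weak implicit function theorem (Theorem~\ref{diniDebole}) and a backward control to rule out spurious earlier zeros — is exactly the ``standard transversality argument'' the paper alludes to (it simply cites~\cite[Theorem 4.2]{FrancaCAMQ} and gives no further details), so the overall structure is correct. Your observation that $y(T_0;d_0)\neq 0$ must be established from backward uniqueness (since $\Hinc$, hence Proposition~\ref{p.classical.Poho}, is not assumed here) is a genuine and necessary point, and your uniqueness argument is sound because the vector field is $C^1$ in the regime $\tfrac{2n}{n+2}\le p\le 2$.

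One step is stated imprecisely: the identity $W^u(\tau_*)\cap B(\bs{0},\delta)=W^u_{\lloc}(\tau_*)$ is not true in general, since the globalized leaf $W^u(\tau_*)$ may re-enter $B(\bs{0},\delta)$ after leaving it. What you actually need is that $\QQQ_{\bs{\tau_*}}(d)\in W^u_{\lloc}(\tau_*)$ for $d$ near $d_0$, which follows from the parametrization of Remark~\ref{r.corresp}: $\QQQ_{\bs{\tau_*}}^{-1}\big(W^u_{\lloc}(\tau_*)\big)$ is a connected interval $I\ni 0$, and choosing $\tau_*$ small enough so that $\sup_{t\le\tau_*}\|\bs{\phi}(t;d_0)\|<\delta$ puts $d_0$ in the interior of $I$. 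This is a repairable gap, not a fatal one. It is also worth noting that the unstable-manifold machinery can be bypassed entirely for the backward control: by Remark~\ref{r.decri} regular solutions are decreasing until their first zero, and $u(0;d)=d>0$ with continuous dependence of $u(r;d)$ on $(r,d)$ down to $r=0$ already guarantees $x(t;d)>0$ for $t\le\tau_*$ uniformly for $d$ in a neighbourhood of $d_0$, which yields the same conclusion with less machinery.
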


We need two further asymptotic results, which can already be found in literature in slightly different contexts.

\begin{proposition}\label{p.da0}
  Assume $\Hinstab$, and $\inf J=0$,
 then $\lim_{d \to 0^+}R(d)= +\infty$.
\end{proposition}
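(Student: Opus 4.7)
The natural line of attack is a direct ODE estimate by contradiction. Suppose that there exist $R^{\star}>0$ and a sequence $d_n \to 0^+$ with $d_n \in J$ satisfying $R(d_n) \le R^{\star}$ for all $n$. The plan is to exploit the fact that, for small $d$, the solution $u(\cdot;d)$ stays so close to the constant $d$ that it cannot decrease all the way to $0$ before reaching a large radius.

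First, by Remark~\ref{r.decri}, $u(\cdot;d_n)$ is monotone decreasing on $[0,R(d_n)]$, so $0\le u(r;d_n)\le d_n$ on that interval. Combined with the continuity of $\K$ on $[0,R^{\star}]$, which gives a uniform bound $\K(r)\le M$, a first integration of equation~\eqref{eq.pla} from $0$ to $r\le R(d_n)$ (using $u'(0;d_n)=0$) yields
\[
r^{n-1}|u'(r;d_n)|^{p-1} \le M\,d_n^{q-1}\,\frac{r^n}{n},
\]
hence
\[
|u'(r;d_n)| \le \left(\frac{M}{n}\right)^{1/(p-1)} d_n^{(q-1)/(p-1)}\,r^{1/(p-1)}.
\]
A second integration together with the telescoping identity $d_n = u(0;d_n) - u(R(d_n);d_n) = \int_0^{R(d_n)}|u'(s;d_n)|\,ds$ produces
\[
d_n \le C\, d_n^{(q-1)/(p-1)}\, R(d_n)^{p/(p-1)},
\]
for some constant $C=C(p,n,M)$. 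Since $q=p^{\ast}>p$, the exponent $(q-p)/(p-1)$ is strictly positive, and rearranging gives
\[
R(d_n)^{p/(p-1)} \ge C^{-1}\, d_n^{-(q-p)/(p-1)} \longrightarrow +\infty
\]
as $d_n \to 0^+$, contradicting $R(d_n)\le R^{\star}$.

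There is no really delicate obstacle here: the only point to watch is that the bound $M$ on $\K$ depends on the hypothetical $R^{\star}$, but this is harmless since $R^{\star}$ is precisely what we are contradicting. The hypotheses $\Hinstab$ and $\inf J=0$ play no role in the estimate itself, beyond ensuring that $R(d)$ is well defined for every sufficiently small $d>0$. A more conceptual alternative would be the rescaling $v(r;d):=u(r;d)/d$, which solves the $p$-Laplace equation with coefficient $d^{q-p}\K(r)$ and initial data $v(0)=1$, $v'(0)=0$; as $d\to 0^+$ this coefficient vanishes, and continuous dependence on the parameter forces $v(\cdot;d)\to 1$ uniformly on any fixed compact interval, which is incompatible with $v(R(d);d)=0$ whenever $R(d)$ remains bounded.
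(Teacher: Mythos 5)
Your proof is correct. The paper does not give a self-contained argument for this proposition; it refers the reader to [KabYY, Proposition~2.4], and the double-integration estimate you give is precisely the standard argument one finds in that line of literature: integrate the equation once from $0$ (using $u'(0)=0$, regularity at the origin, and the bound $0\le u\le d$ from Remark~\ref{r.decri}) to control $|u'|$, integrate again to control the total drop of $u$, and observe that since $q=p^{*}>p$ the exponent $(q-p)/(p-1)$ is strictly positive, forcing $R(d)\to+\infty$ as $d\to 0^{+}$. So in substance you are reconstructing the argument of the cited reference rather than taking a genuinely different route.

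Two small comments. First, your observation that $\Hinstab$ plays no role in the estimate is correct: it belongs to the standing framework (unstable-manifold theory for the Fowler system) and is not needed here; what the computation actually uses is the continuity of $\K$ near the origin (which gives the bound $M$ on $[0,R^{\star}]$), the existence, uniqueness, and $C^1$ regularity of the regular solution $u(\cdot;d)$ with $u'(0)=0$, and the monotonicity of Remark~\ref{r.decri}. The hypothesis $\inf J=0$ is only needed so that $d\to 0^{+}$ along points of $J$ makes sense; your contradiction argument, which starts from a sequence $d_n\in J$ with $d_n\to 0^{+}$, handles this correctly. Second, the rescaling $v=u/d$ you sketch at the end is an equally valid and arguably more transparent alternative, and the computation (the coefficient becomes $d^{\,q-p}\K$, which vanishes as $d\to 0^{+}$) is correct; either route closes the proof.
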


We refer to~\cite[Proposition 2.4]{KabYY} for the proof.
With some effort, we can generalize the previous proposition a bit.

\begin{proposition}\label{p.daD}
  Assume $\Hinstab$ and the existence of $\tilde D>0$, with $\tilde D\notin J$ which is an accumulation point for $J$. Then,
$$\lim_{\scriptsize \begin{array}{l}d\! \to \!{\tilde D}\\d\in J\end{array}}\!\!R(d)= +\infty.$$
\end{proposition}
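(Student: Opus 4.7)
The plan is to combine the continuity of the parametrization $d \mapsto \bs{\phi}(\tau_0;d)$ of the unstable leaf $W^u(\tau_0)$ (Remark~\ref{r.corresp}) with classical continuous dependence on initial data for the system~\eqref{sist-din} on a \emph{forward} time interval.

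Given any sequence $(d_n)_n \subset J$ with $d_n \to \tilde D$, the first step would be to extract information from $\tilde D \notin J$. Since regular solutions $u(r;d)$ are globally defined for every $d > 0$ by the references recalled after~\eqref{eq.pla}, the hypothesis $\tilde D \notin J$ forces $u(r;\tilde D)$ to have no zero, and so $u(r;\tilde D) > 0$ for every $r \geq 0$. Equivalently, the associated trajectory $\bs{\phi}(t;\tilde D)$ of~\eqref{sist-din} satisfies $x(t;\tilde D) > 0$ for every $t \in \R$.

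Next, I would fix an arbitrary $T \in \R$ and some $\tau_0 \leq T$, and invoke Remark~\ref{r.corresp} to obtain that $d \mapsto \QQQ_{\bs{\tau_0}}(d) = \bs{\phi}(\tau_0;d)$ is continuous, so that $\bs{\phi}(\tau_0;d_n) \to \bs{\phi}(\tau_0;\tilde D)$ in $\R^2$. Classical continuous dependence on initial data for the $C^1$ system~\eqref{sist-din}, combined with the existence of $\bs{\phi}(\cdot;\tilde D)$ on the compact interval $[\tau_0,T]$, would then give $\bs{\phi}(t;d_n) \to \bs{\phi}(t;\tilde D)$ uniformly on $[\tau_0,T]$. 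In particular $x(T;d_n) \to x(T;\tilde D) > 0$, so $x(T;d_n) > 0$ for $n$ large. Since $d_n \in J$, by definition of $R(d_n)$ as the first zero of $u(\cdot;d_n)$ we have $x(t;d_n) > 0$ exactly for $t < T(d_n)$ with $T(d_n) = \ln R(d_n)$; hence $T(d_n) > T$, i.e.\ $R(d_n) > \eu^T$. Letting $T \to +\infty$ and recalling the sequence was arbitrary would complete the argument.

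The main subtlety is that $d$ is not a standard initial datum for~\eqref{sist-din}, but rather parametrizes the asymptotic behaviour of the trajectory at $t = -\infty$ via the unstable leaf $W^u$. A naive continuous-dependence argument starting from $t = -\infty$ would be hindered by the singularity of the flow at the equilibrium $(0,0)$; Remark~\ref{r.corresp} is what overcomes this obstacle, providing continuity in $d$ at every finite pivot time $\tau_0$ and thereby reducing the proof to a standard forward-in-time continuous-dependence argument on a compact interval.
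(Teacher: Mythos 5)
Your approach runs parallel to the paper's, but there is a genuine gap at the final inference. From $x(T;d_n)>0$ you conclude $T(d_n)>T$ by asserting ``$x(t;d_n)>0$ \emph{exactly} for $t<T(d_n)$''. This equivalence is not available: the definition of $R(d_n)$ as the \emph{first} zero only gives $x(t;d_n)>0$ for $t<T(d_n)$ (together with $x(T(d_n);d_n)=0$), and after $T(d_n)$ the trajectory enters $x<0$ (since $y<0$ there, by Proposition~\ref{p.classical.Poho}) but may well re-enter $x>0$ later. So $x(T;d_n)>0$ alone does not force $T<T(d_n)$. You can fix part of this for free, since you actually have \emph{uniform} convergence on $[\tau_0,T]$: the compactness of $[\tau_0,T]$ and the positivity of $x(\cdot;\tilde D)$ give $\min_{[\tau_0,T]}x(\cdot;\tilde D)=2\ep>0$, so $x(t;d_n)>\ep$ on all of $[\tau_0,T]$ for $n$ large, ruling out a first zero in $[\tau_0,T]$.

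What remains, and is missing in your argument, is ruling out $T(d_n)<\tau_0$. Since $\tau_0$ was taken arbitrary, you have no control over the trajectories for $t<\tau_0$: the continuity of $\QQQ_{\bs{\tau_0}}$ tells you $\bs\phi(\tau_0;d_n)\to\bs\phi(\tau_0;\tilde D)$, which is compatible with the solution having already vanished and turned negative before $\tau_0$. The paper closes this by \emph{choosing} $\tau_0$: using Theorem~\ref{local.mani} it takes $\tau_0\ll 0$ so small that $\{\QQQ_{\bs{\tau_0}}(d)\mid d\in[0,\tilde D+1]\}\subset W^u_{\lloc}(\tau_0)\subset B(\bs 0,\delta)$, which forces $x(t;d)>0$ for every $t\le\tau_0$ and every $d\le\tilde D+1$, hence $R(d)>\eu^{\tau_0}$ a priori. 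Once you add this choice of $\tau_0$ (and replace the pointwise statement at $t=T$ by the uniform one on $[\tau_0,T]$), your argument coincides with the paper's contradiction argument with $M$ in place of $T$.
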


\begin{proof}
 According to Remark~\ref{r.decri},  $u(r,\tilde{D})$ is positive and decreasing for any $r \ge 0$, since $\tilde{D}\notin J$.
Let  $\delta>0$ be as in Theorem~\ref{local.mani}, and choose  $\tau_0 \ll 0$ so that
 $$\tilde{W}^u(\tau_0):= \{ \QQQ_{\bs{\tau_0}}(d) \mid d \in [0, \tilde{D}+1]\} \subset  W^u_{\lloc}(\tau_0) \subset B(\bs{0},\delta),$$
 where $\QQQ_{\bs{\tau_0}}(d)$ is as in Remark~\ref{r.corresp}.
By construction, $R(d)>\eu^{\tau_0}$ for any $0<d \le \tilde{D}+1$.

\emph{Assume by contradiction that there are $d_n \in J$, $d_n \to \tilde{D}$, and $M \in \R$ such that
$R(d_n) \le \eu^M$.}
Let us denote by  $\ep= \frac{1}{2}\min \{x(t;\tilde{D}) \mid t \in [\tau_0, M] \}$.

 From the continuity of the parametrization $\QQQ_{\bs{\tau_0}}(\cdot)$, for any $\sigma>0$ we can find
$\bar{n}=\bar{n}(\sigma)>0$ so that
  $\|\QQQ_{\bs{\tau_0}}(d_n)-\QQQ_{\bs{\tau_0}}(\tilde{D})\| < \sigma$ when $n > \bar{n}$.
  Then, using the continuity of the flow of~\eqref{sist-din}, we can choose $\sigma=\sigma(\ep)>0$  small enough (and $\bar{n}>0$ large enough) so that
 $$ \|\bs{\phi}(t;d_n) - \bs{\phi}(t;\tilde{D})\| =\|\bs{\phi}(t; \tau_0, \QQQ_{\bs{\tau_0}}(d_n)) - \bs{\phi}(t;\tau_0, \QQQ_{\bs{\tau_0}}(\tilde{D}))\| < \ep  $$
  for any $t \in [\tau_0,M]$,
  whenever $n >\bar{n}$.
  Hence, we get
$$x(t;d_n)\ge x(t;\tilde{D}) - |x(t;d_n) - x(t;\tilde{D})| > 2\ep -\ep=\ep , $$
for any $t \in [\tau_0,M]$ and $n>\bar{n}$. Hence
$x(t;d_n)>0$ for any $t \le M$ and, consequently,
$R(d_n)>\eu^M$  for any $n >\bar{n}$: a contradiction.
The Proposition is thus proved.
\end{proof}
Let us observe also that
 $u(r;\tilde{D})$ is a Ground State and converges to $0$ as $r \to +\infty$.

\section{The asymptotic estimate of $\bs{R(d)}$ for $\bs{d}$ large.}
Now we proceed to study the behavior of $R(d)$ for $d$ large. We emphasize that the asymptotic estimates of the first zero for large initial data are mainly  based on
the crucial assumption  $\Hell$.

Our first step is to locate $\QQ_{\linea}(\tau)$ through
the function $\HH$, see Proposition~\ref{p.H0} and Remark~\ref{r.La}. Then, we use
 a Gr\"{o}nwall's argument to get a  lower and an upper bound
of the time $T(\tau, \linea)$ taken by $\bs{\phi}(t; \tau, \QQ_{\linea}(\tau))$ to cross the $y$ negative semi-axis,
see Lemmas~\ref{l.est.time} and~\ref{l.est.T1}.
 Finally, we prove the
asymptotic estimates of $R(d)$ in Proposition~\ref{p.sub}
 (for the $\ell < \ell^*_{p}$ case)   and in Propositions~\ref{p.parziale} and~\ref{p.parzialebis}
 (for the $\ell \ge \ell^*_{p}$ case).

We emphasize that assumption $\Hell$ implies that the function $\K$ is strictly increasing in a neighborhood of $r=0$, so  we can use
 the following  {\em truncation argument}.

\begin{remark}
\label{r.k}
From $\Hell$ we see that
 there exists $\hat{T}_0<0$  such that $K(\hat{T}_0)<A+1$ and
\begin{equation}
\label{tzero}
0<\frac{1}{2}B\ell\eu^{\ell t}<\dot{K}(t)<2B\ell\eu^{\ell t} \mbox{ for every }  t \le \hat{T}_0.
\end{equation}
So, we can find a function $\hat{K}:\R \to \R$ of class $C^1$ satisfying $\hat{K}=K$ in the interval ${}]-\infty,\hat{T}_0]$,
$\frac{d\!{\hat K}}{dt}(t)>0$ for every $t\in\R$, and
$$
A+1=\hat K(+\infty) := \lim_{t\to+\infty} \hat{K}(t) \in\R\,.
$$
In particular,  $\hat{K}$ satisfies $\Hell$, $\Hinc$ and $\Hstab$. Such a truncation argument will permit us to assume implicitly the validity of the previous hypotheses, when we will look for properties of system~\eqref{sist-din} in a neighborhood of $t=-\infty$ in the presence of the only hypothesis $\Hell$.
\end{remark}

\medbreak
Now, we provide some estimates of the energy $\HH(\QQ_\linea(\tau);\tau)$, where $\QQ_\linea(\tau)$ is as  in Remark~\ref{r.zerotoo}.

\begin{proposition}\label{p.H0}
 Assume $\Hell$.
Let $\linea$ be a small enough segment, transversal to $\bs{\Gamma_{-\infty}}$, and consider
 the point $\QQ_\linea(\tau)$, with $\tau<-N(\linea)$.
Then, there is a constant $\tilde{c}(\linea)>0$ such that
\begin{equation}\label{e.H0}
  \HH(\QQ_\linea(\tau);\tau)=  \tilde{c}(\linea) \, \eu^{\ell \tau} +o(\eu^{\ell \tau}) \qquad \textrm{ as $\tau \to -\infty$}.
\end{equation}
\end{proposition}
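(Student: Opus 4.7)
The plan is to write $\HH(\QQ_\linea(\tau);\tau)$ as an integral by integrating the derivative formula~\eqref{Hder} along the trajectory $\bs{\phi}(t):=\bs{\phi}(t;\tau,\QQ_\linea(\tau))$, and then to read off the leading $\eu^{\ell\tau}$ behaviour from $\Hell$. First, I would observe that $\bs{\phi}(t)\in W^u(t)$ for every $t\in\R$ by Remark~\ref{r.corresp}, so $\bs{\phi}(t)\to (0,0)$ as $t\to-\infty$; combined with the boundedness of $K$ coming from $\Hell$ and the continuity of $\HH$, this yields $\HH(\bs{\phi}(t);t)\to 0$. Integrating~\eqref{Hder} from $-\infty$ to $\tau$ then gives
\[
\HH(\QQ_\linea(\tau);\tau) \;=\; \int_{-\infty}^\tau \dot K(s)\,\frac{x(s)^q}{q}\,ds,
\]
and convergence of this integral for $\tau\le\hat T_0$ follows by pairing $\dot K(s)\le 2B\ell\,\eu^{\ell s}$ from Remark~\ref{r.k} with the uniform bound $|x(s)|\le c(\linea)\,\eu^{\alpha(s-\tau)}$ from Lemma~\ref{l-stim}. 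Since $\alpha q=n$, the integrand is dominated by a constant times $\eu^{-n\tau}\eu^{(\ell+n)s}$, which is integrable on $(-\infty,\tau]$.

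Next, I would substitute $s=\theta+\tau$ with $\theta\le 0$ to obtain
\[
\eu^{-\ell\tau}\,\HH(\QQ_\linea(\tau);\tau) \;=\; \int_{-\infty}^{0} \bigl[\eu^{-\ell\tau}\dot K(\theta+\tau)\bigr]\,\frac{x(\theta+\tau)^q}{q}\,d\theta,
\]
and apply dominated convergence as $\tau\to-\infty$. From $\Hell$ one has $\dot K(s)=B\ell\,\eu^{\ell s}+\eu^s h'(\eu^s)$ with $\eu^s h'(\eu^s)=o(\eu^{\ell s})$ at $-\infty$, so the bracket converges pointwise to $B\ell\,\eu^{\ell\theta}$ for each fixed $\theta\le 0$. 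Lemma~\ref{l.vicine} supplies the uniform convergence of $\bs{\phi}(\theta+\tau;\tau,\QQ_\linea(\tau))$ to $\bs{\phi_{-\infty}}(\theta;0,\QQ_\linea(-\infty))$ on $\theta\in (-\infty,0]$, and hence of $x(\theta+\tau)^q$ to $x^*(\theta)^q$, where $x^*(\theta)$ denotes the first coordinate of $\bs{\phi_{-\infty}}(\theta;0,\QQ_\linea(-\infty))$. The same estimates used above yield a $\tau$-independent dominating function of the form $C\,\eu^{(\ell+n)\theta}$, integrable since $\ell+n>0$.

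Dominated convergence then identifies the limit as
\[
\tilde c(\linea) \;:=\; \int_{-\infty}^{0} B\ell\,\eu^{\ell\theta}\,\frac{x^*(\theta)^q}{q}\,d\theta,
\]
and the equivalence $\eu^{-\ell\tau}\HH(\QQ_\linea(\tau);\tau)\to\tilde c(\linea)$ is precisely~\eqref{e.H0}. Positivity of $\tilde c(\linea)$ is clear: $\bs{\phi_{-\infty}}(\cdot;0,\QQ_\linea(-\infty))$ traces a branch of the homoclinic $\bs{\Gamma_{-\infty}}$ defined in~\eqref{gammatau}, which lies in $\{x>0\}$, so $x^*(\theta)>0$ for every $\theta\le 0$, while $B,\ell>0$ by $\Hell$. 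The main technical obstacle I anticipate is ensuring that dominated convergence is applied cleanly: both the $\dot K$-bound from Remark~\ref{r.k} and the exponential estimate from Lemma~\ref{l-stim} must be valid on a common range of the variables, and the assumption that $\linea$ be sufficiently small is precisely what allows $\QQ_\linea(\tau)$ to be the first transversal intersection and keeps $\tilde{W}^u_\linea(\tau)$ within reach of Lemma~\ref{l-stim}.
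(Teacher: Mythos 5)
Your proposal is correct and follows essentially the same route as the paper's proof: both express $\HH(\QQ_\linea(\tau);\tau)$ as $\int_{-\infty}^0 \dot K(\theta+\tau)\,\frac{x(\theta+\tau)^q}{q}\,d\theta$, use Remark~\ref{r.k} and Lemma~\ref{l-stim} for an integrable dominating function, and use Lemma~\ref{l.vicine} and~\eqref{homosapiens} to pass to the limit via dominated convergence and identify $\tilde c(\linea)=\frac{B\ell}{q}\int_{-\infty}^0\eu^{\ell\theta}x^*(\theta)^q\,d\theta$. The only cosmetic difference is that you apply dominated convergence once to the full normalized integral, whereas the paper splits it into $I_1+I_2$ and treats the $h'$-contribution separately; your derivation of $\HH(\bs{\phi}(t);t)\to 0$ directly from $\bs{\phi}(t)\to(0,0)$ and boundedness of $K$ is, if anything, cleaner than the paper's citation of Lemma~\ref{l.basic}, which formally carries the extra hypothesis $\Hinc$.
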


\begin{proof}
We write for brevity
$\bs{\xi_\tau}(t)=(x_\tau(t),y_\tau(t))= \bs{\phi}(t+\tau; \tau, \QQ_\linea(\tau))$ and $\bs{\xi^*}(t)=(x^*(t),y^*(t))= \bs{\phi_{-\infty}}(t; 0,  \QQ_\linea(-\infty))$.
Notice that, according to Lem\-ma~\ref{l.vicine}, $x_\tau$ is positive in $]-\infty,0]$.

 Since $\QQ_\linea(\tau) \in W^u(\tau)$, from Lemma~\ref{l.basic} combined with Remark~\ref{r.corresp} we know that $\displaystyle{\lito \HH(\bs{\xi_\tau}(t);t+\tau)=0}$.

\noindent Hence, in the spirit of~\cite[pag. 357]{FrancaFE}, by~\eqref{Hder} and Remark~\ref{r.k} we find
\begin{equation}\label{I0}
      \begin{split}
        \HH(\QQ_\linea(\tau);\tau)=  &
        \int_{-\infty}^0 \dot K(t+\tau) \frac{x_\tau(t)^q}{q} \, dt =\\
          = & \int_{-\infty}^0 \left[B\ell \eu^{\ell(t+\tau)} +
          h'(\eu^{t+\tau})\eu^{t+\tau}
          \right] \frac{x_\tau(t)^q}{q} \, dt =
          \eu^{\ell \tau} \left( I_1+I_2  \right)\,,
      \end{split}
    \end{equation}
    where
    $$ I_1:= \frac{B\ell}{q} \int_{-\infty}^0  \eu^{\ell t} x_\tau(t)^q \, dt \quad\mbox{ and } \quad  I_2:=
\frac1q \int_{-\infty}^0 \frac{
h'(\eu^{t+\tau})\eu^{t+\tau}}{\eu^{\ell (t+\tau)}}\, \eu^{\ell t} x_\tau(t)^q \, dt.$$
We can rewrite $I_1$ in the following equivalent form:
   $$   I_1 =
      \frac{B\ell}{q}  \int_{-\infty}^0 \eu^{\ell t} x^*(t)^q\, dt
      + \frac{B\ell}{q}  \int_{-\infty}^0 \eu^{\ell t} [x_\tau(t)^q - x^*(t)^q]\, dt .$$
     Recalling~\eqref{vicine} and the fact that both $x_\tau(t)$, $x^*(t)$ converge to $0$ exponentially as $t \to -\infty$ as observed in Lemma~\ref{l-stim} and in~\eqref{homosapiens}, we apply the Lebesgue theorem
to deduce the existence of $\omega_1(t)$ such that
\begin{equation}\label{I1}
I_1=\tilde c(\linea)+ \omega_1(\tau)\hspace{3mm} \mbox{ with } \hspace{3mm} \lim_{\tau\to-\infty}\omega_1(\tau) =0\,,
\end{equation}
where
$\tilde c(\linea): = \frac{B\ell}{q}  \int_{-\infty}^0 \eu^{\ell t} x^*(t)^q\, dt >0
$
is finite, due to~\eqref{homosapiens}.

 Analogously, from $\Hell$,
we find $\omega_2(t)$ such that
 \begin{equation}
\label{I2}
I_2=\omega_2(\tau)\hspace{3mm} \mbox{ with } \hspace{3mm} \lim_{\tau\to-\infty}\omega_2(\tau) =0\,.
    \end{equation}
The thesis follows by plugging~\eqref{I1} and~\eqref{I2} in~\eqref{I0}.
\end{proof}

For a fixed $a>0$, let $\linea(a)$ denote the segment  of  $y=-a$ lying between the negative $y$ semi-axis and the isocline $\dot{x}=0$ of system~\eqref{sist-din}, i.e.
\begin{equation}\label{defLa}
  \linea(a)= \{ (s, -a) \mid 0 \le s \le  \tilde{X}(a) \} ,\quad \textrm{ where $\tilde{X}(a):=\tfrac{1}{\alpha}\, {a^{\frac{1}{p-1}}}$}\,.
\end{equation}
Recalling the definition of equilibrium point $\EE(-\infty)$  given in~\eqref{EE}, we observe that for any $0<a<|E_y(-\infty)|$ the homoclinic orbit $\bs{\Gamma_{-\infty}}$ intersects $\linea(a)$ transversely.
From Lemma~\ref{l.vicine} and Proposition~\ref{p.H0}, we get the following asymptotic result.

 \begin{remark}\label{r.La}
 Assume $\Hell$.
Follow $W^u(\tau)$ from the origin towards $x>0$. Then, for any $0<a<|E_y(-\infty)|$
 we can find $N(a)>0$ such that
 $W^u(\tau)$ intersects $\linea(a)$ transversely  in a point, say $\QQ(\tau,a):= \QQ_{\linea(a)}(\tau)=(Q_x(\tau,a),-a)$, whenever $\tau<-N(a)$.
 From~\eqref{defLa} we see that
  $$\dot{y}(\tau;\tau, \QQ(\tau,a))>0>\dot{x}(\tau;\tau, \QQ(\tau,a)), \quad \mbox{for any } \tau \le -N(a).$$
 Moreover, there is a constant $c(a)>0$ such that
 \begin{equation}\label{e.Ha}
  \HH(\QQ(\tau,a);\tau)= c(a) \, \eu^{\ell \tau} +o(\eu^{\ell \tau}) \qquad \textrm{ as $\tau \to -\infty$}\,.
 \end{equation}
 \end{remark}

In the next part of this section
we study of the asymptotic behavior of the solutions,
under the more restrictive  assumptions $\Hinc$, $\Hell$ and $\Hstab$. Finally, $\Hinc$ and $\Hstab$ will be removed
using the truncation argument suggested by Remark~\ref{r.k}.

\begin{lemma}\label{l.dots}
  Assume $\Hinc$. Let  $u(r;d)$ be a regular solution of~\eqref{eq.pla} and let
  $\bs{\phi}(t;d)$ be the corresponding trajectory   of~\eqref{sist-din}. Then, there are
  $T_x(d)<T(d)$ such that $x(t;d)>0$ when $t<T(d)$ and it becomes null at $t=T(d)$; furthermore, $\dot{x}(t;d)>0$ when $t< T_x(d)$, and $\dot{x}(t;d)<0$ when $T_x(d)<t\leq T(d)$.
\end{lemma}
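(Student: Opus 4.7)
The plan is to show that $\dot{x}(\cdot;d)$ has a unique zero $T_x(d)$ in $(-\infty,T(d))$, at which it crosses strictly from positive to negative.

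To begin, I would invoke Proposition~\ref{p.classical.Poho}: under $\Hinc$ the time $T(d)=\ln R(d)$ is finite, $y(t;d)<0<x(t;d)$ for every $t<T(d)$, and $x(T(d);d)=0>y(T(d);d)$. Since $y<0$ throughout $(-\infty,T(d)]$, the first equation of~\eqref{sist-din} can be written as $\dot{x}=\alpha x-|y|^{1/(p-1)}$. Two boundary signs are then immediate: $\dot{x}(T(d);d)=-|y(T(d);d)|^{1/(p-1)}<0$; on the other hand, by Remark~\ref{r.corresp} together with Theorem~\ref{local.mani}, for $t\ll 0$ the trajectory lies on $W^u_{\lloc}(\tau)$, which is tangent to the $x$-axis (or to the line $y=-(K(-\infty)/n)x$ in the edge case $p=\frac{2n}{n+2}$), so $|y|/x^{p-1}\to 0$ as $t\to -\infty$ (using $p\le 2$), and hence $\dot{x}(t;d)>0$ for $t$ sufficiently negative. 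Continuity of $\dot x$ then yields at least one zero $T_x(d)\in(-\infty,T(d))$.

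The core step, and probably the only non-routine one, is uniqueness. At any zero $t^*$ of $\dot{x}(\cdot;d)$ in $(-\infty,T(d))$ one has $y(t^*;d)=-\alpha^{p-1}x(t^*;d)^{p-1}$; substituting this into~\eqref{Hq} and simplifying gives
$$
\HH\bigl(\bs{\phi}(t^*;d);t^*\bigr)=-\frac{\alpha^p}{p}\,x(t^*;d)^p+\frac{K(t^*)}{q}\,x(t^*;d)^q.
$$
By Lemma~\ref{l.basic} the left-hand side is non-negative, hence $K(t^*)\,x(t^*;d)^{q-p}\ge (q/p)\alpha^p$, and since $q>p$ this forces
$$
x(t^*;d)\ \ge\ (q/p)^{1/(q-p)}\,E_x(t^*)\ >\ E_x(t^*),
$$
with $E_x$ as in~\eqref{EE}. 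Plugging $y(t^*;d)=-\alpha^{p-1}x(t^*;d)^{p-1}$ into the second equation of~\eqref{sist-din} then gives $\dot{y}(t^*;d)=\alpha^p x(t^*;d)^{p-1}-K(t^*)x(t^*;d)^{q-1}<0$. Differentiating $\dot{x}=\alpha x-|y|^{1/(p-1)}$ and using $\dot{x}(t^*;d)=0$ yields
$$
\ddot{x}(t^*;d)=\frac{1}{p-1}\,|y(t^*;d)|^{(2-p)/(p-1)}\,\dot{y}(t^*;d)<0.
$$
So $\dot{x}(\cdot;d)$ is strictly decreasing through every one of its zeros, which rules out a second zero (it would require $\ddot{x}\ge 0$ there). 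Combined with the boundary signs, this isolates a unique $T_x(d)\in(-\infty,T(d))$ with $\dot{x}>0$ on $(-\infty,T_x(d))$ and $\dot{x}<0$ on $(T_x(d),T(d)]$.

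The main obstacle is packaging the uniqueness step cleanly: the key insight is that the Pohozaev-type non-negativity $\HH\ge 0$ granted by $\Hinc$ is \emph{exactly} what is needed to guarantee $x(t^*;d)>E_x(t^*)$ at any critical point of $x(\cdot;d)$, while the strict gap $q>p$ is what converts this into a strict sign for $\ddot{x}(t^*;d)$.
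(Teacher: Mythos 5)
Your proof is correct and follows essentially the same approach as the paper's: invoke $\Hinc$ via Lemma~\ref{l.basic} to get $\HH\ge 0$ along the trajectory, observe that $\HH$ restricted to the isocline $\dot{x}=0$ is negative for $x$ below (a constant times) $E_x(t)$, and conclude that any critical point of $x(\cdot;d)$ has $x>E_x(t)$, hence $\dot y<0$ and $\ddot x<0$ there. The paper's proof is terser (it states the isocline computation and leaves the boundary signs and the uniqueness-via-$\ddot x<0$ step as "easily deduced"), so your write-up simply fills in the same argument in more detail.
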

 \begin{proof}
 The existence of $T(d)$ follows from Proposition~\ref{p.classical.Poho}. From Lemma~\ref{l.basic}, we know that   $\HH( \bs{\phi}(t;d); t) \ge 0$ for any $t \leq T(d)$.
By a simple calculation, $\HH(\QQ;t) <0$ for every $\QQ=(Q_x,Q_y)$ in the isocline $\dot{x}=0$ with $0<Q_x\leq E_x(t)$, so we easily deduce the existence of
$T_x(d)<T(d)$ such that $\dot{x}(T_x(d);d)=0$ and $x(T_x(d);d)>E_x(T_x(d))$.
 \end{proof}

Assume $\Hell$ and $\Hinc$.
 Fixed $0<a<|E_y(-\infty)|$,  consider
 $\linea(a)$ and $N(a)>0$ as in Remark~\ref{r.La} so that
$\QQ(\tau,a)\in W^u(\tau)\cap \linea(a)$ is well defined for any  $\tau<-N(a)$.

From Proposition~\ref{p.classical.Poho}, there exists $T(\tau,a)>\tau$ such that $\bs{\phi}(t; \tau, \QQ(\tau,a))$ crosses the $y$ negative semi-axis at $t=T(\tau,a)$ in a point, say (see Figure~\ref{TT1})
\begin{equation}\label{pointR}
 \bs{R}(\tau,a)=(0, R_y(\tau,a))=\bs{\phi}(T(\tau,a); \tau, \QQ(\tau,a))\,.
 \end{equation}
According to Remark~\ref{r.k}, it is not restrictive to assume that $\dot{K}(t)>0$ for any $t \le \tau \le -N(a)$
(provided that we choose $-N(a)<\hat{T}_0$ in Remark~\ref{r.La}).
So, from Lemma~\ref{l.basic} we deduce that $\HH( \bs{Q}(\tau,a);\tau)>0$.

\begin{figure}[t]
\centering
\includegraphics[scale=0.6]{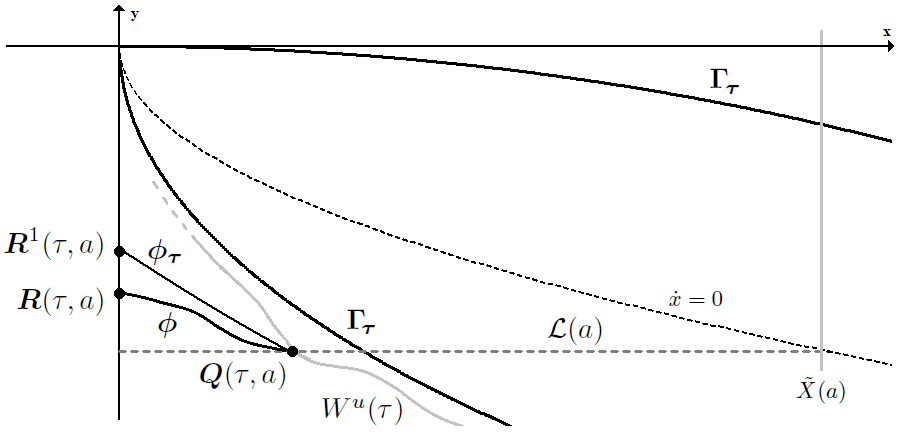}
  \caption{The position of the points $\bs{Q}(\tau,a)$, $\bs{R}(\tau,a)$, and  $\bs{R}^1(\tau,a)$.}
\label{TT1}
\end{figure}

Our next purpose is to provide suitable estimates from above and from below of $T(\tau,a)$.
From Lemma~\ref{l.dots},
 $\bs{\phi}(t; \tau, \QQ(\tau,a))$ is a graph on the $x$-axis when $t \in [\tau, T(\tau,a)]$.
In particular,
  we can find a function $\psi: [0,Q_x(\tau,a)] \to {}]-\infty,0[\,$ such that the image of $\bs{\phi}(\cdot;\tau, \QQ(\tau,a)): [\tau, T(\tau,a)] \to \R^2$ can be parametrized as $(x, \psi(x))$ for $x \in [0,Q_x(\tau,a)]$.

Let us now consider the trajectory $\bs{\phi_{\tau}}(t; \tau, \QQ(\tau,a))$ of the  system~\eqref{sist-frozen} frozen at $t=\tau$.
 Notice that its graph is contained in the level set
\begin{equation}
\label{validor}
  \{(x,y) \mid \HH(x,y; \tau)= \HH( \bs{Q}(\tau,a);\tau)>0\},
\end{equation}
 which   lies in the exterior of the homoclinic orbit $\bs{\Gamma_\tau}$ defined in~\eqref{gammatau}.

Hence, there exists $T^1(\tau,a)>\tau$
 such that $\bs{\phi_{\tau}}(\cdot ; \tau, \QQ(\tau,a))$ lies in the $4^{th}$ quadrant when $t \in [\tau, T^1(\tau,a)[{}$ and it crosses transversely the $y$ negative semi-axis at $t=T^1(\tau,a)$
  in a point, say (see Figure~\ref{TT1})
\begin{equation}\label{pointR1}
 \bs{R}^1(\tau,a)=(0, R^1_y(\tau,a))=\bs{\phi_\tau}(T^1(\tau,a); \tau, \QQ(\tau,a))\,.
\end{equation}

\begin{remark}\label{Tta-continuity}
Assume $\Hinstab$, then the functions $T(\tau,a)$ and $T^1(\tau,a)$ are continuous in their domain since  the flow on the negative $y$-axis is transversal.
\end{remark}

From the study of the trajectories of the autonomous system~\eqref{sist-frozen},
we can deduce that
\begin{equation}
\label{unain+}
\dot{x}_\tau(t;\tau,\QQ(\tau,a))<0<\dot{y}_\tau(t;\tau,\QQ(\tau,a)) \quad \mbox{for any } t\in[\tau,T^1(\tau,a)],
\end{equation}
and, consequently,
we can find a strictly decreasing function $\psi_\tau: [0,Q_x(\tau,a)] \to [-a, R^1_y(\tau,a)]$ such that the image of $\bs{\phi_\tau}(\cdot;\tau, \QQ(\tau,a)): [\tau, T^1(\tau,a)] \to \R^2$ can be parametrized as $(x, \psi_\tau(x))$ for $x \in [0,Q_x(\tau,a)]$.

\begin{lemma}\label{l.Tuno}
Assume $\Hell$ and that $\dot{K}(t)>0$ for any $t \in \R$.
Fix $0<a<|E_y(-\infty)|$,
and let $N(a)>0$ be as in Remark~\ref{r.La}. Then,
$$T(\tau,a)<T^1(\tau,a)\, , \qquad \mbox{for any } \tau<-N(a).$$
\end{lemma}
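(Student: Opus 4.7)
My plan is to compare the two trajectories via the frozen Pohozaev energy $\HH(\cdot;\tau)$. Along $\bs{\phi_\tau}$ this quantity is conserved, while along $\bs{\phi}$ identity~\eqref{ripristi}, combined with the assumption $\dot K>0$, gives
\[
\frac{d}{dt}\HH(\bs{\phi}(t);\tau) = (K(\tau)-K(t))\, x(t)^{q-1}\,\dot x(t),
\]
which is strictly positive whenever $x(t)>0$ and $\dot x(t)<0$. Since both trajectories depart from $\QQ(\tau,a)$ with $\dot x<0<\dot y$ by Remark~\ref{r.La}, this energy drift will force the non-autonomous trajectory to carry a strictly larger value of $\HH(\cdot;\tau)$ than the frozen one at every common $x$-coordinate.

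The key geometric observation is that, for $y<0$, a direct computation yields $\partial_y \HH(x,y;\tau)=\alpha x-|y|^{1/(p-1)}$, so $\HH(x,\cdot;\tau)$ is strictly decreasing on the set below the isocline $\dot x=0$, namely $\{|y|>(\alpha x)^{p-1}\}$. By~\eqref{unain+} the frozen trajectory is a graph $y=y_\tau(x)$ on $[0,Q_x(\tau,a)]$ lying entirely below this isocline. Consequently, as long as $\bs{\phi}$ is also the graph of a function $y=y(x)$ staying below the isocline, the energy excess translates into the strict comparison $y(x)<y_\tau(x)$, equivalently $|y(x)|>|y_\tau(x)|$.

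The main technical step, and the one I expect to require most care, is ruling out that $\bs{\phi}$ itself touches the isocline before hitting the negative $y$-axis. I would argue by contradiction: if $t^*\in(\tau,T(\tau,a))$ is the first instant at which $\dot x(t^*)=0$, then $y(t^*)=-(\alpha x(t^*))^{p-1}$, while the frozen graph satisfies the strict inequality $y_\tau(x(t^*))<-(\alpha x(t^*))^{p-1}$, giving $y(t^*)>y_\tau(x(t^*))$. On the other hand, letting $t\nearrow t^*$ in the strict inequality $y(t)<y_\tau(x(t))$ valid on $(\tau,t^*)$ yields the reverse non-strict inequality $y(t^*)\le y_\tau(x(t^*))$, a contradiction. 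Hence $\dot x<0$ on the whole interval $(\tau,T(\tau,a)]$, and $\bs{\phi}$ is a graph $y=y(x)$ below the isocline with $|y(x)|>|y_\tau(x)|$ for every $x\in(0,Q_x(\tau,a))$.

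To conclude, I parametrize both trajectories by their $x$-coordinate. Below the isocline one has $|\dot x|=|y|^{1/(p-1)}-\alpha x$, so the pointwise comparison of $|y|$ yields $|\dot x(x)|>|\dot x_\tau(x)|$ on $(0,Q_x(\tau,a))$, and therefore
\[
T(\tau,a)-\tau = \int_0^{Q_x(\tau,a)}\frac{dx}{|\dot x(x)|} < \int_0^{Q_x(\tau,a)}\frac{dx}{|\dot x_\tau(x)|} = T^1(\tau,a)-\tau,
\]
which is the claim.
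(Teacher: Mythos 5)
Your proof is correct, and it takes a genuinely different route from the paper in the final comparison step. Both arguments begin the same way: from $\dot K>0$ and \eqref{ripristi}, the frozen energy $\HH(\cdot;\tau)$ is constant along $\bs{\phi_\tau}$ and strictly increasing along $\bs{\phi}$ on the arc with $x>0>\dot x$, which gives the ordering of the graphs $\psi(x)<\psi_\tau(x)$, i.e. $|y(x)|>|y_\tau(x)|$. The divergence comes afterwards. The paper converts this spatial information into a time-domain comparison $x(t)<x_\tau(t)$ on ${}]\tau,T(\tau,a)]$: it seeds the inequality with a third-order Taylor expansion at $t=\tau$ (showing the first non-vanishing derivative of $x-x_\tau$ is negative) and then runs a barrier/contradiction argument using $\psi<\psi_\tau$ at the first alleged touching time $\hat T$; the conclusion then follows because $x(T(\tau,a))=0<x_\tau(T(\tau,a))$. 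You instead stay in the spatial parametrization, observe $|\dot x|=|y|^{1/(p-1)}-\alpha x$ below the isocline, and compare the traversal times by the integral $\int_0^{Q_x}\frac{dx}{|\dot x(x)|}$. Your route dispenses with the third-derivative computation entirely, which is a genuine simplification. The only extra work it requires is ruling out that $\bs{\phi}$ touches the isocline before hitting the $y$-axis; you prove this directly via the $\partial_y\HH$ sign and a limiting argument, whereas the paper already has it for free from Lemma~\ref{l.dots} (applicable here since $\dot K>0$ implies $\Hinc$). Both handle that point correctly, so your argument is self-contained as well as shorter in the main step.
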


 \begin{proof}
For brevity, let us write
$\bs{\phi}(t)=(x(t),y(t))=\bs{\phi}(t;\tau, \QQ(\tau,a))$ and
$\bs{\phi_\tau}(t)=(x_\tau(t),y_\tau(t))=\bs{\phi_\tau}(t;\tau, \QQ(\tau,a))$.

From~\eqref{ripristi} combined with  Lemma~\ref{l.dots},
we see that $\HH(\bs{\phi}(\cdot);\tau)$ is strictly increasing in the interval $[\tau, T(\tau,a)]$. In particular,
recalling~\eqref{validor}, we have
$$
\HH(\bs{\phi}(t); \tau)> \HH(\bs{Q}(\tau,a); \tau)=\HH(\bs{\phi_\tau}(s);\tau)
$$
for every $t\in{}]\tau,T(\tau,a)]$ and $s\in[\tau,T^1(\tau,a)]$.
Thus,
we get
\begin{equation}\label{psisotto}
\psi(x) < \psi_{\tau}(x) \qquad \mbox{ for any } 0 \le x  < Q_x(\tau,a)\,,
\end{equation}
and, as an immediate consequence,
$$
R_y(\tau,a) < R^1_y(\tau,a)\,.
$$

 The  statement of the lemma holds if we prove that $x(t) < x_{\tau}(t)$ for any $t \in {}]\tau,T(\tau,a)]$.
 Observe first that $\bs{\phi}(\tau)=\bs{\phi_{\tau}}(\tau)$. Then, from~\eqref{sist-din} and~\eqref{sist-frozen}
 we find that $x(\tau)$ equals $x_{\tau}(\tau)$ along with its first and second derivatives, however
 $$ \frac{d^3}{dt^3}[x(\tau)-x_{\tau}(\tau)]= \frac{\ddot{y}(\tau)-\ddot{y}_{\tau}(\tau)}{p-1} |y(\tau)|^{\frac{2-p}{p-1} }=-\dot{K}(\tau) \frac{x(\tau)^{q-1}}{p-1} |y(\tau)|^{\frac{2-p}{p-1}}  <0\,. $$

 Hence, $x(t) < x_{\tau}(t)$ when $t$ is in a suitable  right neighborhood of $\tau$.
 Let
 $$\hat{T}:= \sup\{ t > \tau \mid x(s) < x_{\tau}(s)\,,\, \forall s\in{}]\tau,t]\, \}\,.$$
If $\hat{T} > T(\tau,a)$,
 the lemma is proved. So, we assume by contradiction that $\tau< \hat{T}\leq T(\tau,a)$. Then, we have $\bar x = x(\hat T)=x_\tau(\hat T)$,
   $\bs{\phi}(\hat T)=(\bar x,\psi(\bar x))$ and $\bs{\phi_{\tau}}(\hat T)=(\bar x,\psi_{\tau}(\bar x))$.
   Hence, recalling~\eqref{psisotto},
   \begin{align*}
\dot x(\hat{T})
&
= \alpha x(\hat{T})- |y(\hat{T})|^{\frac{1}{p-1}} = \alpha \bar x -|\psi(\bar x)|^{\frac{1}{p-1}}\\
&< \alpha \bar x -|\psi_\tau(\bar x)|^{\frac{1}{p-1}}= \alpha x_{\tau}(\hat{T})-|y_{\tau}(\hat{T})|^{\frac{1}{p-1}}
=\dot x_{\tau}(\hat{T}) .
\end{align*}
So   $x > x_{\tau}$ in a left neighborhood of $\hat{T}$,
 giving a contradiction.
 \end{proof}

In the following Lemma we assume $K$ bounded. This assumption will be removed via  Remark~\ref{r.k}.


\begin{lemma}\label{l.est.time}
  Assume $\Hell$, $\Hinc$, and  $\Hstab$.
For any $\ee>0$ we define
\begin{equation}\label{def.a}
  a=a(\ee):=\left(   \frac{\alpha^q \ee }{(1+\ee)\ov{K} \ }  \right)^{\frac{p-1}{q-p}} < |E_y(+\infty)|=
\left(   \frac{\alpha^q }{K(+\infty)}  \right)^{\frac{p-1}{q-p}}
\,.
\end{equation}
Then, there is $\mathcal T(\ee)$ such that, for any $\tau<\mathcal T(\ee)$,
\begin{align}
\label{eq.star1}
 T(\tau,a) &> \tau +  \frac{1}{\alpha} \ln\left(\frac{a}{|R_y(\tau,a)|}\right)\\[2mm]
\label{eq.star2}
T^1(\tau,a) &< \tau +
 \frac{(1 +\ee)}{\alpha} \ln\left(\frac{a}{|R_y^1(\tau,a)|}\right)\,.
\end{align}

\end{lemma}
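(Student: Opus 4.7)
My plan is to bound each time by expressing $T(\tau,a)-\tau$ and $T^1(\tau,a)-\tau$ as integrals in the $|y|$-variable and comparing them with the linear part of the $y$-equation. The first bound will follow from a direct integration of the non-autonomous system; the second, which is the genuine estimate, relies on the choice of $a(\ee)$ in \eqref{def.a} being engineered precisely so that the nonlinear term $K(\tau)x_\tau^{q-1}$ stays an $\ee$-fraction of the linear term $\alpha|y_\tau|$ along the whole frozen trajectory.

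For \eqref{eq.star1}, along $\bs{\phi}(\cdot;\tau,\QQ(\tau,a))$ the second equation of \eqref{sist-din} yields
\[
\frac{d}{dt}\bigl(\eu^{\alpha t}|y(t)|\bigr) = \eu^{\alpha t}K(t)\,x(t)^{q-1} > 0 \quad\text{on }(\tau,T(\tau,a)),
\]
since $x>0$ and $K>0$ there by Proposition~\ref{p.classical.Poho}. Integrating over $[\tau,T(\tau,a)]$ and using $|y(\tau)|=a$, $|y(T(\tau,a))|=|R_y(\tau,a)|$, one obtains $\eu^{\alpha T(\tau,a)}|R_y(\tau,a)|> a\eu^{\alpha\tau}$, which is exactly \eqref{eq.star1}. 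Note that no smallness condition on $a$ is used here.

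For \eqref{eq.star2}, by \eqref{unain+} both $\dot x_\tau<0$ and $\dot y_\tau>0$ hold on $[\tau,T^1(\tau,a)]$, so $|y_\tau|$ decreases strictly from $a$ to $|R^1_y(\tau,a)|$ and may serve as a new independent variable:
\[
T^1(\tau,a)-\tau \;=\; \int_{|R^1_y(\tau,a)|}^{a}\frac{d|y|}{\alpha|y|-K(\tau)\,x_\tau^{q-1}(|y|)}\,,
\]
where $x_\tau(|y|)$ denotes $x_\tau$ regarded as a function of $|y_\tau|$ along the trajectory. The proof then reduces to the key estimate
\[
\frac{K(\tau)\,x_\tau^{q-1}(|y|)}{\alpha|y|} \;<\; \frac{\ee}{1+\ee} \qquad\text{for all }|y|\in[|R^1_y(\tau,a)|,\,a]\,,
\]
since this gives $\alpha|y|-K(\tau)x_\tau^{q-1}>\alpha|y|/(1+\ee)$, and integrating $\frac{1+\ee}{\alpha|y|}$ immediately yields \eqref{eq.star2}.

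The main step is to establish the key estimate. The inequality $\dot x_\tau<0$ is equivalent, by \eqref{defLa}, to $x_\tau<\tilde X(|y_\tau|)=|y_\tau|^{1/(p-1)}/\alpha$; combining this with $|y_\tau|\le a$ and assumption $\Hstab$, one computes
\[
\frac{K(\tau)x_\tau^{q-1}}{\alpha|y_\tau|} \;<\; \frac{K(\tau)}{\alpha^q}\,|y_\tau|^{(q-p)/(p-1)} \;\le\; \frac{\ov K}{\alpha^q}\,a^{(q-p)/(p-1)} \;=\; \frac{\ee}{1+\ee}\,,
\]
the last equality being precisely the definition \eqref{def.a} of $a(\ee)$. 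One then sets $\mathcal T(\ee):=-N(a(\ee))$, with $N(\cdot)$ as in Remark~\ref{r.La}, so that $\QQ(\tau,a(\ee))$ is well-defined for every $\tau<\mathcal T(\ee)$. The subtle point in the whole argument is the recognition that the somewhat opaque formula \eqref{def.a} for $a(\ee)$ has been chosen exactly so that this short chain of elementary inequalities closes for the entire trajectory.
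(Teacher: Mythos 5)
Your proof is correct and takes essentially the same approach as the paper: both hinge on the observation that $\dot x<0$ bounds $x$ by $|y|^{1/(p-1)}/\alpha$, so that the nonlinear term in the $y$-equation is controlled by the linear one, and the definition \eqref{def.a} of $a(\ee)$ makes this ratio exactly $\ee/(1+\ee)$. The only cosmetic difference is in bookkeeping: the paper derives the two-sided bound $-\frac{\alpha}{1+\ee}y<\dot y<-\alpha y$ and integrates $\dot y/(-\alpha y)$ over time for both trajectories, whereas you use an integrating factor for the first estimate and a change of variable to $|y|$ for the second; the computations are equivalent.
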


\begin{proof}
Let $\mathcal T(\ee)=-N(a(\ee))$ be the value provided by Remark~\ref{r.La}, and fix $\tau <\mathcal T(\ee)$.
We consider again the trajectories
$
\bs{\phi}(t)=(x(t),y(t))=\bs{\phi}(t;\tau, \QQ(\tau,a))$ and
$\bs{\phi_\tau}(t)=(x_\tau(t),y_\tau(t))=\bs{\phi_\tau}(t;\tau, \QQ(\tau,a))$.

Observe that $\dot y>0$ along the horizontal segment $\linea(a)$. as well as in the interior of the bounded set enclosed by $\linea(a)$, $\bs{\Gamma_\tau}$ and the $y$ negative semi-axis. Using this fact, according to Lemma~\ref{l.dots} and
~\eqref{unain+}, we see that
\begin{align*}
\dot{x}(t)<0<\dot{y}(t) \,,
&& \dot{x}_{\tau}(s)<0<\dot{y}_{\tau}(s)\,,
\\
-a\leq y(t)< 0\,,
&& -a\leq y_\tau(s)< 0\,,
\end{align*}
for any $t \in [\tau, T(\tau,a)]$ and any $s  \in [\tau, T^1(\tau,a)]$.
Moreover, for both the trajectories we get
\begin{equation}\label{e.dotx}
\dot{x}<0 \quad\Rightarrow\quad\alpha x < |y|^{\frac{1}{p-1}}
\quad\Rightarrow\quad
-x^{q-1} > -\frac{|y|^{\frac{q-1}{p-1}}}{\alpha^{q-1}}\,.
\end{equation}
Then, since $\bs{\phi}(t)=(x(t),y(t))$ solves~\eqref{sist-din}, we find
\begin{equation}\label{e.doty}
 -\alpha y(t) - \frac{K(t)}{\alpha^{q-1}} |y(t)|^{\frac{q-1}{p-1}} <
\dot y(t) = -\alpha y(t) - K(t)x(t)^{q-1} < -\alpha y(t)\,,
\end{equation}
for any  $t \in [\tau, T(\tau,a)[{}$. Analogously, since $\bs{\phi_\tau}(t)=(x_\tau(t),y_\tau(t))$ solves~\eqref{sist-frozen},
\begin{equation}\label{e.dotytau}
 -\alpha y_{\tau}(t) - \frac{K(\tau)}{\alpha^{q-1}} |y_{\tau}(t)|^{\frac{q-1}{p-1}} <
\dot y_{\tau}(t) = -\alpha y_{\tau}(t) - K(\tau)x_\tau(t)^{q-1} < -\alpha y_{\tau}(t)\,,
\end{equation}
for any  $t \in [\tau, T^1(\tau,a)[{}$.

\medbreak

Note that~\eqref{def.a}
guarantees that
$$
\frac{\ov{K}}{\alpha^q} |y|^{\frac{q-p}{p-1}} \leq\frac{\ee}{1+\ee}\,, \qquad 
\mbox{for every } y\in[-a,0].
$$
Thus, we get
\begin{equation}\label{conto.a}
\begin{split}
-\alpha y- \frac{K(t)}{\alpha^{q-1}}  |y|^{\frac{q-1}{p-1}}
&\geq  -\alpha y \left(1 -\frac{\ov{K}}{\alpha^q} |y|^{\frac{q-p}{p-1}} \right)
\geq
-\frac{\alpha}{1+ \ee}  y\,,
\end{split}
\end{equation}
for every $t\in\R$ and $y \in [-a, 0]$.
 In particular,
from~\eqref{e.doty},~\eqref{e.dotytau}, and~\eqref{conto.a}  we find
\begin{equation}\label{conto.doty}\begin{split}
-\frac{\alpha}{1+ \ee} y(t) <   \dot{y}(t) < -\alpha y(t)\, ,& \qquad
-\frac{\alpha}{1+ \ee} y_{\tau}(s) <  \dot{y}_{\tau}(s) < -\alpha y_{\tau}(s)\,,
\end{split}
\end{equation}
for every $t \in [\tau, T(\tau,a)[{}$ and $s \in [\tau, T^1(\tau,a)[{}$.

 Consequently,  $\frac{\dot{y}(t)}{-\alpha y(t)}<1$ for any $t \in [\tau, T(\tau,a)[$; hence,
recalling the definition of $ \bs{R}$ in~\eqref{pointR},
 we obtain
 \begin{equation*}
    \begin{split}
       T(\tau,a)-\tau & =\int_{\tau}^{T(\tau,a)} dt >  \int_{\tau}^{T(\tau,a)} \frac{\dot{y}(t)}{-\alpha y(t)} dt =   \\
       &=  -\frac{1}{\alpha} \ln\left(\frac{y(T(\tau,a))}{y(\tau)} \right)  = \frac{1}{\alpha} \ln\left(\frac{a}{|R_y(\tau,a)|}\right)\,.
    \end{split}
  \end{equation*}
Analogously, from~\eqref{conto.doty} we also find $\frac{(1+\ep)\dot{y}_{\tau}(s)}{-\alpha y_{\tau}(s)}>1$ for any $s \in [\tau; T^1(\tau,a)[$; hence recalling the definition of $ \bs{R}^1$ in~\eqref{pointR1}, we get
  \begin{equation*}
    \begin{split}
       T^1(\tau,a)-\tau & =\int_{\tau}^{T^1(\tau,a)} ds< \int_{\tau}^{T^1(\tau,a)} \frac{(1+\ee)\dot{y}_{\tau}(s)}{-\alpha y_{\tau}(s)} ds=\\
         &=  -\frac{1+\ee}{\alpha} \ln\left(\frac{y_{\tau}(T^1(\tau,a))}{y_{\tau}(\tau)} \right)  = \frac{1+\ee}{\alpha} \ln\left(\frac{a}{|R_y^1(\tau,a)|}\right)\,.
    \end{split}
  \end{equation*}
\end{proof}
By a simple integration of~\eqref{conto.doty}, we obtain a crucial inequality.
\begin{remark}
\label{r.daprima}
 Assume $\Hell$, $\Hinc$, and $\Hstab$. Then, for every $\ee>0$,
there is $\mathcal T(\ee)$ such that, for any $\tau<\mathcal T(\ee)$, the trajectory
  $\bs{\phi}(t; \tau , \QQ(\tau,a))=(x(t),y(t))$ satisfies
\begin{align}
  \label{daprima}
  a \eu^{-\alpha (t-\tau)}  &< |y(t)|  <
  a\, \eu^{-\frac{\alpha}{1+ \ee} (t-\tau)}\,, \qquad \qquad \mbox{for every }t\in]\tau, T(\tau,a)[\,.
\end{align}
\end{remark}

\begin{lemma}\label{l.est.T1}
Assume  $\Hell$, $\Hinc$, and $\Hstab$.
 Let  $\ee>0$ and   $a=a(\ee)>0$  be as in  Lemma~\ref{l.est.time}. Then, there are $\mathcal{T}(\ee)<0$ and  a constant $C=C(a(\ee),\ee)$ such that
 \begin{equation}\label{T1}
    T^1(\tau,a(\ee))< C(a(\ee),\ee) + |\tau| \left( (1+\ee)\,\frac{\ell}{\ell_{p}^*}  - 1\right) \,,\hspace{4mm}
\mbox{for any }\tau< \mathcal{T}(\ee).
 \end{equation}
\end{lemma}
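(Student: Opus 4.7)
The plan is to combine the upper bound~\eqref{eq.star2} from Lemma~\ref{l.est.time} with the asymptotic expansion~\eqref{e.Ha} from Remark~\ref{r.La}. The missing ingredient is an asymptotic lower bound on $|R^1_y(\tau,a)|$, which comes directly from energy conservation along the frozen system~\eqref{sist-frozen}.

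First, since $\bs{\phi_\tau}(\cdot;\tau,\QQ(\tau,a))$ solves the autonomous system~\eqref{sist-frozen} and $\HH(\,\cdot\,;\tau)$ is its conserved energy, we have
$$
\HH(\bs{R}^1(\tau,a);\tau) = \HH(\QQ(\tau,a);\tau).
$$
Evaluating the left-hand side at the point $\bs{R}^1(\tau,a)=(0,R^1_y(\tau,a))$ on the $y$-axis, the definition~\eqref{Hq} gives $\HH(0,R^1_y(\tau,a);\tau)=\tfrac{p-1}{p}|R^1_y(\tau,a)|^{p/(p-1)}$. Coupling with~\eqref{e.Ha}, we obtain
$$
\tfrac{p-1}{p}|R^1_y(\tau,a)|^{p/(p-1)} = c(a)\,\eu^{\ell\tau}+o(\eu^{\ell\tau}),\qquad \tau\to-\infty.
$$
Taking logarithms, this yields the asymptotic expansion
$$
\ln|R^1_y(\tau,a)| = \frac{\ell(p-1)}{p}\,\tau + C_1(a) + o(1)\qquad \mbox{as }\tau\to-\infty,
$$
where $C_1(a)=\frac{p-1}{p}\ln\!\left(\frac{p\, c(a)}{p-1}\right)$.

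Second, applying~\eqref{eq.star2} from Lemma~\ref{l.est.time} with $a=a(\ee)$ we get, for every $\tau<\mathcal T(\ee)$,
$$
T^1(\tau,a) < \tau + \frac{1+\ee}{\alpha}\bigl(\ln a - \ln|R^1_y(\tau,a)|\bigr).
$$
Substituting the expansion above,
$$
T^1(\tau,a) < \tau - \frac{(1+\ee)\ell(p-1)}{\alpha\, p}\,\tau + \widetilde C(a,\ee) + o(1).
$$
Using $\alpha=(n-p)/p$ and $\ell^*_p = (n-p)/(p-1)$, we have the identity
$$
\frac{(p-1)/p}{\alpha} = \frac{p-1}{n-p} = \frac{1}{\ell^*_p},
$$
so the coefficient of $\tau$ becomes $1 - (1+\ee)\ell/\ell^*_p$. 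Writing $\tau=-|\tau|$ and absorbing $o(1)$ into the constant (by possibly enlarging $\mathcal T(\ee)$), we conclude
$$
T^1(\tau,a(\ee)) < C(a(\ee),\ee) + |\tau|\!\left(\frac{(1+\ee)\ell}{\ell^*_p}-1\right),
$$
which is precisely~\eqref{T1}.

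The only genuine step is the energy/logarithm computation; the rest is arithmetic with $\alpha$ and $\ell^*_p$. The small technical care needed is that the lower bound for $|R^1_y(\tau,a)|$ comes from exact conservation of $\HH(\,\cdot\,;\tau)$ along the frozen trajectory (\emph{not} along the full non-autonomous flow), which is why we work with $\bs{R}^1$ rather than $\bs{R}$; and that $c(a)>0$ in~\eqref{e.Ha} is needed so that the logarithm is well-defined for $\tau$ sufficiently negative.
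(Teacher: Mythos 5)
Your proof is correct and follows essentially the same route as the paper: exploit conservation of $\HH(\cdot;\tau)$ along the frozen trajectory to express $|R^1_y(\tau,a)|$ in terms of $\HH(\QQ(\tau,a);\tau)$, feed the asymptotic~\eqref{e.Ha} into this, then plug into~\eqref{eq.star2} and simplify with $\alpha p/(p-1)=\ell^*_p$. The only cosmetic difference is that you package the step as an asymptotic expansion of $\ln|R^1_y|$ with an $o(1)$ absorbed by enlarging $|\mathcal T(\ee)|$, whereas the paper hard-wires the same tolerance as a factor $2$ in front of $a/c^1(a)$; both are valid ways of handling the $o(\eu^{\ell\tau})$ error term.
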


\begin{proof}
From Remark~\ref{r.La},~\eqref{Hq} and~\eqref{validor}, we deduce
the following estimates on the $y$-coordinate of the point $\bs{R}^1(\tau,a)$ introduced in~\eqref{pointR1}:
$$
\frac{p-1}{p} |R^1_y(\tau,a)|^{\frac{p}{p-1}} = \HH (\bs{R}^1(\tau,a);\tau) = \HH(\QQ(\tau,a);\tau) = c(a) \eu^{\ell \tau} + o\left(\eu^{\ell \tau}\right)\,,
$$
for any $\tau <\mathcal{T}(\ee)$, possibly choosing a larger $|\mathcal{T}(\ee)|$.
So, there is $c^1(a)= \left( c(a)\frac{p}{p-1}\right)^{\frac{p-1}{p}}>0$, which is independent of $\tau$, such that
\begin{equation}\label{Ryup}
|R^1_y(\tau,a)|=c^1(a)  \eu^{\frac{p-1}{p}\ell \tau} + o\left(\eu^{\frac{p-1}{p} \ell \tau}\right)\,,\qquad \mbox{for any }\tau <\mathcal{T}(\ee).
\end{equation}
Then, by~\eqref{eq.star2} in Lemma~\ref{l.est.time}, setting $ C(a,\ee)= \frac{1+\ee}{\alpha} \ln \left( \frac{2a}{c^1(a)}\right)$, we find
\begin{equation}\label{diseq.T1}
 \begin{split}
 T^1(\tau,a) &< \tau + \frac{1+\ee}{\alpha} \ln \left( \frac{2a}{c^1(a) \eu^{\frac{p-1}{p}\ell \tau} }\right)=\\
\noalign{\smallskip}
 & = \tau + C(a,\ee) +  |\tau| \ \ell \  \frac{1+\ee}{\alpha} \ \frac{p-1}{p} = \\
\noalign{\smallskip}
 & = C(a,\ee) + |\tau| \left(  \ell \ \frac{1+\ee}{\alpha}\  \frac{p-1}{p} - 1\right) \,,
 \end{split}
\end{equation}
for any $\tau< \mathcal{T}(\ee)$. Since
\begin{equation}\label{lpstar}
\ell_{p}^*= \frac{n-p}{p-1}= \frac{\alpha p}{p-1}\,,
\end{equation}
we conclude.
\end{proof}

\subsection{The case $\bs{\ell < \ell_{p}^*}$.}\label{S.sub-new}

\begin{proposition}\label{p.T1}
    Assume  $\Hell$, $\Hstab$ and that $\dot{K}(t)>0$ for any $t \in \R$.
If $\ell < \ell_{p}^*$, then there exists
$a>0$
 such that $$\lim_{\tau \to -\infty} T(\tau,a)=-\infty\,.$$
\end{proposition}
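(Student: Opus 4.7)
The plan is to derive the result by a direct combination of Lemmas~\ref{l.Tuno} and~\ref{l.est.T1}, exploiting the subcritical inequality $\ell<\ell^*_{p}$ to make a certain coefficient negative. First, observe that the hypothesis $\dot{K}(t)>0$ for every $t\in\R$ is strictly stronger than $\Hinc$, so together with $\Hell$ and $\Hstab$ we can invoke both lemmas under the assumptions of the proposition.

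Since $\ell<\ell_{p}^*$, we have $\ell/\ell_{p}^*<1$, so I would choose $\ee>0$ small enough that
$$
\kappa \;:=\; 1-(1+\ee)\,\frac{\ell}{\ell_{p}^*} \;>\;0 .
$$
With this $\ee$ fixed, set $a:=a(\ee)$ as in~\eqref{def.a} and let $\mathcal T(\ee)<0$ be the constant given by Lemma~\ref{l.est.T1}. Then, for every $\tau<\mathcal T(\ee)$,
$$
T^1(\tau,a) \;<\; C(a,\ee) + |\tau|\!\left((1+\ee)\,\frac{\ell}{\ell_{p}^*}-1\right) \;=\; C(a,\ee)-\kappa\,|\tau|.
$$

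Finally, by Lemma~\ref{l.Tuno} (possibly enlarging $|\mathcal T(\ee)|$ so that $\mathcal T(\ee)<-N(a)$), we have $T(\tau,a)<T^1(\tau,a)$, whence
$$
T(\tau,a) \;<\; C(a,\ee)-\kappa\,|\tau| \;\longrightarrow\; -\infty \qquad \text{as } \tau\to-\infty,
$$
which proves the statement with this choice of $a$.

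There is no genuine obstacle: all the substantive work has been encapsulated in the preparatory lemmas (Gr\"onwall-type bounds in Lemma~\ref{l.est.time}, the energy estimate \eqref{Ryup} inherited from Proposition~\ref{p.H0}, and the autonomous-vs-non-autonomous comparison of Lemma~\ref{l.Tuno}). The only point where the subcritical hypothesis $\ell<\ell^*_{p}$ is actually used is in the choice of $\ee$: the factor $\ell/\ell^*_{p}<1$ leaves room to absorb the multiplicative loss $(1+\ee)$ coming from Lemma~\ref{l.est.time}, and this is precisely what fails in the critical and supercritical cases treated later in \S\ref{S.super}.
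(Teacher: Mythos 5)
Your proof is correct and follows essentially the same route as the paper: choose $\ee>0$ so that $(1+\ee)\ell<\ell^*_p$, set $a=a(\ee)$ as in~\eqref{def.a}, and combine Lemma~\ref{l.est.T1} with Lemma~\ref{l.Tuno} to push $T(\tau,a)<T^1(\tau,a)\to-\infty$. The only additions you make are the (correct) explicit observation that $\dot K>0$ implies $\Hinc$, and the harmless precaution of enlarging $|\mathcal T(\ee)|$ to ensure $\mathcal T(\ee)<-N(a)$; neither changes the argument.
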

\begin{proof}
We simply need to choose $\ee>0$ in Lemma~\ref{l.est.T1} satisfying the
assumption $\ell(1+\ee)<\ell_{p}^*$.
Then, we set $a= a(\ee)$ as in~\eqref{def.a}, so that Lemma~\ref{l.est.T1} permits us to
 conclude the proof recalling that $T(\tau,a)<T_1(\tau,a)$ by Lemma~\ref{l.Tuno}.
\end{proof}

\begin{proposition}\label{p.sub}
Assume  $\Hell$, $\Hstab$ and that $\dot{K}(t)>0$ for any $t \in \R$.
If $\ell < \ell_{p}^*$, then there exists $D>0$ such that
    ${}]D, +\infty[{} \subset J$ and $\lim_{d \to +\infty} R(d)=0$.
\end{proposition}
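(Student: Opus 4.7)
The plan is to deduce Proposition~\ref{p.sub} mechanically from Proposition~\ref{p.T1} and Lemma~\ref{l.corresp.new}; no new estimate is required, only a careful matching of the two parametrizations.

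First, I would apply Proposition~\ref{p.T1} to fix $a>0$ such that
$$\lim_{\tau\to -\infty} T(\tau,a)=-\infty,$$
where $T(\tau,a)$ denotes the first crossing time with the negative $y$-semiaxis of the trajectory $\bs{\phi}(\cdot;\tau,\QQ(\tau,a))$ issued from $\QQ(\tau,a)\in W^u(\tau)\cap\linea(a)$. A small sanity check is in order here: the value $a=a(\ee)$ produced in Lemma~\ref{l.est.time} satisfies $a<|E_y(+\infty)|$, and since $\dot K>0$ forces $K(-\infty)\le K(+\infty)$, one has $|E_y(+\infty)|\le |E_y(-\infty)|$, so $\linea(a)$ is transversal to $\bs{\Gamma_{-\infty}}$, which is precisely the requirement in Remark~\ref{r.La}.

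Next, I would feed $\linea=\linea(a)$ into Lemma~\ref{l.corresp.new}. This supplies $N(a)>0$ and a constant $D=D(\linea(a),N(a))>0$, together with a continuous, strictly decreasing bijection $d_{\linea(a)}:{}]-\infty,-N(a)[{}\to{}]D,+\infty[{}$ whose inverse $\tau_{\linea(a)}:{}]D,+\infty[{}\to{}]-\infty,-N(a)[{}$ satisfies $\tau_{\linea(a)}(d)\to -\infty$ as $d\to +\infty$. Because the standing hypothesis $\dot K(t)>0$ for every $t\in\R$ forces $\Hinc$, Proposition~\ref{p.classical.Poho} gives $J={}]0,+\infty[{}$; in particular ${}]D,+\infty[{}\subset J$, which is the first half of the claim.

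Finally, I would invoke the defining identity of $d_{\linea(a)}$: for each $d>D$, writing $\tau=\tau_{\linea(a)}(d)$, one has $\bs{\phi}(t;d)\equiv \bs{\phi}(t;\tau,\QQ(\tau,a))$ for all $t\in\R$, so the first zero of $x(\cdot;d)$ coincides with $T(\tau,a)$, i.e.\ $T(d)=T(\tau_{\linea(a)}(d),a)$. Passing to the limit $d\to +\infty$ and combining the two limits above gives $T(d)\to -\infty$, whence $R(d)=\eu^{T(d)}\to 0$. I do not anticipate any genuine obstacle: the real work, namely controlling $T(\tau,a)$ via the Gr\"onwall-type bounds on the frozen and non-autonomous systems, has already been carried out in Lemmas~\ref{l.est.time}--\ref{l.est.T1} and Proposition~\ref{p.T1}; the remaining argument is just the routine dictionary between the $\tau$-parametrization of the unstable leaf and the initial-datum parametrization~$d$ provided by Lemma~\ref{l.corresp.new}.
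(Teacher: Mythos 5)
Your proposal is correct and follows essentially the same route as the paper's proof: fix $a$ via Proposition~\ref{p.T1}, use Lemma~\ref{l.corresp.new} to relate $d\to+\infty$ with $\tau_{\linea(a)}(d)\to-\infty$, and pass the limit $T(\tau,a)\to-\infty$ through $R(d)=\eu^{T(\tau_{\linea(a)}(d),a)}$. Your sanity check that $a<|E_y(+\infty)|\le|E_y(-\infty)|$ and your appeal to Proposition~\ref{p.classical.Poho} (via $\Hinc$) for $J={}]0,+\infty[{}$ are both sound and, if anything, make the inclusion $]D,+\infty[\subset J$ slightly more explicit than the paper's terse "in particular."
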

\begin{proof}
Let $a=a(\ee)>0$ be as in Proposition~\ref{p.T1}.
Then, we can recover the constant $N(a)>0$ provided by Remark~\ref{r.La}, and the function $d_{\linea(a)}$ defined in Lemma~\ref{l.corresp.new} such that $\bs{\phi}(t;d_{\linea(a)}(\tau)):= \bs{\phi}(t; \tau, \QQ(\tau,a))$.
From Lemma~\ref{l.corresp.new}
there exists $D=D(a)$ such that
 the inverse function
  $\tau_{_{\linea(a)}}:{}]D, +\infty[{} \to {}]-\infty,-N(a)[{}$ is continuous and satisfies
  $\lim_{d\to+\infty}\tau_{_{\linea(a)}}(d)=-\infty$.
In particular, $\,]D, +\infty[\, \subset J$.

\medbreak

Then, from Proposition~\ref{p.T1} and Remark~\ref{Tta-continuity}, we find
$$\lim_{d \to +\infty} T(\tau_{_{\linea(a)}}\!(d) ,a)= \lim_{\tau \to -\infty} T(\tau,a)=-\infty. $$
Therefore, the first zero $R(d)$  of $u(r;d)$ satisfies
$$
\lim_{d\to +\infty} R(d) = \lim_{d\to +\infty}
\eu^{T(\tau_{_{\linea(a)}}(d), a)}=0\,,
$$
thus concluding the proof.
\end{proof}

Since the previous results focus their attention
 on a neighborhood of $\tau=-\infty$, recalling Remark~\ref{r.k} we can
remove the hypothesis  $\Hstab$ and the monotonicity assumption on $K$.

\begin{proposition}\label{p.T1-bis}
    Assume  $\Hell$.
If $\ell < \ell_{p}^*$, then there exists
$a>0$
 such that $$\lim_{\tau \to -\infty} T(\tau,a)=-\infty\,.$$
\end{proposition}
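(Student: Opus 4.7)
The plan is to reduce Proposition~\ref{p.T1-bis} to Proposition~\ref{p.T1} by means of the truncation device described in Remark~\ref{r.k}. Starting from $K(t)=\K(\eu^t)$, I would produce a $C^1$ function $\hat K:\R\to\R$ that coincides with $K$ on $(-\infty,\hat T_0]$ for a suitable $\hat T_0<0$, satisfies $\dot{\hat K}(t)>0$ for every $t\in\R$, and has finite limit $A+1$ at $+\infty$. Then $\hat K$ obeys $\Hell$, $\Hinc$ and $\Hstab$, so Proposition~\ref{p.T1} applies to the system~\eqref{sist-din} driven by $\hat K$. Choosing $\ep>0$ with $\ell(1+\ep)<\ell_{p}^*$ and $a=a(\ep)>0$ as in~\eqref{def.a}, Proposition~\ref{p.T1} delivers $\hat T(\tau,a)\to-\infty$ as $\tau\to-\infty$, where $\hat T(\tau,a)$ denotes the first crossing of the negative $y$-semi-axis by the trajectory $\hat{\bs{\phi}}(\cdot\,;\tau,\hat{\QQ}(\tau,a))$ of the system driven by $\hat K$, and $\hat{\QQ}(\tau,a)$ is the intersection of its unstable leaf $\hat W^u(\tau)$ with $\linea(a)$.

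The remaining point is to match the objects constructed for $\hat K$ with those for $K$ once $\tau$ is sufficiently negative. Since $K\equiv\hat K$ on $(-\infty,\hat T_0]$, the two realizations of~\eqref{sist-din} have identical right-hand side for $t\le\hat T_0$, so their flows agree on any time interval contained in $(-\infty,\hat T_0]$. The characterization~\eqref{wuu} of the unstable leaf only involves the flow as $t\to-\infty$, hence $W^u(\tau)=\hat W^u(\tau)$ for every $\tau\le\hat T_0$, and in particular $\QQ(\tau,a)=\hat{\QQ}(\tau,a)$ for such $\tau$. Since $\hat T(\tau,a)\to-\infty$, for all $\tau$ sufficiently negative one has $\hat T(\tau,a)\le\hat T_0$; the trajectory $\hat{\bs{\phi}}(\cdot\,;\tau,\hat{\QQ}(\tau,a))$ then lies entirely in the time range where the two systems coincide, so it also solves the system associated with $K$, yielding $T(\tau,a)=\hat T(\tau,a)$. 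Letting $\tau\to-\infty$ concludes.

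The only subtlety I foresee concerns the identification $W^u(\tau)=\hat W^u(\tau)$ for $\tau\le\hat T_0$. This is morally the statement that the unstable leaf records only the past of the coefficient, but since $W^u(\tau)$ is defined in~\eqref{defW1} as a union of images of $W^u_{\lloc}(T)$ for $T\le 0$, one should check that this union is the same for both systems when $\tau\le\hat T_0$. This reduces to the uniqueness of the local unstable manifold (Theorem~\ref{local.mani}) and to the coincidence of the two flows in the range $t\le\hat T_0$, and no further analytical input is needed.
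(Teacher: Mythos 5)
Your proposal is correct and matches the paper's intended argument: the paper dispatches Proposition~\ref{p.T1-bis} in one sentence by appealing to the truncation of Remark~\ref{r.k}, and your write-up just makes that reduction explicit. The two key observations you isolate — that $W^u(\tau)=\hat W^u(\tau)$ for $\tau\le\hat T_0$ because \eqref{wuu} only involves the flow for $t$ near $-\infty$ where $K$ and $\hat K$ agree, and that $\hat T(\tau,a)\to-\infty$ eventually forces the whole relevant time interval $[\tau,\hat T(\tau,a)]$ into $(-\infty,\hat T_0]$ so the two flows coincide there and $T(\tau,a)=\hat T(\tau,a)$ — are precisely the content behind the paper's one-line justification.
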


\noindent
Now, repeating the argument of the proof of Proposition~\ref{p.sub} combined with the truncation argument of the proof of Proposition~\ref{p.T1-bis}, we
obtain the asymptotic behavior of $R(d)$ for large values of $d$, which will allow us to prove
the part of Theorem~\ref{t.main} concerning the $\ell < \ell_{p}^*$ case.

\begin{proposition}\label{p.sub-bis}
     Let assumption  $\Hell$ hold with $\ell < \ell_{p}^*$. Then, there exists $\hat D>0$ such that
    ${}]\hat D, +\infty[{} \subset J$ and $\ds{\lim_{d \to +\infty} R(d)=0}$.
\end{proposition}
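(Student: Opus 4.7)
The plan is to copy the proof of Proposition~\ref{p.sub} almost verbatim, replacing Proposition~\ref{p.T1} with Proposition~\ref{p.T1-bis}. Since Proposition~\ref{p.T1-bis} has already absorbed the truncation argument of Remark~\ref{r.k}, only the local behaviour of the flow near $\tau=-\infty$ enters, and it is precisely there that $K$ coincides with its monotone extension $\hat K$; consequently the global hypotheses $\Hinc$ and $\Hstab$ present in Proposition~\ref{p.sub} become superfluous.

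Concretely, I would first invoke Proposition~\ref{p.T1-bis} to produce $a>0$ with $T(\tau,a)\to-\infty$ as $\tau\to-\infty$. This $T(\tau,a)$ is well defined for all sufficiently negative $\tau$, because by Remark~\ref{r.k} the trajectory $\bs{\phi}(\cdot;\tau,\QQ(\tau,a))$ of~\eqref{sist-din} agrees with the corresponding trajectory of the system frozen along $\hat K$ as long as both stay in $t\leq\hat T_0$, and Proposition~\ref{p.T1} applied to $\hat K$ forces the latter to cross the negative $y$-semi-axis well before $\hat T_0$. By Remark~\ref{r.La} and Lemma~\ref{l.corresp.new}, the map $\tau\mapsto d_{\linea(a)}(\tau)$ is a continuous, monotone decreasing bijection from $]-\infty,-N(a)[{}$ onto $]D,+\infty[{}$ for some $D=D(a)$, and $d_{\linea(a)}(\tau)\to+\infty$ as $\tau\to-\infty$; Remark~\ref{r.corresp} then identifies the trajectory above with the regular solution $u(\cdot;d_{\linea(a)}(\tau))$ of~\eqref{eq.pla}, whose first zero is located at $r=\eu^{T(\tau,a)}$.

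It remains to choose $\hat D\geq D$ large enough that $\tau_{\linea(a)}(d)$ lies below the threshold of validity of Proposition~\ref{p.T1-bis} for every $d>\hat D$; then $]\hat D,+\infty[{}\subset J$ and, using also Remark~\ref{Tta-continuity},
$$
\lim_{d\to+\infty} R(d) \,=\, \lim_{d\to+\infty} \eu^{T(\tau_{\linea(a)}(d),a)} \,=\, \lim_{\tau\to-\infty} \eu^{T(\tau,a)} \,=\, 0,
$$
which is the desired conclusion. There is really no obstacle beyond bookkeeping: the only delicate issue—namely that without $\Hinc$ the regular solution need not a priori be crossing—is bypassed because, for $d$ large, the trajectory is trapped in the region $t\leq\hat T_0$ where $K=\hat K$, and it therefore inherits the crossing behaviour from the monotone model.
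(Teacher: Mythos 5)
Your proof is correct and follows essentially the same route the paper intends: it unpacks the instruction ``repeat the proof of Proposition~\ref{p.sub} combined with the truncation argument'' by observing that for $\tau\ll 0$ the unstable leaf $W^u(\tau)$, the point $\QQ(\tau,a)$, and the trajectory $\bs{\phi}(\cdot;\tau,\QQ(\tau,a))$ up to time $\hat T_0$ are unchanged if $K$ is replaced by the monotone extension $\hat K$ of Remark~\ref{r.k}, and since $T(\tau,a)\to-\infty$ the crossing occurs before $\hat T_0$, so for $d$ large the regular solution is in fact crossing and the argument of Proposition~\ref{p.sub} goes through verbatim. The only cosmetic remark is that ``the system frozen along $\hat K$'' is better phrased as ``system~\eqref{sist-din} with $K$ replaced by $\hat K$''; the paper reserves the word \emph{frozen} for the autonomous system~\eqref{sist-frozen}.
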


\subsection{The case $ \bs{\ell \geq \ell_{p}^*}$.}\label{S.super}

In this subsection we focus our attention on the opposite case  $\ell \geq \ell_{p}^*$.
We invite the reader to take in mind Lemma \ref{l.corresp.new}, i.e.
the intersection time $\tau_{\linea}(d) \to -\infty$ if and only if $d \to +\infty$ .

\begin{remark}\label{r.why}
Recalling the definition of $\alpha$ in~\eqref{transf1}
 we see that if $\ee<1/\alpha$ then
\begin{equation}
\label{ell0}
\ell_{p}^* < \frac{1}{1+\ee} \  \frac{q\alpha}{p-1}\,.
\end{equation}
\end{remark}

\begin{lemma}\label{l.H1leq2H0}
  Let assumptions $\Hell$, $\Hinc$, and $\Hstab$ hold with $ \ell \geq \ell^*_p$. Let us fix $\ee<1/\alpha$
 and define $a=a(\ee)$ as in \eqref{def.a}.
Assume that there exists a sequence $(\tau_n)_n$ with $\tau_n \to -\infty$, satisfying $\displaystyle{\lim_{n\to+\infty} T(\tau_n, a)} = -\infty$.
Then, if $n$ is sufficiently large,
$$
\HH(\RR(\tau_n, a);T(\tau_n, a))
\leq 2 \HH(\QQ(\tau_n, a); \tau_n)\,.
$$
\end{lemma}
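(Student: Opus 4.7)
The identity~\eqref{Hder}, integrated along $\bs{\phi}(t;\tau_n,\QQ(\tau_n,a))$ from $\tau_n$ to $T(\tau_n,a)$, gives
$$\HH(\RR(\tau_n,a);T(\tau_n,a)) = \HH(\QQ(\tau_n,a);\tau_n) + \int_{\tau_n}^{T(\tau_n,a)}\dot K(t)\,\frac{x(t)^q}{q}\,dt,$$
so the lemma reduces to bounding the last integral by $\HH(\QQ(\tau_n,a);\tau_n)$ for $n$ large. My strategy is to show that both quantities behave like $e^{\ell\tau_n}$, with multiplicative constants built from the frozen-at-$-\infty$ homoclinic, and to compare them. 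By Proposition~\ref{p.H0} applied to $\linea(a)$,
$$\HH(\QQ(\tau_n,a);\tau_n) = \frac{B\ell}{q}\,e^{\ell\tau_n}\int_{-\infty}^{0} e^{\ell s}\,x^\diamond(s)^q\,ds\,(1+o(1)),$$
where $x^\diamond(s)$ is the first coordinate of $\bs{\phi_{-\infty}}(s;0,\QQ(-\infty,a))$.

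Next I bound the future integrand pointwise. Since $T(\tau_n,a)\to-\infty$, Remark~\ref{r.k} yields $\dot K(t)=B\ell\,e^{\ell t}(1+o(1))$ uniformly on $[\tau_n,T(\tau_n,a)]$ for $n$ large. Along the trajectory, $\dot x<0$ on $[\tau_n,T(\tau_n,a)]$ (Lemma~\ref{l.dots} combined with the sign of $\dot x$ at $\QQ(\tau_n,a)$ from Remark~\ref{r.La}), so $\alpha x(t)<|y(t)|^{1/(p-1)}$; using the upper bound of Remark~\ref{r.daprima} I get
$$x(t)^q\le\alpha^{-q}a^{q/(p-1)}e^{-\gamma(t-\tau_n)}, \qquad \gamma:=\frac{n}{(p-1)(1+\ee)}.$$
The hypothesis $\ee<1/\alpha$, via Remark~\ref{r.why}, ensures $\ell^*_p<\gamma$, so the strict inequality $\ell<\gamma$ holds in the regime where the lemma will be used (and, more generally, as long as $\ell<n/(p-1)$, after possibly shrinking $\ee$).

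With $s=t-\tau_n$ and $\tilde x(s)=x(s+\tau_n;\tau_n,\QQ(\tau_n,a))$, I split the future integral at a fixed threshold $M>0$. On $[0,M]$, the convergences $\QQ(\tau_n,a)\to\QQ(-\infty,a)$ and $K(\cdot+\tau_n)\to K(-\infty)$ combined with continuous dependence on data and parameters (in the spirit of Lemma~\ref{l.vicine}, now extended to $s\ge0$ on a compact interval) give $\tilde x(s)\to x^\diamond(s)$ uniformly, so this piece contributes $\tfrac{B\ell}{q}e^{\ell\tau_n}\int_0^M e^{\ell s}x^\diamond(s)^q\,ds\,(1+o(1))$. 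On $[M,T(\tau_n,a)-\tau_n]$, the pointwise bound together with $\ell<\gamma$ yields a contribution $\le \tfrac{2B\ell\,a^{q/(p-1)}}{q\alpha^q(\gamma-\ell)}e^{(\ell-\gamma)M}\,e^{\ell\tau_n}$, which is $o(e^{\ell\tau_n})$ once $M$ is taken large enough.

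Passing to the limit first in $\tau_n$ and then in $M$, the ratio $\HH(\RR(\tau_n,a);T(\tau_n,a))/\HH(\QQ(\tau_n,a);\tau_n)$ converges to
$$1+\frac{\int_0^\infty e^{\ell s}x^\diamond(s)^q\,ds}{\int_{-\infty}^0 e^{\ell s}x^\diamond(s)^q\,ds}.$$
Since by construction $a<|E_y(+\infty)|\le|E_y(-\infty)|$, the point $\QQ(-\infty,a)$ lies strictly on the descending branch of $\bs{\Gamma_{-\infty}}$ past the maximum of $x^\diamond$, so a direct inspection of~\eqref{homosapiens} shows that the second fraction is strictly smaller than $1$; in particular the limit is strictly less than $2$, and the claimed inequality holds for $n$ large. \emph{Main obstacle.} The delicate points are the uniform convergence $\tilde x(s)\to x^\diamond(s)$ for $s\in[0,M]$ (Lemma~\ref{l.vicine} only covers $s\le 0$) and the integrability $\int_0^\infty e^{\ell s}x^\diamond(s)^q\,ds<\infty$, which rests on the decay $x^\diamond(s)\sim e^{-\alpha s/(p-1)}$ coming from~\eqref{homosapiens}.
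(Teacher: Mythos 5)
Your proposal has a genuine gap that makes it fail to cover the full range of the hypothesis. You bound the future integral by
$$\int_{\tau_n}^{T(\tau_n,a)}\dot K(t)\frac{x(t)^q}{q}\,dt \lesssim e^{\ell\tau_n}\int_0^{\cdot} e^{(\ell-\gamma)s}\,ds,\qquad \gamma=\frac{n}{(p-1)(1+\ee)},$$
and the convergence of the tail integral requires $\ell<\gamma$. You acknowledge this ("\dots as long as $\ell<n/(p-1)$\dots"), but the lemma is stated for \emph{all} $\ell\ge\ell^*_p$, with no upper bound; it is invoked via Proposition~\ref{prop.liminf} (through Lemma~\ref{l.H1leq2H0-bis}) precisely to cover arbitrary $\ell\ge\ell^*_p$, including $\ell\ge n/(p-1)$. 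For such $\ell$ your tail integral diverges and the argument collapses. The paper evades this by a small but essential trick in its first line: since $t\le\hat T_0\le 0$ on $[\tau_n,T(\tau_n,a)]$, it replaces $e^{\ell t}$ by the \emph{larger} quantity $e^{\ell^*_p t}$ (monotonicity of $\lambda\mapsto e^{\lambda t}$ for $t\le 0$), so the decaying exponent in the integral becomes $\ell^*_p-\gamma<0$, which holds for every $\ee<1/\alpha$ by Remark~\ref{r.why} regardless of how large $\ell$ is.

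The paper's argument is also structurally different and more economical: instead of computing the asymptotic limit of the ratio $\HH(\RR)/\HH(\QQ)$ (which, besides the issue above, forces you to establish the uniform convergence $\tilde x(s)\to x^\diamond(s)$ on $[0,M]$ — Lemma~\ref{l.vicine} only covers $s\le 0$, as you note — and to justify $\int_0^\infty e^{\ell s}x^\diamond(s)^q\,ds<\int_{-\infty}^0 e^{\ell s}x^\diamond(s)^q\,ds$, which you assert without proof), the paper argues by contradiction. It shows that the failure of the conclusion, i.e.\ $\HH(\RR(\tau_n,a);T(\tau_n,a))>2\HH(\QQ(\tau_n,a);\tau_n)$, together with the bound $\HH(\RR)-\HH(\QQ)\le C_\HH(\ee)\,e^{\ell^*_p\tau_n}$, forces $|R_y(\tau_n,a)|\lesssim e^{\frac{p-1}{p}\ell^*_p\tau_n}$; plugging this into the Gr\"onwall estimate~\eqref{eq.star1} yields $T(\tau_n,a)\ge\widetilde C_\HH(\ee)$, contradicting $T(\tau_n,a)\to-\infty$. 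This sidesteps any need for sharp asymptotics, extensions of Lemma~\ref{l.vicine}, or qualitative comparisons of homoclinic integrals. To salvage your approach, you would need (i) to substitute $e^{\ell t}\le e^{\ell^*_p t}$ at the start, thereby accepting a lossy but uniformly valid bound, and (ii) to reorganize as a contradiction with~\eqref{eq.star1} rather than computing a limit — at which point you would essentially have reconstructed the paper's proof.
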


\begin{proof}
Since both $\tau_n$ and $T(\tau_n, a)$ converge to $-\infty$,
according to Remark~\ref{r.k},
we can set
$\dot K(t)< 2B\ell \eu^{\ell t}\leq 2B\ell \eu^{\ell^*_p t}$ for every $t\in[\tau_n,T(\tau_n, a)]$.
Then,  using \eqref{Hder}, \eqref{e.dotx}, Remark \ref{r.daprima} and \eqref{ell0}, we get
\begin{equation*}
\begin{split}
\HH(\RR(\tau_n, a);T(\tau_n, a)) &\, -\HH(\QQ(\tau_n, a); \tau_n)\,=\\
&=\int_{\tau_n}^{T(\tau_n, a)} \,  \dot K(t) \frac{|x(t)|^q}{q}\, dt\,<\\
&< \frac{2B \ell}{q\alpha^q} \int_{\tau_n}^{T(\tau_n, a)} \eu^{\ell^*_p t} |y(t)|^{\frac{q}{p-1}} \, dt\,<\\
&< \frac{2B \ell}{q\alpha^q} a^{\frac{q}{p-1}} \eu^{\ell^*_p \tau_n}\,
 \int_{0}^{T(\tau_n, a)-\tau_n} \eu^{\left(\ell^*_p - \frac{\alpha}{1+\ee} \, \frac{q}{p-1} \right) s}  \, ds\,<\\
 &< \frac{2B \ell}{q\alpha^q} a^{\frac{q}{p-1}} \eu^{\ell^*_p \tau_n}
 \int_{0}^{+\infty} \eu^{\left(\ell^*_p - \frac{1}{1+\ee} \, \frac{q \alpha }{p-1} \right) s}  \, ds\,=\\
 &=\frac{2B\ell}{q\alpha^q} a^{\frac{q}{p-1}}
 \,\frac{1}{\frac{1}{1+\ee} \, \frac{q\alpha}{p-1}-\ell^*_p}\, \eu^{\ell^*_p \tau_n} =: C_\HH(\ee) \ \eu^{\ell^*_p \tau_n}\,, \\
\end{split}
\end{equation*}
when $n$ is sufficiently large.

We argue by contradiction, and assume that there exists a subsequence of $\tau_n$, still called $\tau_n$ for simplicity, which satisfies
$$
\HH(\RR(\tau_n, a);T(\tau_n, a)) \,  > 2 \HH(\QQ(\tau_n, a); \tau_n)\,,
$$
so that, for sufficiently large $n$,
\begin{equation*}
\begin{split}
\HH(\RR(\tau_n, a);T(\tau_n, a)) \, & < 2\left[ \HH(\RR(\tau_n, a);T(\tau_n, a))  -\HH(\QQ(\tau_n, a);\tau_n) \right]\,\leq\\
& \leq 2 C_\HH(\ee) \  \eu^{\ell^*_p \tau_n}\,.
\end{split}
\end{equation*}
Then, recalling the definition of $\HH$ in~\eqref{Hq} and the definition of  $\bs{R}$ in~\eqref{pointR}, we get
$$
|R_y(\tau_n, a)| < \hat C_\HH(\ee) \ \eu^{\ell^*_p \frac{p-1}{p} \tau_n}\,,
$$
where $\hat C_\HH(\ee)= \left[ 2 C_\HH(\ee) \, \frac{p}{p-1} \right]^{\frac{p-1}{p}}$. Hence,
setting $\widetilde{C}_\HH(\ee):= {\frac{1}{\alpha}} \ln \left( \frac{a}{\hat C_\HH(\ee)}\right)$,
from~\eqref{eq.star1} and~\eqref{lpstar}
it follows that
\begin{equation*}
\begin{split}
T(\tau_n,a) \, & > \tau_n +\frac{1}{\alpha} \ln \left( \frac{a}{\hat C_\HH(\ee)} \eu^{-\ell^*_p \frac{p-1}{p} \tau_n}\right)\,= \\ & = \widetilde{C}_\HH(\ee) + \tau_n\left( 1- \ell^*_p\, \frac{p-1}{p\alpha} \right) =  \widetilde{C}_\HH(\ee)\,,
\end{split}
\end{equation*}
which contradicts our assumption $\ds{\lim_{n\to+\infty} T(\tau_n, a) = -\infty}$.
\end{proof}

\begin{lemma}\label{l.H1leq2H0-bis}
  Let assumption $\Hell$ hold with $\ell \geq \ell_{p}^*$. Fix $\ee<1/\alpha$ and define
  \begin{equation}\label{def.a.hat}
  a=a(\ee):=\left(   \frac{\alpha^q \ee }{[K(-\infty)+1](1+\ee)}  \right)^{\frac{p-1}{q-p}} \,.
\end{equation}

Let us assume that there exist a sequence $(\tau_n)_n$ with $\tau_n \to -\infty$, such that $\displaystyle{\lim_{n\to+\infty} T(\tau_n, a)} = -\infty$.
Then, if $n$ is sufficiently large,
$$
\HH(\RR(\tau_n, a);T(\tau_n, a))
\leq 2 \HH(\QQ(\tau_n, a); \tau_n)\,.
$$
\end{lemma}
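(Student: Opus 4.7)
The plan is to reduce this to Lemma~\ref{l.H1leq2H0} by the truncation argument of Remark~\ref{r.k}. Concretely, starting from $\K$ (equivalently $K$) which satisfies only $\Hell$, we construct $\hat{K}:\R\to\R$ of class $C^1$ coinciding with $K$ on $]-\infty,\hat T_0]$, strictly increasing everywhere, and converging to the finite limit $\hat K(+\infty)=A+1=K(-\infty)+1$. By construction, $\hat K$ satisfies $\Hell$, $\Hinc$ and $\Hstab$ simultaneously, with upper bound $\ov{\hat K}=K(-\infty)+1$; notice that the quantity $\hat a(\ee)$ furnished by formula~\eqref{def.a} applied to $\hat K$ is exactly the $a(\ee)$ defined in~\eqref{def.a.hat}.

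Let $\hat{\bs{\phi}}$, $\hat{\QQ}$, $\hat{\bs{R}}$, $\hat T(\tau,a)$ denote the objects associated with the dynamical system~\eqref{sist-din} where $K$ is replaced by $\hat K$. Since $\hat K\equiv K$ on $]-\infty,\hat T_0]$, the vector fields of the two non-autonomous systems agree for $t\leq \hat T_0$; consequently, the unstable leaves $W^u(\tau)$ coincide for $\tau\leq \hat T_0$, and so do the transverse intersections with $\linea(a)$, i.e.\ $\QQ(\tau,a)=\hat\QQ(\tau,a)$ for $\tau< \hat T_0$ (for $|\tau|$ large enough so that Remark~\ref{r.La} applies to both systems).

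Since by hypothesis $\tau_n\to-\infty$ and $T(\tau_n,a)\to-\infty$, for $n$ sufficiently large both $\tau_n$ and $T(\tau_n,a)$ belong to $]-\infty,\hat T_0]$. On this interval the two vector fields agree, so the whole trajectory $\bs{\phi}(t;\tau_n,\QQ(\tau_n,a))$, $t\in[\tau_n,T(\tau_n,a)]$, coincides with $\hat{\bs{\phi}}(t;\tau_n,\hat\QQ(\tau_n,a))$; in particular $\hat T(\tau_n,a)=T(\tau_n,a)\to-\infty$ and $\hat{\bs R}(\tau_n,a)=\bs R(\tau_n,a)$. Moreover, since $K=\hat K$ at the relevant times, the energies match: $\HH(\QQ(\tau_n,a);\tau_n)=\hat\HH(\hat\QQ(\tau_n,a);\tau_n)$ and $\HH(\bs R(\tau_n,a);T(\tau_n,a))=\hat\HH(\hat{\bs R}(\tau_n,a);\hat T(\tau_n,a))$.

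The main (minor) obstacle is just making sure the choice of constant $a(\ee)$ in~\eqref{def.a.hat} is consistent: it must satisfy the smallness condition $a(\ee)<|E_y(+\infty)|$ required by Lemma~\ref{l.H1leq2H0} for $\hat K$, and this is exactly guaranteed because $\hat K(+\infty)=K(-\infty)+1$ makes~\eqref{def.a.hat} coincide with~\eqref{def.a} applied to $\hat K$. We can then invoke Lemma~\ref{l.H1leq2H0} for the system associated with $\hat K$, applied to the sequence $(\tau_n)_n$, to conclude that for $n$ large
\[
\hat\HH(\hat{\bs R}(\tau_n,a);\hat T(\tau_n,a))\leq 2\,\hat\HH(\hat\QQ(\tau_n,a);\tau_n).
\]
Translating back via the identities above yields the desired inequality $\HH(\bs R(\tau_n,a);T(\tau_n,a))\leq 2\HH(\QQ(\tau_n,a);\tau_n)$.
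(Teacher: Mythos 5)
Your proposal is correct and follows exactly the paper's own strategy: replace $K$ by the truncation $\hat K$ from Remark~\ref{r.k}, observe that for $n$ large both $\tau_n$ and $T(\tau_n,a)$ lie in $]-\infty,\hat T_0]$ where the two vector fields coincide, and then invoke Lemma~\ref{l.H1leq2H0} for the $\hat K$-system. You have simply spelled out (correctly) the details the paper leaves implicit — in particular the observation that $\ov{\hat K}=K(-\infty)+1$ makes~\eqref{def.a} applied to $\hat K$ agree with~\eqref{def.a.hat}, and the verification that $\QQ$, $\RR$, $T$, and $\HH$ all coincide for the two systems on the relevant time range.
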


\begin{proof}
Arguing as in Remark~\ref{r.k}, we can modify system~\eqref{sist-din}, replacing $K$ with $\hat{K}$ and notice that we can apply Lemma~\ref{l.H1leq2H0} in this case.
From the hypothesis, we deduce that, for $n$ large, we have $\tau_n<T(\tau_n,a)<\hat{T}_0$. Since
$\hat K$ and the original $K$ coincide  on ${}]-\infty,\hat{T}_0]$,  we easily conclude.
\end{proof}

\begin{proposition}\label{prop.liminf}
Assume   $\Hell$ with $ \ell \geq \ell_{p}^*$.
Fix  $\ee<1/\alpha$, and define
$a=a(\ee)$ as in \eqref{def.a.hat}.
Then,
$$
\liminf_{\tau\to-\infty} T(\tau,a) > - \infty\,,
$$
where we set $T(\tau,a):=+\infty$, if the corresponding trajectory $\bs{\phi}(t; \tau, \QQ(\tau,a))$ does not cross the negative $y$-axis.
\end{proposition}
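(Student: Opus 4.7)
\medbreak

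\noindent\textbf{Proof proposal for Proposition~\ref{prop.liminf}.}
The plan is to argue by contradiction: assume there is a sequence $\tau_n \to -\infty$ such that $T(\tau_n,a)$ is finite and $T(\tau_n,a) \to -\infty$. Once we have this, all the estimates of Section~4 become available on $[\tau_n,T(\tau_n,a)]$ through the truncation device of Remark~\ref{r.k}, since for $n$ large both endpoints lie to the left of $\hat T_0$, where the original $K$ agrees with the $C^1$, increasing, bounded extension $\hat K$. In particular, Lemma~\ref{l.H1leq2H0-bis}, the asymptotic expansion~\eqref{e.Ha} in Remark~\ref{r.La}, and the lower bound~\eqref{eq.star1} in Lemma~\ref{l.est.time} may all be applied to the truncated system (and hence to the original one) along the sequence $(\tau_n)_n$.

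The first step is to turn the energy comparison given by Lemma~\ref{l.H1leq2H0-bis} into a bound on $|R_y(\tau_n,a)|$. By Remark~\ref{r.La} there is a constant $c(a)>0$ such that
\begin{equation*}
\HH(\QQ(\tau_n,a);\tau_n) = c(a)\,\eu^{\ell \tau_n} + o(\eu^{\ell \tau_n}),
\end{equation*}
hence $\HH(\QQ(\tau_n,a);\tau_n) \le 2 c(a)\,\eu^{\ell \tau_n}$ for $n$ large. On the other hand, since $\RR(\tau_n,a)=(0,R_y(\tau_n,a))$ lies on the $y$-axis, the definition~\eqref{Hq} of $\HH$ gives
\begin{equation*}
\HH(\RR(\tau_n,a);T(\tau_n,a))=\frac{p-1}{p}\,|R_y(\tau_n,a)|^{\frac{p}{p-1}}.
\end{equation*}
Combining with Lemma~\ref{l.H1leq2H0-bis} yields $|R_y(\tau_n,a)| \le \hat c(a)\,\eu^{\frac{p-1}{p}\ell\tau_n}$ for some constant $\hat c(a)>0$, exactly as in the derivation of~\eqref{Ryup}.

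The second step is to feed this bound into the lower estimate~\eqref{eq.star1}. Using $\ell_p^* = \alpha p/(p-1)$ as in~\eqref{lpstar}, we obtain
\begin{equation*}
T(\tau_n,a) > \tau_n + \frac{1}{\alpha}\ln\!\left(\frac{a}{|R_y(\tau_n,a)|}\right) \ge \frac{1}{\alpha}\ln\!\left(\frac{a}{\hat c(a)}\right) + \tau_n\left(1-\frac{\ell}{\ell_p^*}\right).
\end{equation*}
Because $\ell \ge \ell_p^*$, the coefficient $1-\ell/\ell_p^*$ is non-positive, so as $\tau_n \to -\infty$ the right-hand side either stays bounded below (case $\ell=\ell_p^*$) or diverges to $+\infty$ (case $\ell>\ell_p^*$). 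In either situation $T(\tau_n,a)$ cannot tend to $-\infty$, a contradiction. This finishes the proof.

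\medbreak

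The main obstacle, conceptually, is not this final chain of inequalities -- which is essentially bookkeeping -- but rather the fact that Lemma~\ref{l.H1leq2H0-bis} requires the auxiliary assumption that $T(\tau_n,a)\to-\infty$ in order to convert $\dot K(t)$ into a clean $\eu^{\ell_p^* t}$ bound on the whole interval $[\tau_n,T(\tau_n,a)]$. That is exactly what a proof by contradiction gives us for free, which is why this route is the natural one; the critical exponent matching between $\ell_p^*$ arising from $\Hell$ via $\dot K$ and $\ell_p^*=\alpha p/(p-1)$ arising from the Fowler scaling is what makes the opposite inequality $\ell\ge\ell_p^*$ incompatible with an arbitrarily early crossing time.
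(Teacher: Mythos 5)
Your proof is correct and follows essentially the same route as the paper's: contradiction against a sequence with $T(\tau_n,a)\to-\infty$, truncation via Remark~\ref{r.k}, the energy bound from Lemma~\ref{l.H1leq2H0-bis} together with Remark~\ref{r.La} to control $|R_y(\tau_n,a)|$, and finally the Gr\"onwall-type lower bound on the crossing time (you invoke~\eqref{eq.star1}, the paper invokes Remark~\ref{r.daprima}, but the two are literal rearrangements of the same inequality). The sign analysis $1-\ell/\ell_p^*\le 0$ against $\tau_n<0$ matches the paper's closing step.
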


\begin{proof}
We argue by contradiction assuming
the existence of a sequence $(\tau_n)_n$, with $\tau_n \to -\infty$ and satisfying $\lim_{n\to+\infty} T(\tau_n,a) = -\infty$.

Since  we are focusing our attention on a neighborhood of $t=-\infty$, we can again suitably modify $K$ in the function $\hat K$ as suggested in Remark \ref{r.k}, in order to ensure the validity of the hypotheses of Lemma~\ref{l.H1leq2H0}.

Hence, we can assume,
without loss of generality, that $\Hinc$ and $\Hstab$ hold, too.
So, according to~\eqref{Hder}, the energy $\HH$ is increasing along the trajectories.
As a consequence, from Lemma~\ref{l.H1leq2H0-bis} and Remark~\ref{r.La}, we get
\begin{align*}
\tfrac12 c(a) \eu^{\ell \tau_n} \leq
\HH(\QQ(\tau_n,a);\tau_n) & \leq
\HH(\RR(\tau_n,a);T(\tau_n,a)) \\ & \leq
2 \HH(\QQ(\tau_n,a);\tau_n) \leq
3 c(a) \eu^{\ell \tau_n},
\end{align*}
for $n$ sufficiently large.
So, recalling
 the definition of $\HH$ in~\eqref{Hq}  and the definition of  $\bs{R}$ in~\eqref{pointR},
we can find a positive constant $\check c_1(a)$ such that
$$
|R_y(\tau_n,a)| \leq \check c_1(a)
\eu^{\frac{p-1}{p}\, \ell \tau_n}\,.
$$
Moreover, we can use the estimate in Remark~\ref{r.daprima} to get
$$
a \eu^{-\alpha(T(\tau_n,a)-\tau_n)} \leq |R_y(\tau_n,a)| \,,
$$
for $n$ sufficiently large. Hence,  we have
$$
 a \eu^{-\alpha(T(\tau_n,a)-\tau_n)} \leq \check c_1(a)
\eu^{\frac{p-1}{p}\, \ell \tau_n}
\,, \qquad%
$$
leading to
\begin{align*}
T(\tau_n,a) &\geq \frac{1}{\alpha} \ln \left(\frac{a}{\check c_1(a)}\right)  + \tau_n \left[ 1- \frac{1}{\alpha} \, \frac{p-1}{p}\, \ell \right]\\
& = \check C_1(\ee)  + \tau_n \left[ 1- \frac{\ell}{\ell_{p}^*}  \right]  \geq \check C_1(\ee)\,.
\end{align*}
which is in contradiction with
$\lim_{n\to+\infty} T(\tau_n,a) = -\infty$.
The proposition is thus proved.
\end{proof}


When $\ell >\ell_{p}^*$ (with the strict inequality), we find a more precise estimate.

\begin{proposition}\label{prop.lim}
Assume $\Hell$ with $\ell >\ell_{p}^*$.
Fix $\ee>0$ with $\ee<1/\alpha$ and define $a=a(\ee)$ as in~\eqref{def.a.hat}.

Assume that there is $\widetilde N(a)>0$ such that, for every $\tau<-\widetilde N(a)$, there exists a time $T(\tau,a)\in\R$ at which the trajectory
$\bs{\phi}(t; \tau, \QQ(\tau, a))$  crosses the $y$ negative semi-axis, and
$\dot{x}(t; \tau, \QQ(\tau, a))<0$
for any $\tau \le t \le T(\tau, a)$.
Then
$$
\lim_{\tau\to-\infty} T(\tau, a) = + \infty\,.
$$
\end{proposition}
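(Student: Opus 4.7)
The plan is to argue by contradiction. Suppose there is a sequence $\tau_n \to -\infty$ with $T(\tau_n, a) \le M$ for some $M \in \R$; by Proposition~\ref{prop.liminf} we have $\liminf_{\tau\to -\infty} T(\tau, a) > -\infty$, so, up to a subsequence, $T(\tau_n, a) \to T^* \in \R$. The strategy is to upper bound $\HH(\RR(\tau_n, a); T(\tau_n, a))$ by $C\,\eu^{\gamma \tau_n}$ for some exponent $\gamma > \ell^*_{p}$, and then combine this with the unconditional lower bound
\[
|R_y(\tau_n, a)| \;\ge\; a\,\eu^{-\alpha(T(\tau_n, a) - \tau_n)},
\]
which follows at once from $\dot y \le -\alpha y$ (valid because $K\,x^{q-1} \ge 0$), to force $T(\tau_n, a) \to +\infty$ and obtain the contradiction.

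For the energy upper bound I would use the identity~\eqref{Hder} and split the integral at the truncation level $\hat T_0$ from Remark~\ref{r.k}. On $[\tau_n, \hat T_0]$ the trajectory coincides with the one of the truncated system $\hat K$, which satisfies $\Hinc$ and $\Hstab$, so Remark~\ref{r.daprima} yields $|y(t)| \le a\,\eu^{-\alpha(t-\tau_n)/(1+\ee)}$; the hypothesis $\dot x < 0$ gives $\alpha x < |y|^{1/(p-1)}$, whence $x(t)^q \le X_0^q\,\eu^{-\mu(t-\tau_n)}$ with $\mu := \alpha q/[(p-1)(1+\ee)]$ and $X_0 := a^{1/(p-1)}/\alpha$. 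By Remark~\ref{r.why} we have $\mu > \ell^*_{p}$. Combining with the estimate $\dot K(t) < 2B\ell\,\eu^{\ell t}$ on $t \le \hat T_0$ from Remark~\ref{r.k}, a direct computation in the spirit of Lemma~\ref{l.H1leq2H0} bounds this first part of the integral by $C_1\,\eu^{\min(\ell,\mu)\tau_n}$, up to a harmless $|\tau_n|$ factor in the critical case $\ell = \mu$ (absorbed by a slightly reduced exponent still strictly greater than $\ell^*_{p}$).

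On the remaining interval $[\hat T_0, T(\tau_n, a)] \subseteq [\hat T_0, M]$, which is fixed and bounded, $\dot K$ is uniformly bounded because $K \in C^1$. The hypothesis $\dot x < 0$ forces $x$ to be monotonically decreasing along the trajectory, so $x(t) \le x(\hat T_0) \le X_0\,\eu^{-\mu(\hat T_0-\tau_n)/q}$ throughout; hence this part of the integral is $\le C_2\,\eu^{\mu \tau_n}$. Adding the two contributions and using the sharp asymptotics $\HH(\QQ(\tau_n, a); \tau_n) \sim c(a)\,\eu^{\ell \tau_n}$ from Remark~\ref{r.La} (each of the three rates being strictly greater than $\ell^*_{p}$), one obtains $\HH(\RR(\tau_n, a); T(\tau_n, a)) \le C_3\,\eu^{\gamma \tau_n}$ for some $\gamma > \ell^*_{p}$.

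To close the argument, I observe that $\HH(\RR; T) = \tfrac{p-1}{p}|R_y|^{p/(p-1)}$ turns the upper bound into $|R_y(\tau_n, a)| \le C_4\,\eu^{\gamma(p-1)\tau_n/p}$; comparing with the lower bound above and using $\ell^*_{p} = \alpha p/(p-1)$, one arrives at
\[
T(\tau_n, a) \;\ge\; \tau_n\left(1 - \frac{\gamma}{\ell^*_{p}}\right) + C_5,
\]
which tends to $+\infty$ as $\tau_n \to -\infty$ since $\gamma > \ell^*_{p}$, contradicting $T(\tau_n, a) \le M$. The main obstacle lies in controlling the integral on $[\hat T_0, T(\tau_n, a)]$, where the truncation of Remark~\ref{r.k} is no longer in force and $K$ carries no special structure beyond $C^1$ regularity: here the explicit hypothesis $\dot x < 0$ is indispensable, because it propagates the exponential smallness of $x(\hat T_0)$ throughout the remaining bounded interval without any further information on $K$.
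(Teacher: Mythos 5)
Your proof is correct and reaches the conclusion by the same overall Pohozaev-energy strategy, but the decomposition of the energy integral is genuinely different from the paper's. The paper fixes a constant $\delta\in{}]0,1[{}$ and an auxiliary exponent $\ell_1$ with $\ell_{p}^*<\ell_1<\mu:=\frac{1}{1+\ee}\frac{q\alpha}{p-1}$, and splits $\int_{\tau_n}^{T(\tau_n,a)}\dot K\,x^q/q\,dt$ at the \emph{moving} intermediate point $\delta\tau_n$: the piece over $[\tau_n,\delta\tau_n]$ is then dominated by a convergent integral on $[0,+\infty)$ exactly as in Lemma~\ref{l.H1leq2H0}, and the tail over $[\delta\tau_n,T(\tau_n,a)]$ is controlled via the finite quantity $\mathcal{I}_K(M)=\int_{-\infty}^M\max\{\dot K,0\}\,dt$ combined with the exponential smallness of $x(\delta\tau_n)$; both pieces then carry the uniform rate $\eu^{\ell_1\tau_n}$. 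You split instead at the \emph{fixed} truncation level $\hat T_0$. The trade-off is symmetric: on $[\tau_n,\hat T_0]$ the $t$-integral cannot be extended to $+\infty$, so you must compute directly and the rate that emerges is $\eu^{\min(\ell,\mu)\tau_n}$, with a spurious $|\tau_n|$ factor in the borderline case $\ell=\mu$ which you correctly absorb into a slightly reduced $\gamma$ still exceeding $\ell_{p}^*$ (possible precisely because both inequalities $\ell>\ell_{p}^*$ and $\mu>\ell_{p}^*$ are strict); in exchange, the tail interval is the fixed compact $[\hat T_0,M]$, on which $\dot K$ is bounded by mere $C^1$ regularity, so no auxiliary integral quantity is needed. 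Both routes rest on the same mechanism — $\dot x<0$ propagates the exponential smallness of $x$ at the split point through the unstructured tail where $K$ has no useful monotonicity — and you correctly identify this as the crux and the reason the explicit $\dot x<0$ hypothesis is indispensable.
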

\begin{proof}
Let $\check T :=\liminf_{\tau\to-\infty} T(\tau,a)\in\R\cup\{+\infty\}$, given by Proposition~\ref{prop.liminf}.
Without loss of generality, we choose
the value $\hat{T}_0 $ provided by Remark~\ref{r.k} so that $\hat{T}_0\leq \check T$.
Recalling Remark~\ref{r.why}, we first fix $\delta\in{}]0,1[{}$ satisfying
\begin{equation}\label{pre-ell0bis}
\ell_{p}^* < \frac{1-\delta}{1+\ee} \  \frac{q\alpha}{p-1}\,,
\end{equation}
and then $\ov\tau_{\delta}<-\widetilde N(a)$ such that
$$
\tau < \delta \tau < \hat{T}_0 -1\leq \check T-1< T(\tau,a) , \quad \mbox{for any }\tau<\ov\tau_{\delta}.
$$
Since $\ell  >  \ell_{p}^*$, we can introduce $\ell_1 \leq \ell$
  such that
\begin{equation}\label{ell0bis}
\ell_{p}^* < \ell_1 \leq \frac{1-\delta}{1+\ee} \  \frac{q\alpha}{p-1}  < \frac{1}{1+\ee} \  \frac{q\alpha}{p-1}\,.
\end{equation}

Consider the trajectory of system~\eqref{sist-din} departing from $\QQ(\tau, a)$ at the time $\tau$ and   the points
$$
\SSS(\tau, a) = {\bs \phi}(\delta\tau;\tau,\QQ(\tau, a))\,,
\qquad
\RR(\tau, a) = {\bs \phi}(T(\tau, a);\tau,\QQ(\tau, a))\,.
$$
When we focus our attention on the trajectory ${\bs \phi}(\cdot;\tau,\QQ(\tau, a))$ restricted to the interval $[\tau,\delta\tau]\subset{}]-\infty,\hat{T}_0[{}$, recalling the truncation argument in Remark~\ref{r.k}, we can assume that both $\Hinc$ and $\Hstab$ hold.
So, we can argue as in the proof of Lemma~\ref{l.H1leq2H0},
  to obtain  $\mathcal T(\ee,\delta)\leq \ov\tau_{\delta}$ such that
\begin{align}
\nonumber
\HH(\SSS(\tau, a);\delta \tau) &\, -\HH(\QQ(\tau, a); \tau)=
\\
\nonumber
&=\int_{\tau}^{\delta\tau} \,  \dot K(t) \frac{|x(t)|^q}{q}\, dt\leq
\\
\nonumber
&\leq \frac{2B \ell}{q\alpha^q} \int_{\tau}^{\delta \tau} \eu^{\ell_1 t} |y(t)|^{\frac{q}{p-1}} \, dt\leq
\\
\label{step3a}
&\leq \frac{2B \ell}{q\alpha^q} a^{\frac{q}{p-1}} \eu^{\ell_1 \tau}
 \int_{0}^{+\infty} \eu^{\left(\ell_1 - \frac{\alpha}{1+\ee} \, \frac{q}{p-1} \right) s}  \, ds = C_\HH(\ee) \ \eu^{\ell_1 \tau},
\end{align}
for any $\tau<\mathcal T(\ee,\delta)$.
Moreover, from~\eqref{daprima} and $\dot x(\delta\tau)<0$, we have
\begin{equation}\label{est.taudelta-a}
x(\delta\tau) < \frac{1}{\alpha} |y(\delta\tau)|^{\frac{1}{p-1}}
<\frac{1}{\alpha} \,a^{\frac{1}{p-1}}\, \eu^{\frac{1-\delta}{1+\ee}\frac{\alpha}{p-1}\tau}
\quad \mbox{for any }\tau<\mathcal T(\ee,\delta).
\end{equation}
Assume by contradiction that there is $M>0$ and a sequence $(\tau_n)_n$ such that $\tau_n\to-\infty$ and $T(\tau_n,a)\leq M$.

Let us now focus our attention on  $[\delta\tau_n, T(\tau_n,a)]\subset{}]-\infty,M]$.
We remark that the validity of  $\Hinc$ and $\Hstab$ is not guaranteed anymore in  $[\delta\tau_n, T(\tau_n,a)]$;
however $\dot x <0$, so $x(t) < x(\delta\tau_n)$ in this interval. Hence, we can compute
\begin{multline}
\HH(\RR(\tau_n, a);T(\tau_n, a)) \, - \HH(\SSS(\tau_n, a);\delta \tau_n)
=\int_{\delta\tau_n}^{T(\tau_n, a)} \,  \dot K(t) \frac{|x(t)|^q}{q}\, dt\leq
\\
\label{step3b}
\leq \frac{1}{q} \int_{\delta\tau_n}^{T(\tau_n, a)} \max\{ \dot K(t), 0\} |x(\delta\tau_n)|^q \, dt := \frac{\Delta(\tau_n)}{q\alpha^q}\,.
\end{multline}
Then, using \eqref{est.taudelta-a}, we find
\begin{align*}
  \Delta(\tau_n)
&  \leq  \, \left[\int_{-\infty}^{M} \max\{ \dot K(t), 0\} \, dt\right] a^{\frac{q}{p-1}}\, \eu^{\frac{1-\delta}{1+\ee} {\frac{q\alpha}{p-1}}\tau_n}= \\
& := \mathcal{I}_K(M) \, a^{\frac{q}{p-1}}\, \eu^{\frac{1-\delta}{1+\ee} {\frac{q\alpha}{p-1}}\tau_n}\,.
\end{align*}
Plugging this last inequality in~\eqref{step3b} and using~\eqref{ell0bis}, we obtain
\begin{equation}\label{step3c}
\begin{split}
 \HH(\RR(\tau_n, a);T(\tau_n, a)) &\, - \HH(\SSS(\tau_n, a);\delta \tau_n) \le \mathcal{I}_K(M) a^{\frac{q}{p-1}} \eu^{\ell_1 \tau_n}\,,
\end{split}
\end{equation}
for $n$ sufficiently large.

Hence, summing~\eqref{step3a} and~\eqref{step3c} with the   estimate  in Remark~\ref{r.La}, since $\ell\geq \ell_1$, we get the existence of a constant $\tilde C_\HH(\ee)>0$ satisfying
$$
\frac{p-1}{p} |R_y(\tau_n,a)|^{\frac{p}{p-1}} = \HH(\RR(\tau_n, a),T(\tau_n, a))
\leq  \tilde C_\HH(\ee) \eu^{\ell_1 \tau_n}\,.
$$

Finally, since $\dot y< -\alpha y$ in $[\tau_n,T(\tau_n,a)[{}$, and so $-\frac{1}{\alpha}\, \frac{\dot y}{y} <1$,  for a certain constant $\widetilde{C}_H(\ee)$, we obtain
 \begin{equation*}
    \begin{split}
       T(\tau_n,a)& > \tau_n -\frac{1}{\alpha} \int_{\tau_n}^{T(\tau_n,a)} \frac{\dot y(s)}{y(s)}\,ds\,=\\
       &= \tau_n -\frac{1}{\alpha}\int_{a}^{|R_y(\tau_n,a)|} \frac{dy}{y}  = \tau_n + \frac{1}{\alpha} \ln\left(\frac{a}{|R_y(\tau_n,a)|}\right)\geq\\
       &\geq \widetilde{C}_H(\ee) + \tau_n\left( 1- \frac{\ell_1}{\ell_{p}^*} \right) \,,
    \end{split}
  \end{equation*}
when $n$ is large.
Recalling~\eqref{ell0bis}, we get $\lim_{n\to+\infty} T(\tau_n,a)= +\infty$, giving a contradiction. The Proposition is thus proved.
\end{proof}

\begin{proposition}\label{p.parziale}
Assume $\Hell$ and $\Hinc$.
Then, $J={}]0,+\infty[{}$ and
\begin{itemize}
  \item  if $\ell= \ell_{p}^*$, then $\displaystyle{\liminf_{d \to +\infty} R(d)>0}$,
  \item  if $\ell> \ell_{p}^*$, then $\displaystyle{\lim_{d \to +\infty} R(d)=+\infty}$.
\end{itemize}
\end{proposition}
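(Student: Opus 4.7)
The plan is to combine three already-established ingredients: the classical fact that $J={}]0,+\infty[$ under $\Hinc$, the Lemma~\ref{l.corresp.new} dictionary turning "$d\to+\infty$" into "$\tau\to-\infty$", and the asymptotic lower bounds for $T(\tau,a)$ contained in Propositions~\ref{prop.liminf} and~\ref{prop.lim}.

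First, that $J = {}]0,+\infty[{}$ follows immediately from Proposition~\ref{p.classical.Poho}: under $\Hinc$ every regular solution $u(r;d)$ has a first zero, so every $d>0$ lies in $J$. Next I fix $\ee\in{}]0,1/\alpha[{}$ and the corresponding $a=a(\ee)$ as in~\eqref{def.a.hat}, which coincides with~\eqref{def.a} once we apply the truncation of Remark~\ref{r.k}. By Remark~\ref{r.La} the intersection point $\QQ(\tau,a)\in W^u(\tau)\cap\linea(a)$ is well defined and transversal for $\tau<-N(a)$. Lemma~\ref{l.corresp.new} then gives a continuous decreasing bijection $d_{\linea(a)}\colon{}]-\infty,-N(a)[{}\to{}]D,+\infty[{}$ with continuous inverse $\tau_{\linea(a)}$, and $\bs{\phi}(\,\cdot\,;d_{\linea(a)}(\tau))\equiv\bs{\phi}(\,\cdot\,;\tau,\QQ(\tau,a))$. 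In particular, the first zero satisfies
$$
R(d) \,=\, \eu^{\,T(\tau_{\linea(a)}(d),\,a)}, \qquad \tau_{\linea(a)}(d)\to-\infty \text{ as } d\to+\infty.
$$

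For the critical case $\ell=\ell^*_p$, Proposition~\ref{prop.liminf} yields $\liminf_{\tau\to-\infty}T(\tau,a)>-\infty$, and composing with $\tau_{\linea(a)}$ (continuous, sending $+\infty$ to $-\infty$) produces $\liminf_{d\to+\infty}R(d)>0$, as required. For the supercritical case $\ell>\ell^*_p$, I want to invoke Proposition~\ref{prop.lim}; its hypotheses require that for every sufficiently negative $\tau$ the trajectory $\bs{\phi}(t;\tau,\QQ(\tau,a))$ actually reaches the negative $y$-axis at some finite $T(\tau,a)$ and that $\dot{x}<0$ throughout $[\tau,T(\tau,a)]$. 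Finiteness of $T(\tau,a)$ is supplied by Proposition~\ref{p.classical.Poho} applied to the regular solution $u(r;d_{\linea(a)}(\tau))$. For the sign of $\dot x$, Remark~\ref{r.La} ensures $\dot{x}(\tau;\tau,\QQ(\tau,a))<0$ for $\tau\le -N(a)$, and Lemma~\ref{l.dots} tells us that the unique sign change $T_x(d)$ of $\dot x$ precedes the first zero $T(d)$; so necessarily $\tau> T_x(d_{\linea(a)}(\tau))$, whence $\dot x<0$ on the whole interval $[\tau,T(\tau,a)]$.

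With both hypotheses checked, Proposition~\ref{prop.lim} gives $\lim_{\tau\to-\infty}T(\tau,a)=+\infty$, which translates via the parametrization into $\lim_{d\to+\infty}R(d)=+\infty$. The only delicate point in the whole plan is the verification of the sign condition on $\dot x$ needed to apply Proposition~\ref{prop.lim}, but this is precisely what the combination of $\Hinc$, Lemma~\ref{l.dots} and the geometric definition of $\linea(a)$ is designed to provide; once it is in place, the statement of the Proposition follows without further calculation.
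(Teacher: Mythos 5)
Your proof is correct and follows essentially the same route as the paper's: establish $J={}]0,+\infty[$ via Proposition~\ref{p.classical.Poho}, dictionary $d\leftrightarrow\tau$ via Lemma~\ref{l.corresp.new}, then apply Proposition~\ref{prop.liminf} for $\ell=\ell_p^*$ and Proposition~\ref{prop.lim} for $\ell>\ell_p^*$. You are a bit more explicit than the paper in verifying the $\dot x<0$ hypothesis of Proposition~\ref{prop.lim} (combining Remark~\ref{r.La} and the monotonicity from Lemma~\ref{l.dots}), but this is the same idea the paper invokes by citing Proposition~\ref{p.classical.Poho} and Lemma~\ref{l.dots}.
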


\begin{proof}
As stated in Proposition \ref{p.classical.Poho},
$J={}]0,+\infty[{}$ is a well-known consequence of assumption $\Hinc$.

Let $\ep < \frac{1}{\alpha}$, define $a=a(\ee)$ as in~\eqref{def.a.hat}, and set $N=N(a)$ as in Remark~\ref{r.La}. So, $W^u(\tau)$ crosses transversely $\linea(a)$ in $\QQ(\tau, a)$
 for any
$\tau<-N$.

From Proposition~\ref{p.classical.Poho} and Lemma~\ref{l.dots},
we see that for every $\tau<-N$ there is $T(\tau, a)$ such that
$\dot{x}(t; \tau , \QQ(\tau, a))<0$ for any $\tau \le t \le T(\tau, a)$,
and $\bs{\phi}(t; \tau , \QQ(\tau, a))$ crosses the $y$ negative semi-axis at $t=T(\tau,a)$.

Now, recalling Lemma~\ref{l.corresp.new}, we consider the function $d_{\linea(a)}:
{}]-\infty,- N[{} \to {}]D, +\infty[{}$ such that $u(r;d_{\linea(a)}(\tau))$ is the solution of~\eqref{eq.pla}
corresponding to
\linebreak
$\bs{\phi}(t; \tau , \QQ(\tau, a))$ via~\eqref{transf1}, and its inverse $\tau_{\linea(a)}$ satisfying $\lim_{d \to +\infty} \tau_{\linea(a)}(d)= -\infty$.

If $\ell \geq\ell_{p}^*$, Proposition~\ref{prop.liminf} gives $\liminf_{\tau \to -\infty}T(\tau,a)>-\infty$. Arguing as in the proof of Proposition~\ref{p.sub},
we have
\begin{equation}\label{lim1}
\liminf_{d\to+\infty} R(d) = \liminf_{d\to+\infty} \eu^{T(\tau_{\linea(a)}(d),a)}= \liminf_{\tau\to -\infty} \eu^{T(\tau, a)} >0\,.
\end{equation}

If $\ell >\ell_{p}^*$, we are able to apply Proposition~\ref{prop.lim} and get
\begin{equation}\label{lim2}
\lim_{d\to+\infty} R(d) = \lim_{d\to+\infty} \eu^{T(\tau_{\linea(a)}(d),a)}= \lim_{\tau\to -\infty} \eu^{T(\tau, a)} =+\infty\,.
\end{equation}
The proposition is thus proved.
\end{proof}
Theorem \ref{t.main.due} follows from Proposition \ref{p.parziale}, cf. \S \ref{proof} for more details.

In order to prove the second part of Theorem \ref{t.main}, we   need to remove
the monotonicity assumption from Proposition \ref{p.parziale}.
Note that if $\Hinc$ does not hold, we cannot ensure
 the existence of points of $J$ in a neighborhood of $+\infty$,
when $\ell \geq  \ell_{p}^*$.
 However, we can prove a weaker result.

\begin{proposition}\label{p.nuova}
Assume $\Hell$, and define
$$
\tilde R(d) = \begin{cases}
R(d) & \mbox{if } d\in J\,,\\
+\infty & \mbox{if } d\notin J\,.
\end{cases}
$$
If $\ell \geq \ell_{p}^*$, then there is $\RDIR_0>0$ such that  $\tilde R(d) \ge \RDIR_0$ for any $d\geq 0$.
\end{proposition}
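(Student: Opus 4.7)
The plan is to argue by contradiction. Assume the existence of a sequence $(d_n)_n\subset J$ with $R(d_n)\to 0$; the goal is to derive a contradiction by splitting according to the asymptotic behaviour of $d_n$. Up to subsequence I may assume $d_n\to d^*\in[0,+\infty]$, and I intend to rule out every possible value of $d^*$. Note that $\Hell$ implies $\Hinstab$, so all the tools of~\S\ref{basicRd} and~\S\ref{S.super} are available.

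The cases $d^*\in(0,+\infty)$ and $d^*=0$ are handled by the basic theory of~\S\ref{basicRd}. When $d^*\in J$, Proposition~\ref{p.Rcontinua} (continuity of $R$ on the open set $J$) yields $R(d_n)\to R(d^*)>0$, contradicting $R(d_n)\to 0$. When $d^*\in(0,+\infty)$ but $d^*\notin J$, then $d^*$ is an accumulation point of $J$ lying outside $J$, and Proposition~\ref{p.daD} forces $R(d_n)\to+\infty$, again impossible. Finally, $d^*=0$ forces $\inf J=0$, so Proposition~\ref{p.da0} sends $R(d_n)\to+\infty$, a contradiction.

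The heart of the argument is the case $d^*=+\infty$, which I would handle by means of Proposition~\ref{prop.liminf}. Fix $\varepsilon\in(0,1/\alpha)$ and let $a=a(\varepsilon)$ be defined by~\eqref{def.a.hat}. Lemma~\ref{l.corresp.new} produces, for $n$ large enough, the time $\tau_n:=\tau_{\linea(a)}(d_n)$, which is well defined and satisfies $\tau_n\to-\infty$. The key identification is $T(\tau_n,a)=\ln R(d_n)$: since $\bs{\phi}(\cdot;d_n)\equiv \bs{\phi}(\cdot;\tau_n,\QQ(\tau_n,a))$ and $d_n\in J$ guarantees $x(t;d_n)>0$ on the whole interval $(-\infty,\ln R(d_n))$, the first crossing of the negative $y$-semi-axis after $\tau_n$ occurs exactly at $\ln R(d_n)$. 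Proposition~\ref{prop.liminf}, which is proved under the sole hypothesis $\Hell$ with $\ell\geq\ell_p^*$, then gives $\liminf_n T(\tau_n,a)>-\infty$ and therefore $\liminf_n R(d_n)>0$, contradicting the standing assumption.

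Once every case has been ruled out, the constant $\RDIR_0:=\inf_{d\in J}R(d)>0$ is the sought bound, since $\tilde R(d)\geq \RDIR_0$ on $J$ by definition and $\tilde R(d)=+\infty\geq \RDIR_0$ on the complement of $J$. The only genuinely delicate point is the $+\infty$ case, and it works precisely because Proposition~\ref{prop.liminf} was established without invoking $\Hinc$ via the truncation device of Remark~\ref{r.k}; the remaining cases are a straightforward continuity-plus-boundary-asymptotics package for $R$ on its natural domain $J$.
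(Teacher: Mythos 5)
Your proof is correct, but it follows a genuinely different route from the paper's. Both arguments hinge on the same key ingredient, Proposition~\ref{prop.liminf}, to control the large-$d$ regime, but they organize the rest differently. The paper argues directly: it fixes a small $\tau_0\ll 0$ and a threshold $D^*=d_{\linea(a)}(\tau_0)$, then shows $\tilde R(d)\ge \eu^{\tau_0}$ for $d\le D^*$ (because the corresponding trajectory stays in the region of positivity near the frozen homoclinic up to time $\tau_0$, via Remark~\ref{r.zerotoo} and Lemma~\ref{l.vicine}) and $\tilde R(d)\ge\eu^{T'}$ for $d>D^*$ (because $T(\tau,a)\ge T'$ uniformly in $\tau<-N(a)$, which follows from Proposition~\ref{prop.liminf} after noticing that on the compact range $\tau\in[\tau_1,-N(a)]$ one also has $T(\tau,a)>\tau\ge\tau_1$). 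You instead argue by contradiction with a sequence $(d_n)\subset J$ with $R(d_n)\to 0$, extract a subsequential limit $d^*\in[0,+\infty]$, and dispatch each possibility. The cases $d^*\in{}]0,+\infty[{}$ and $d^*=0$ are handled by Propositions~\ref{p.Rcontinua},~\ref{p.da0},~\ref{p.daD} — tools the paper's proof does not need here — while the case $d^*=+\infty$ reproduces the paper's essential calculation via Lemma~\ref{l.corresp.new} and Proposition~\ref{prop.liminf}, using the correct identification $T(\tau_n,a)=\ln R(d_n)$ for $d_n\in J$. Your approach is a bit longer and uses more machinery for the bounded cases, but it has the advantage of not requiring the unstable-manifold localization step for small $d$: the already-established boundary asymptotics of $R$ on $J$ do the work. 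The paper's version is more economical and yields an explicit constant $\RDIR_0=\min\{\eu^{\tau_0},\eu^{T'}\}$ rather than a nonconstructive infimum.
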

\begin{proof}
  Fix $0<\ee<1/\alpha$, $a=a(\ee)$ as in~\eqref{def.a.hat} and set $N(a)$ as in Remark \ref{r.La}.
  Let $\hat{T}_0$ be as in Remark \ref{r.k}.
From Proposition \ref{prop.liminf} we deduce that
\begin{equation}\label{aaa}
\mbox{there is $T' \le \hat{T}_0$ such that $T(\tau,a) \ge T'$ for any } \tau < -N(a)\,.
\end{equation}




  Now, let us fix $\tau_0<-N(a)$ and  denote by $\tilde{W}^u(\tau_0)$ the branch of the unstable manifold $W^u(\tau_0)$ between the origin and
 $\Q(\tau_0,a)$.  From Remark \ref{r.corresp} we see that there is $D^*>0$ such that the function
  $\QQQ_{\bs{\tau_0}}(d)$ restricted to $d \in[0, D^*]$ gives a parametrization of $\tilde{W}^u(\tau_0)$, i.e.  $\QQQ_{\bs{\tau_0}}:[0,D^*]\to \tilde{W}^u(\tau_0)$
  is a continuous bijective function.

 Using Remark \ref{r.zerotoo} and Lemma \ref{l.vicine}, reducing $\tau_0$ if necessary, we see that if $\Q \in \tilde{W}^u(\tau_0)$ then
 $\bs{\phi}(t;\tau_0 , \Q)$ is close to the corresponding trajectory in $W^u(-\infty)$ when $t \le \tau_0$; in particular
 $x(t;\tau_0 , \Q)>0$ if $t\le \tau_0$. Hence $\tilde R(d) \ge \eu^{\tau_0}$ for any $d \le D^*$.

Let now $d>D^*$. Recalling Remark \ref{r.corresp}, we have $\QQQ_{\bs{\tau_0}}(d) \in
  W^u(\tau_0) \setminus \tilde{W}^u(\tau_0)$, and using
Lemma \ref{l.corresp.new}  we see that  there is a unique
 $\tau_{\linea(a)}(d)<\tau_0$ such that the trajectory
$\bs{\phi}(t;\tau_0, \QQQ_{\bs{\tau_0}}(d))$ crosses transversely $\linea(a)$ at $t= \tau_{\linea(a)}(d)<\tau_0< -N(a)$.
  Then, from \eqref{aaa} we see that
  $$T(\tau_{\linea(a)}(d),a)  \ge T' \qquad \mbox{for any } d > D^* \,.$$
  Hence $\tilde R(d) \ge \eu^{T'}$ for any $d > D^*$.

  Then, setting $\RDIR_0 = \min \{\eu^{\tau_0} , \eu^{T'} \}$, the Proposition is proved.
\end{proof}

\bigbreak

Now, we reprove Proposition~\ref{p.parziale} by replacing the global assumption $\Hinc$ with the local assumption $\ell \le \frac{n}{p-1}$.
Motivated by~\cite{BE1, CL97,LL},
and inspired by~\cite{FrancaTMNA} and~\cite{FrancaFE}, we obtain the following result.

\begin{lemma}\label{l.new.technical}
  Assume $\Hell$ with $\ell \le \frac{n}{p-1}$; assume further either $\Hstab$ or that
 there is $\rho_1>0$ such that $\K(r) \ge \K(\rho_1)$ when $r \ge \rho_1$.
Then, there is $\hat D>0$ such that ${}]\hat D,+\infty[{} \subset J$.
\end{lemma}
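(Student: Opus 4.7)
The proof is by contradiction. Suppose there exists a sequence $d_n\to+\infty$ with $d_n\notin J$, so that $u(r;d_n)>0$ for all $r\ge 0$. The asymptotic hypothesis on $\K$ (either $\Hstab$ or $\K(r)\ge\K(\rho_1)$ for $r\ge\rho_1$), combined with the classical decay analysis for non-crossing positive solutions of the critical $p$-Laplace equation (cf.\ \cite{FrancaFE}), forces the Fowler trajectory $\bs{\phi}(t;d_n)$ to admit a finite $\omega$-limit as $t\to+\infty$: either the origin (fast-decay ground state) or the equilibrium $\EE(+\infty)$ (slow-decay ground state). In both cases the energy limit $\HH_\infty^n:=\lim_{t\to+\infty}\HH(\bs{\phi}(t;d_n);t)$ is $\le 0$. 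Integrating~\eqref{Hder} on $\R$ and using $\HH(\bs{\phi}(t;d_n);t)\to 0$ as $t\to-\infty$ (Lemma~\ref{l.basic}) gives the Pohozaev-type identity
$$\int_{-\infty}^{+\infty}\dot K(t)\,\frac{x(t;d_n)^q}{q}\,dt=\HH_\infty^n\le 0,$$
which the plan is to contradict by showing the left-hand side is strictly positive of specific order for $n$ large.

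Fix $a>0$ as in Remark~\ref{r.La} and set $\tau_n:=\tau_{\linea(a)}(d_n)$, which tends to $-\infty$ as $n\to+\infty$ by Lemma~\ref{l.corresp.new}. Since $\bs{\phi}(\tau_n;d_n)=\QQ(\tau_n,a)$ by construction, the portion of the Pohozaev integral on $(-\infty,\tau_n]$ coincides with $\HH(\QQ(\tau_n,a);\tau_n)$, and Remark~\ref{r.La} yields
$$\int_{-\infty}^{\tau_n}\dot K(t)\,\frac{x(t;d_n)^q}{q}\,dt=c(a)\,\eu^{\ell\tau_n}+o(\eu^{\ell\tau_n}),\qquad c(a)>0.$$

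The bulk of the work is therefore to establish that the tail contribution is of smaller order, namely $\int_{\tau_n}^{+\infty}\dot K(t)\,x(t;d_n)^q/q\,dt=o(\eu^{\ell\tau_n})$. I would change variables back to $r=\eu^t$ so that the tail integrand becomes $r^n\K'(r)u(r;d_n)^q/q$, and bound $u(r;d_n)$ above by an Aubin--Talenti envelope $u(r;d_n)\le C\,d_n\bigl(1+c\mu_n r^{p/(p-1)}\bigr)^{-\alpha}$ with concentration scale $\mu_n\sim d_n^{p/(n-p)}$; a rescaling $s=r\mu_n$ reduces the tail to $\eu^{\ell\tau_n}$ times an $s$-integral of the form
$$\int_{\mu_n\eu^{\tau_n}}^{+\infty}s^{n+\ell-1}\bigl(1+cs^{p/(p-1)}\bigr)^{-n}\,\Psi_n(s/\mu_n)\,ds,$$
where $\Psi_n$ collects bounded $\K'$-factors. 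The constraint $\ell\le n/(p-1)$ is exactly what makes this $s$-integral convergent at $+\infty$ (strictly in the subcritical range, with a logarithmic correction at equality that is absorbable in the $o(\eu^{\ell\tau_n})$ remainder), and the alternative assumptions on $\K$ at infinity ensure $\Psi_n$ is dominated uniformly in $n$. Combining this with the previous step forces $c(a)\eu^{\ell\tau_n}(1+o(1))\le 0$ for $n$ large, a contradiction, whence $]\hat D,+\infty[{}\subset J$ for some $\hat D>0$.

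The main obstacle is the tail estimate: in the absence of $\Hinc$, $\dot K$ may change sign on $[\tau_n,+\infty)$, so one cannot simply conclude by monotonicity. The envelope-and-rescaling scheme must be pushed to the borderline $\ell=n/(p-1)$, and the technical assumption $\K(r)\ge\K(\rho_1)$ at infinity (alternatively $\Hstab$) is precisely what prevents $\K$ from degenerating and spoiling the uniform domination of $\Psi_n$.
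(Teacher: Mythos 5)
Your route is genuinely different from the paper's: you attack the problem through the global Pohozaev identity, splitting it at $\tau_n=\tau_{\linea(a)}(d_n)$ and estimating the two pieces, whereas the paper builds a phase-plane trapping region $\mathcal{C}(a,\tau)$ bounded by the stable leaf $W^s(\tau)$, shows (Lemma~\ref{l.segnoHs}, via the decay bounds of Lemma~\ref{l.abovebelow}) that any point of $\hat W^s(\tau)$ has $\HH<0$ for $\tau$ very negative, and then concludes that the unstable trajectory through $\QQ(\tau,a)$, which has $\HH>0$ by Remark~\ref{r.La}, cannot be trapped and must cross the negative $y$-axis (Lemmas~\ref{l.tecnico1},~\ref{l.tecnico2}). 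The paper never needs a classification of $\omega$-limits.

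There is, however, a genuine gap in your argument, and it sits precisely at the point where the paper's trapping construction does the real work. You list the slow-decay alternative (trajectory converging to $\EE(+\infty)$) as one of the two admissible $\omega$-limits, but your Aubin--Talenti envelope $u(r;d_n)\le C\,d_n\bigl(1+c\mu_n r^{p/(p-1)}\bigr)^{-\alpha}$ decays like $r^{-(n-p)/(p-1)}$, while a slow-decay ground state behaves like $u\sim E_x\,r^{-(n-p)/p}$, which is \emph{strictly slower} for every $p>1$. Hence the envelope fails for exactly the case you need to exclude; the rescaled $s$-integral does not converge, and in fact the tail $\int_{\tau_n}^{+\infty}\dot K(t)\,x^q/q\,dt$ in this case equals $\HH_\infty^n-\HH(\QQ(\tau_n,a);\tau_n)=\HH(\EE(+\infty);+\infty)-c(a)\eu^{\ell\tau_n}(1+o(1))$, which is a fixed negative $O(1)$ quantity, not $o(\eu^{\ell\tau_n})$. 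The resulting inequality $c(a)\eu^{\ell\tau_n}(1+o(1))+O(1)\le 0$ is perfectly consistent when the $O(1)$ term is negative, so there is no contradiction, and slow-decay ground states with arbitrarily large $d$ are not ruled out. Even in the fast-decay case, the envelope is asserted rather than proven: for the non-autonomous system the pointwise bound with the exact Aubin--Talenti profile and the scale $\mu_n\sim d_n^{p/(n-p)}$ requires an argument; the paper's Lemma~\ref{l.abovebelow} obtains a comparable exponential decay estimate, but only for trajectories already known to lie in $\hat W^s(\tau)$, i.e.\ after the trapping machinery has been set up. Finally, the claimed classification of $\omega$-limits into exactly $\{(0,0),\EE(+\infty)\}$ is itself too strong under $\Hstab$ alone, since $K$ need not converge at $+\infty$ and bounded trajectories may fail to settle on a single point.
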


Notice that if $\Hell$ with $\ell \le \frac{n}{p-1}$ holds and $\lim_{r \to +\infty} \K(r)$ exists and it is positive, either bounded or
unbounded, or if $K(t)$ is asymptotically periodic, then Lemma~\ref{l.new.technical} applies.

The proof of Lemma~\ref{l.new.technical}  is rather technical and it is postponed to Section~\ref{S.new.technical}, as well as the related proof of the following adapted version of Proposition \ref{p.parziale}.

\begin{proposition}\label{p.parzialebis}
  Assume $\Hell$ with $\ell_{p}^*<\ell \le \frac{n}{p-1}$; assume further either $\Hstab$ or that
 there is $\rho_1>0$ such that $\K(r) \ge \K(\rho_1)$ when $r \ge \rho_1$.
Then $$\displaystyle{\lim_{d \to +\infty} R(d)=+\infty}\,.$$
\end{proposition}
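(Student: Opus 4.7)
My plan is to mirror the proof of Proposition~\ref{p.parziale}, with Lemma~\ref{l.new.technical} taking over the role played there by $\Hinc$: it provides the tail $]\hat D,+\infty[{}\subset J$ of initial data for which $u(r;d)$ is a crossing solution. First I would fix $\ee\in {}]0,1/\alpha[{}$, set $a=a(\ee)$ as in~\eqref{def.a.hat}, and take $N(a)$ as in Remark~\ref{r.La}, so that the intersection $\QQ(\tau,a)\in W^u(\tau)\cap\linea(a)$ is well defined for every $\tau<-N(a)$. Combining Lemma~\ref{l.new.technical} with Lemma~\ref{l.corresp.new} applied to $\linea=\linea(a)$, for all sufficiently negative $\tau$ one has $d_{\linea(a)}(\tau)\in J$, the trajectory $\bs{\phi}(t;\tau,\QQ(\tau,a))$ corresponds to the regular solution $u(r;d_{\linea(a)}(\tau))$, and the first zero of the latter satisfies $R(d_{\linea(a)}(\tau))=\eu^{T(\tau,a)}$. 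Since $d_{\linea(a)}(\tau)\to+\infty$ if and only if $\tau\to-\infty$, the statement $R(d)\to+\infty$ is equivalent to $T(\tau,a)\to+\infty$ as $\tau\to-\infty$.

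The natural tool for this limit is Proposition~\ref{prop.lim}, which in the regime $\ell>\ell^*_{p}$ gives exactly $\lim_{\tau\to-\infty}T(\tau,a)=+\infty$. Its two hypotheses are: (i) the trajectory reaches the negative $y$-semiaxis in finite time $T(\tau,a)$ for all sufficiently negative $\tau$, and (ii) $\dot{x}(t;\tau,\QQ(\tau,a))<0$ throughout $[\tau,T(\tau,a)]$. Part (i) is immediate from the previous step, since $d_{\linea(a)}(\tau)\in J$ for $\tau$ sufficiently negative.

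Verifying (ii) is the main obstacle. Under $\Hinc$ it followed from Lemma~\ref{l.dots}, whose proof used the non-negativity of $\HH(\bs{\phi}(t);t)$ to confine the trajectory to the exterior of the frozen homoclinic $\bs{\Gamma_t}$ and below the isocline $\{\alpha x=|y|^{1/(p-1)}\}$; in that region $\dot x<0$ automatically. Here we do not know the energy is monotone along the trajectory, so a priori $\HH(\bs{\phi}(t);t)$ could drop below $0$ and $\dot x$ could change sign. My plan is to adapt the energy-comparison techniques of Lemmas~\ref{l.H1leq2H0},~\ref{l.H1leq2H0-bis} and of the proof of Proposition~\ref{prop.lim}: on a left neighbourhood of $-\infty$ the truncation of Remark~\ref{r.k} restores $\Hinc$ and hence the conclusions of Lemma~\ref{l.dots}; beyond the truncation horizon, the alternative hypothesis (either $\Hstab$ or $\K(r)\ge \K(\rho_1)$ for $r\ge\rho_1$) would be used, through the identity~\eqref{Hder}, to bound the positive part of $\int \dot K\,|x|^q/q$ in terms of the asymptotic estimates from Remarks~\ref{r.La} and~\ref{r.daprima}, keeping $\HH(\bs{\phi}(t);t)$ strictly positive. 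This should confine the trajectory to the region outside $\bs{\Gamma_t}$ and below the isocline until it hits $\{x=0\}$, so that $\dot x<0$ persists throughout $[\tau,T(\tau,a)]$.

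Once (ii) is established, Proposition~\ref{prop.lim} delivers $T(\tau,a)\to+\infty$, and the correspondence of the first paragraph gives $\lim_{d\to+\infty}R(d)=+\infty$. The hardest point will be the propagation of $\dot x<0$ beyond the truncation horizon using only the weak tail hypotheses; this is the technical content of Section~\ref{S.new.technical}, closely intertwined with the proof of Lemma~\ref{l.new.technical} itself.
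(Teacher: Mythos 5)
Your top-level plan is the same as the paper's: pass from $d$ to $\tau$ via $d_{\linea(a)}$, use Lemma~\ref{l.new.technical} to guarantee a tail $]\hat D,+\infty[{}\subset J$, and then invoke Proposition~\ref{prop.lim} after verifying its two hypotheses. You also correctly pinpoint hypothesis~(ii) --- that $\dot{x}(t;\tau,\QQ(\tau,a))<0$ up to $T(\tau,a)$ --- as the crux, and you correctly note that it is no longer automatic once $\Hinc$ is dropped.

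The mechanism you propose for~(ii), however, is not the one the paper uses, and I do not think it can be made to work. You suggest bounding $\int \dot K\,|x|^q/q$ so as to keep $\HH(\bs{\phi}(t);t)$ strictly positive past the truncation horizon $\hat T_0$, which would confine the trajectory below the isocline. But neither $\Hstab$ nor the condition $\K(r)\ge \K(\rho_1)$ for $r\ge\rho_1$ controls the negative part of $\dot K$: both allow $K$ to dip substantially on a finite window, during which the Pohozaev energy $\HH(\bs{\phi}(t);t)$ can become negative. The paper instead sidesteps energy monotonicity entirely in Lemmas~\ref{l.tecnico1} and~\ref{l.tecnico2}: one builds the compact ``box'' $\mathcal{C}(a,\tau)$ bounded by $\linea(a)$, the negative $y$-axis and the branch $\tilde W^s(\tau)$ of the stable leaf, and a sign argument (Lemma~\ref{l.segnoHs}: $\HH(\QQ;\tau)<0$ on $\hat W^s(\tau)$, versus $\HH(\QQ(\tau,a);\tau)>0$ from Remark~\ref{r.La}) forces the trajectory to leave this box through $\{x=0\}$ rather than through the stable leaf; the confinement to $\mathcal{C}(a,t)$ is what delivers $\dot x<0<\dot y$ along the way, with no reference to the sign of $\HH(\bs{\phi}(t);t)$. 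In fact, the statements of Lemmas~\ref{l.tecnico1}/\ref{l.tecnico2} already include exactly the $\dot x<0$ conclusion you are trying to re-derive; the paper's proof of Proposition~\ref{p.parzialebis} is short precisely because it cites them directly, and you should do the same rather than attempt an energy-positivity argument that the hypotheses do not support.
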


\subsubsection{Proof of Lemma~\ref{l.new.technical} and Proposition~\ref{p.parzialebis}.}\label{S.new.technical}

We start by proving Lemma~\ref{l.new.technical} under assumption $\Hstab$:
the following arguments are preliminary to the proof under this hypothesis.
    The alternative case  where it is assumed that $\K(r) \ge \K(\rho_1)$ is then obtained as a Corollary.

\medbreak

Let us assume $\Hstab$, so that we can construct the stable manifold $W^s(\tau)$, see \S\ref{basic} and, in particular,~\eqref{wuu}.

To develop our construction we need to define  several sets, and we invite the reader to follow the argument on Figure~\ref{fig.tecnico}.
Assume $\Hell$ and $\Hstab$,  then
  for any $\tau \in \R$ we set
\begin{align*}
    \ov{\HH}(x,y)&= \alpha xy + \frac{p-1}{p} |y|^{\frac{p}{p-1}} + \ov{K}\,\frac{|x|^q}{q}\,,
    &&
    \\
    \und{\HH}(x,y)&= \alpha xy + \frac{p-1}{p} |y|^{\frac{p}{p-1}} + \und{K}\,\frac{|x|^q}{q}\,.
    &&
\end{align*}
We define $\ov{\Gamma}:=\{(x,y) \mid \ov{\HH}(x,y)=0 \; x \ge 0 \}$ and
$\und{\Gamma}:=\{(x,y) \mid \und{\HH}(x,y)=0 \; x \ge 0  \}$. Notice that both $\ov{\Gamma}$ and $\und{\Gamma}$ are the image
of closed regular curves; we denote by $\ov{F}$ and by $\und{F}$ the bounded sets enclosed by
$\ov{\Gamma}$ and $\und{\Gamma}$, respectively: notice that $\ov{F}\subset\und{F}$.
We denote by $\bs{\ov{G}}= (\ov{G}_x,\ov{G}_y)$ the (transversal) intersection between  $\ov{\Gamma}$ and the isocline $\dot{x}=0$ such that $\ov{G}_x>0$.
Then, we denote by $\bs{\und{G}}$ the intersection between the line $x=\ov{G}_x$ and $\und{\Gamma}$ contained in $\dot{x}<0$ and by
$\mathcal{G}$ the vertical segment between $\bs{\und{G}}$ and $\bs{\ov{G}}$. Moreover, we denote by $\partial \mathcal{B}^{\uparrow}$ the branch of
$\ov{\Gamma}$ between the origin and  $\bs{\ov{G}}$ contained in $\dot{x} \le 0$
 and by $\partial \mathcal{B}^{\downarrow}$ the branch of
$\und{\Gamma}$ between the origin and  $\bs{\und{G}}$ contained in $\dot{x} \le 0$.
Finally, we denote by $\mathcal{B}$ the compact set enclosed by $\mathcal{G}$, $\partial \mathcal{B}^{\uparrow}$ and $\partial \mathcal{B}^{\downarrow}$, see Figure~\ref{fig.tecnico}.

We emphasize that if $\bs{\phi}(t)=(x(t),y(t))$ is a trajectory of~\eqref{sist-din}, we find,
according to \eqref{ripristi},
\begin{align*}
   \frac{d}{dt} \,\ov{\HH}(\bs{\phi}(t))
   &=  \big( \,\ov{K}-K(t) \big)  \,  x(t)\,| x(t)|^{q-2} \,\, \dot{x}(t)\,, \\
   \frac{d}{dt}\,  \und{\HH}(\bs{\phi}(t))
   &=  \big( \,\und{K}-K(t) \big)  \,  x(t)\,| x(t)|^{q-2} \,\, \dot{x}(t)\,.
\end{align*}
Using this fact, we easily obtain the following crucial remark
\begin{remark}\label{r.box}
  Assume $\Hell$ and $\Hstab$, then the flow of~\eqref{sist-din}  on $\breve{\mathcal{G}}:=\mathcal{G} \setminus \{\bs{\ov{G}}, \bs{\und{G}} \}$
  aims towards the interior of $\mathcal{B}$ for any $t \in \R$, while on $(\partial \mathcal{B}^{\uparrow} \cup \partial \mathcal{B}^{\downarrow}) \setminus \{(0,0) \}$
  aims towards the exterior of $\mathcal{B}$ for any $t \in \R$.
\end{remark}

\begin{figure}[t]
\centerline{
\epsfig{file=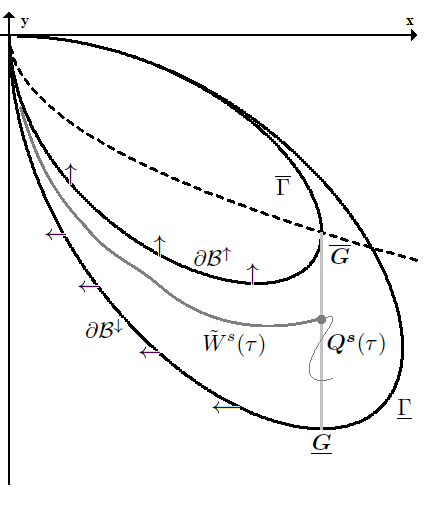, height = 6 cm}
\qquad
\epsfig{file=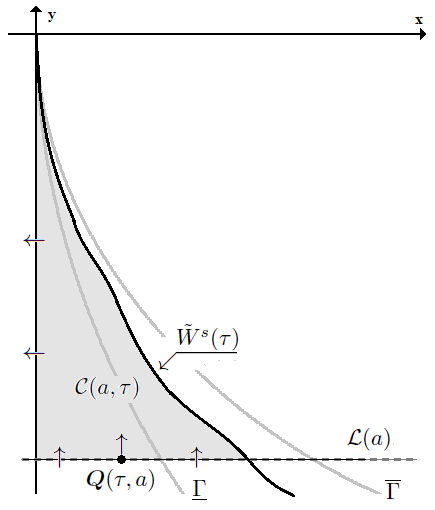, height = 6 cm}
}
\caption{The constructions needed in the proof of Lemma~\ref{l.new.technical} }
\label{fig.tecnico}
\end{figure}

Let us now fix $\tau \in \R$. From Remark~\ref{r.box} and the construction in \S\ref{basic},
 we see that $W^s(\tau)$
 intersects  $\breve{\mathcal{G}}$ (not necessarily transversely), see \cite[Lemma 2.8]{FrancaFE}.
  Follow $W^s(\tau)$ from the origin towards  $x>0$: denote by $\bs{Q^s}(\tau)$ the first intersection with $\breve{\mathcal{G}}$
  and by $\tilde{W}^s(\tau)$ the connected branch of $W^s(\tau)$ between the origin and $\bs{Q^s}(\tau)$.

Moreover, we introduce the set
  \begin{equation}\label{hatWs}
   \hat{W}^s(\tau):= \{ \QQ \in  W^s(\tau) \cap \mathcal B \mid \dot x(t; \tau, \QQ)<0 \; \textrm{ for any } t \ge \tau \} \supset \tilde W^s(\tau)\,.
  \end{equation}
In particular, using Remark~\ref{r.box}, we have
\begin{align}
    \label{What-inv}
    \QQ\in \hat{W}^s(\tau) & \Rightarrow \bs{\phi}(t;\tau,\QQ) \in \hat{W}^s(t) \mbox{ for every } t\geq \tau\,,
\\
    \QQ\in \tilde{W}^s(\tau) & \Rightarrow \bs{\phi}(t;\tau,\QQ) \in \tilde{W}^s(t) \mbox{ for every } t\geq \tau\,.
    \label{Wtilde-inv}
\end{align}

In what follows, we recall the argument in~\cite[pp. 357--360]{FrancaFE}.
Let us consider the autonomous systems~\eqref{sist-din}, where $K(t)$ is replaced by $\ov K$, respectively $\und K$, and we denote by
$$
\bs{\ov\phi}(t;\tau,\QQ) =(\ov{x}(t;\tau,\QQ),\ov{y}(t;\tau,\QQ))\,,
 \mbox{ resp. }  \bs{\und\phi}(t;\tau,\QQ) =(\und{x}(t;\tau,\QQ),\und{y}(t;\tau,\QQ))
$$
the trajectories of these systems starting at time $\tau$ from the point $\QQ$.
From \eqref{homosapiens}, recalling that in the region $\dot{x}<0$ the branch of
$\und{\Gamma}$ lies under the corresponding branch of $\ov{\Gamma}$,
see Figure \ref{fig.tecnico},
we can find $C>c>0$ such that
$$
    c\, \eu^{-\frac{n-p}{p(p-1)} t} \leq \und{x}(t;0,\bs{\und{G}}) \leq
    \ov{x}(t;0,\bs{\ov{G}}) \leq
    C\, \eu^{-\frac{n-p}{p(p-1)} t}\,, \mbox{ for any $t\geq 0$,}
$$
or, equivalently,
\begin{equation}\label{Cc-base}
    c\, \eu^{-\frac{n-p}{p(p-1)} (t-\tau)} \leq \und{x}(t;\tau,\bs{\und{G}}) \leq
    \ov{x}(t;\tau,\bs{\ov{G}}) \leq
    C\, \eu^{-\frac{n-p}{p(p-1)} (t-\tau)}\,, \mbox{ for any } t\geq \tau\,.
\end{equation}
Such estimates permit us to provide analogous ones for the solutions of the non-autonomous system~\eqref{sist-din}.

  \begin{lemma}\label{l.abovebelow}
    Assume  $\Hell$ and $\Hstab$. Let $\tau\in\R$ and $\bs{\hat Q}\in \hat W^s(\tau)\cap \mathcal{G}$, then
    \begin{equation}\label{Cc}
      c\, \eu^{-\frac{n-p}{p(p-1)}(t-\tau)} \le x(t; \tau, \bs{\hat Q}) \le C\, \eu^{-\frac{n-p}{p(p-1)}(t-\tau)}\,, \qquad \mbox{for any } t\ge \tau\,.
    \end{equation}
  \end{lemma}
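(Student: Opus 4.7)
The plan is to sandwich $\bs{\phi}(t;\tau,\bs{\hat Q})$ between the two frozen-system trajectories $\bs{\ov\phi}(t;\tau,\bs{\ov{G}})$ and $\bs{\und\phi}(t;\tau,\bs{\und{G}})$, which trace the arcs $\partial\mathcal{B}^{\uparrow}$ and $\partial\mathcal{B}^{\downarrow}$ respectively; since the $x$-components of these two autonomous trajectories are already pinched between $c\,\eu^{-\frac{n-p}{p(p-1)}(t-\tau)}$ and $C\,\eu^{-\frac{n-p}{p(p-1)}(t-\tau)}$ by~\eqref{Cc-base}, the estimate~\eqref{Cc} will follow. As preliminary, the invariance~\eqref{What-inv} and the definition of $\hat W^s(\tau)$ force $\bs{\phi}(t;\tau,\bs{\hat Q})\in\mathcal{B}$ with $\dot x(t)<0$ for all $t\geq\tau$ and $\bs{\phi}(t)\to(0,0)$. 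Differentiating along the trajectory one finds
$$
\frac{d}{dt}\ov\HH(\bs{\phi}(t))=(\ov K - K(t))\,x(t)^{q-1}\dot x(t)\leq 0, \qquad \frac{d}{dt}\und\HH(\bs{\phi}(t))=(\und K - K(t))\,x(t)^{q-1}\dot x(t)\geq 0,
$$
so $\ov\HH(\bs{\phi})$ is non-increasing and $\und\HH(\bs{\phi})$ is non-decreasing; combined with $\bs{\phi}(t)\to(0,0)$ and $\ov\HH(0,0)=\und\HH(0,0)=0$, these monotonicities impose $\ov\HH(\bs{\phi}(t))\geq 0\geq\und\HH(\bs{\phi}(t))$ for every $t\geq\tau$, so $\bs{\phi}(t)$ stays outside $\ov F$ and inside $\und F$. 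Moreover $\bs{\hat Q}\neq\bs{\ov{G}}$ (otherwise $\dot x(\tau)=0$, violating the strict inequality in the definition of $\hat W^s$), whence $\ov\HH(\bs{\hat Q})>0$ strictly.

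For the upper bound introduce $\varphi(t):=\ov x(t;\tau,\bs{\ov{G}})-x(t;\tau,\bs{\hat Q})$. Then $\varphi(\tau)=0$ and $\dot\varphi(\tau)=-\dot x(\tau)>0$, so $\varphi>0$ immediately to the right of $\tau$. If there were a first time $t^*>\tau$ with $\varphi(t^*)=0$, then $\dot\varphi(t^*)\leq 0$; using $\dot x=\alpha x - |y|^{1/(p-1)}$ together with $x(t^*)=\ov x(t^*)$ one deduces $|y(t^*)|\leq|\ov y(t^*)|$. On the other hand, if $\ov\HH(\bs{\phi}(t^*))>0$, then $\bs{\phi}(t^*)$ sits strictly outside $\ov F$ in the $\dot x<0$ region at the common $x$-coordinate $\ov x(t^*)$, which forces $y(t^*)<\ov y(t^*)$ strictly and hence $|y(t^*)|>|\ov y(t^*)|$: a contradiction. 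Therefore $\varphi\geq 0$ on $[\tau,+\infty)$ and \eqref{Cc-base} yields the upper bound in~\eqref{Cc}. The lower bound follows from the mirror argument applied to $\psi(t):=x(t;\tau,\bs{\hat Q})-\und x(t;\tau,\bs{\und{G}})$, using that $\und\HH(\bs{\phi})$ is non-decreasing and that $\bs{\phi}(t)$ stays inside $\und F$.

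The main obstacle is the degenerate subcase in which $\ov\HH(\bs{\phi}(t^{**}))=0$ at some finite $t^{**}>\tau$: the constraint $\ov\HH\geq 0$ together with monotonicity forces $\ov\HH(\bs{\phi}(t))\equiv 0$ and $K(t)\equiv\ov K$ on $[t^{**},+\infty)$, so on that tail $\bs{\phi}$ coincides with a time-shift of $\bs{\ov\phi}$, say $\bs{\phi}(t)=\bs{\ov\phi}(\hat t+t-t^{**};\tau,\bs{\ov{G}})$ for some $\hat t$. The inequality $x(t^{**})\leq\ov x(t^{**})$, already established on the strict-positivity segment $[\tau,t^{**})$ by the previous crossing argument, combined with the monotonicity of $\ov x$, gives $\hat t\geq t^{**}$ and therefore $x(t)\leq\ov x(t)$ for every $t\geq t^{**}$. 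The symmetric dichotomy on the $\und\HH$ side is treated identically, concluding the proof.
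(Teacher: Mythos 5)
Your proof is correct and follows essentially the same route as the paper: sandwich $x(t;\tau,\bs{\hat Q})$ between $\und{x}(t;\tau,\bs{\und{G}})$ and $\ov{x}(t;\tau,\bs{\ov{G}})$ via a comparison argument at the first crossing time, then apply~\eqref{Cc-base}; the paper obtains the needed $y$-ordering at the crossing directly from the invariance of $\mathcal{B}$ (Remark~\ref{r.box} and~\eqref{What-inv}), while you re-derive the same geometric fact from the monotonicity of $\ov\HH\circ\bs{\phi}$ and $\und\HH\circ\bs{\phi}$ together with the limit at the origin. One small remark: the ``degenerate subcase'' $\ov\HH(\bs{\phi}(t^{**}))=0$ you discuss at the end is vacuous, since $\Hstab$ gives the strict inequality $K(t)<\ov{K}$, and with $x>0$, $\dot x<0$ this makes $\ov\HH\circ\bs{\phi}$ strictly decreasing and hence strictly positive at every finite $t$; that last paragraph is harmless but unnecessary.
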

\begin{proof}
We will prove that
$$
\und{x}(t;\tau,\bs{\und{G}}) \leq
x(t; \tau, \bs{\hat Q})
\leq \ov{x}(t;\tau,\bs{\ov{G}})\,, \mbox{ for any } t\geq \tau\,.
$$
Then, the conclusion will follow from~\eqref{Cc-base}.

We give the proof of the first inequality, the other being similar. Let $T=\sup\{t\geq \tau \,:\, \und{x}(s;\tau,\bs{\und{G}}) \leq
x(s; \tau, \bs{\hat Q}) \mbox{ for any } s\in[\tau,t]\,\}$. Assume by contradiction that $T\in\R$. Then, $\und{x}(T;\tau,\bs{\und{G}}) = x(T; \tau, \bs{\hat Q})$ and since $\bs{\hat Q}\in \hat W^s(\tau)$, from~\eqref{What-inv} and Remark \ref{r.box}
 the trajectory remains in the interior
  of $\mathcal B$ and in particular we get $\und{y}(T;\tau,\bs{\und{G}}) < y(T; \tau, \bs{\hat Q})$.
So, from \eqref{sist-din} and \eqref{sist-frozen}, we deduce that
$\dot{\und{x}}(T;\tau,\bs{\und{G}}) < \dot x(T; \tau, \bs{\hat Q})$, providing $\und{x}(t;\tau,\bs{\und{G}}) < x(t; \tau, \bs{\hat Q})$ in a right neighborhood of $T$, leading to a contradiction.
\end{proof}

From Lemma~\ref{l.abovebelow}, we obtain the following result, which has already been proved   in~\cite[Lemma 3.1]{FrancaFE} focusing the attention on the point $\bs{Q^s}(\tau)\in \tilde W^s(\tau)$, but we repeat the argument here to correct some typos.

  \begin{lemma}\label{l.segnoHs}
  Assume $\Hstab$ and $\Hell$ with $\ell  \le \frac{n}{p-1}$.
 Then, there is $N_1>0$ such that
  $\HH(\QQ;\tau)<0$ for any  $\QQ \in \hat{W}^s(\tau)$ and $\tau <-N_1$.
\end{lemma}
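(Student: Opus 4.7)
The plan rests on the integrated energy identity
\[
\HH(\QQ;\tau)=-\frac{1}{q}\int_\tau^{+\infty}\dot K(s)\,x(s;\tau,\QQ)^q\,ds,
\]
obtained by integrating \eqref{Hder} along the forward orbit and using that $\QQ\in W^s(\tau)$ forces $\bs\phi(t;\tau,\QQ)\to\bs{0}$, so that $\HH(\bs\phi(t);t)\to 0$ (the transfer of the convergence from the trajectory to $\HH$ uses the boundedness of $K$ from $\Hstab$). The lemma thus reduces to showing that the integral on the right is strictly positive for every $\QQ\in\hat W^s(\tau)\setminus\{\bs{0}\}$ once $-\tau$ is large enough.

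I would split the integral at a fixed $T_0\le\hat T_0$ as in Remark \ref{r.k}, so that $\dot K(s)\ge\tfrac{B\ell}{2}e^{\ell s}$ on ${}]-\infty,T_0]$ by \eqref{tzero}. For the distinguished point $\bs{Q^s}(\tau)\in\hat W^s(\tau)\cap\mathcal G$, Lemma \ref{l.abovebelow} gives the two-sided bound $c\,e^{-\alpha(s-\tau)/(p-1)}\le x(s;\tau,\bs{Q^s}(\tau))\le C\,e^{-\alpha(s-\tau)/(p-1)}$; using $q\alpha=n$ via \eqref{lpstar}, the main piece satisfies
\[
\int_\tau^{T_0}\dot K\,x^q\,ds\;\ge\;\tfrac{B\ell c^q}{2}\,e^{n\tau/(p-1)}\int_\tau^{T_0}e^{(\ell-n/(p-1))s}\,ds,
\]
which, because $\ell\le n/(p-1)$, is bounded below by a positive multiple of $e^{\ell\tau}$ (with an extra $|\tau|$ factor in the critical case $\ell=n/(p-1)$). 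For the tail, $x(s)^q\le C^q e^{n\tau/(p-1)}e^{-ns/(p-1)}$, and an integration by parts exploiting the boundedness of $K$ from $\Hstab$ yields $\bigl|\int_{T_0}^{+\infty}\dot K\,x^q\,ds\bigr|\le \mathrm{const}\cdot e^{n\tau/(p-1)}$. Since $\ell\le n/(p-1)$, the main piece dominates the tail as $\tau\to-\infty$, so $\HH(\bs{Q^s}(\tau);\tau)<0$ for every $\tau<-N_1$.

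To extend the estimate to an arbitrary $\QQ\in \hat W^s(\tau)\setminus\{\bs{0}\}$, I would combine two complementary regimes. Near the origin, the local parametrization of $W^s(\tau)$ from Theorem \ref{local.mani1} (tangent to the $y$-axis for $p<2$, to $y=-(n-2)x$ for $p=2$), together with the first nontrivial correction obtained by substituting into \eqref{sist-din}, yields an expansion of the form $\HH(\QQ;\tau)=-c_1\,K(\tau)\,Q_x^q+o(Q_x^q)$ with $c_1>0$, so that $\HH(\QQ;\tau)<0$ in a fixed neighbourhood of the origin uniformly in $\tau<-N_1$. For $\QQ$ on the compact complementary portion of $\hat W^s(\tau)$, the comparison argument in the proof of Lemma \ref{l.abovebelow} carries over if $\bs{\ov G},\bs{\und G}$ are replaced by the points on $\ov\Gamma,\und\Gamma$ sharing the $x$-coordinate of $\QQ$; this produces uniform exponential bounds on $x(s;\tau,\QQ)$, and the same energy-balance estimate then delivers $\HH(\QQ;\tau)<0$. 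The main obstacle is making the comparison uniform as $\QQ$ varies along the whole curve $\hat W^s(\tau)$ and gluing it with the near-origin local expansion so that the threshold $N_1$ does not depend on $\QQ$; the critical case $\ell=n/(p-1)$ is the tightest, since the main piece beats the tail only by a logarithmic (linear in $\tau$) factor.
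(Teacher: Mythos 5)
Your computation of $\HH(\bs{Q^s}(\tau);\tau)$ for the single distinguished point $\bs{Q^s}(\tau)\in\hat W^s(\tau)\cap\mathcal G$ is essentially the paper's, but you stall exactly where the paper does not: the extension to an arbitrary $\QQ\in\hat W^s(\tau)$. Your proposed two-regime scheme (a near-origin tangent-line expansion of $\HH$, plus a modified comparison on the compact complement) is a genuinely different and considerably heavier route, and you correctly flag that making the comparison uniform in $\QQ$ is the crux; as written, that uniformity is missing, and the near-origin expansion $\HH(\QQ;\tau)=-c_1 K(\tau) Q_x^q + o(Q_x^q)$ is not obviously uniform in $\tau$ either. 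So the proposal has a real gap.

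The idea you missed is the backward-tracking through $\mathcal G$. Since $\hat W^s(\tau)\subset\mathcal B$, the backward orbit of any $\QQ\in\hat W^s(\tau)$ must exit $\mathcal B$, and Remark~\ref{r.box} forces the exit through $\breve{\mathcal G}$: there is a largest $T_{\mathcal G}\le\tau$ with $\bs{\phi}(T_{\mathcal G};\tau,\QQ)\in\mathcal G$, hence $\bs{\phi}(T_{\mathcal G};\tau,\QQ)\in\hat W^s(T_{\mathcal G})\cap\mathcal G$, and $0<x(t)<\ov G_x$, $\dot x(t)<0$ for $t>T_{\mathcal G}$. Applying Lemma~\ref{l.abovebelow} at time $T_{\mathcal G}$ instead of at $\tau$ gives
\begin{equation}
  c\,\eu^{-\frac{n-p}{p(p-1)}(t-T_{\mathcal G})}\le x(t)\le C\,\eu^{-\frac{n-p}{p(p-1)}(t-T_{\mathcal G})},\qquad t\ge T_{\mathcal G},
\end{equation}
for \emph{every} $\QQ\in\hat W^s(\tau)$, not only the one on $\mathcal G$. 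Running your estimates with this reference time, both the tail contribution (via the same integration by parts) and the main contribution on $[\tau,\hat T_0]$ carry the common prefactor $\eu^{\frac{n}{p-1}T_{\mathcal G}}$, which you can factor out: after this, $T_{\mathcal G}$ (and hence $\QQ$) has completely disappeared, and the only $\tau$-dependence left is the divergent factor $\int_{\tau}^{\hat T_0}\eu^{-(\frac{n}{p-1}-\ell)t}\,dt$ as $\tau\to-\infty$. That is why the threshold $N_1$ can be chosen uniformly in $\QQ$ with no case distinction near the origin; no local expansion of $W^s$ is needed. I would replace your entire third paragraph with this observation.
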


\begin{proof}
Let $\hat{T}_0$ be as in Remark \ref{r.k}, so that \eqref{tzero} holds.

Take any $\tau \leq \hat {T}_0$ and consider a point $\QQ \in \hat{W}^s(\tau)$.
We denote, for brevity, the trajectory $\bs{\phi}(t; \tau,\QQ)$ as $\bs{\phi}(t)=(x(t),y(t))$.

We note that $\bs{\phi}$ must cross the line $\mathcal G$
  at a time smaller than $\tau$.
 So, let us denote by $T_{\mathcal G} \le \tau$ the largest value with such a property. In particular, by construction, $x(T_{\mathcal G})=\ov G_x$,
 $0<x(t)<\ov G_x$ and $\dot x(t)<0$ for every $t > T_{\mathcal G}$.
Using Lemma~\ref{l.abovebelow}, we have
\begin{equation}\label{Cc-mod}
c\, \eu^{-\frac{n-p}{p(p-1)}(t-T_{\mathcal G})} \leq x(t) \leq C\, \eu^{-\frac{n-p}{p(p-1)}(t-T_{\mathcal G})}\,, \mbox{ for any } t \geq T_{\mathcal G}\,.
\end{equation}
Since $T_{\mathcal G} \le \tau \le \hat {T}_0$ we see that
 $x$ is decreasing in the interval $]\hat{T}_0,+\infty[{}\subset ]T_{\mathcal G},+\infty[{}$; so using~\eqref{Cc-mod},
\begin{align*}
 \int_{\hat{T}_0}^{+\infty} \dot K(t) \frac{x(t)^q}{q}\, dt
    &= -K(\hat{T}_0) \frac{x(\hat{T}_0)^q}{q} +
    \int_{\hat{T}_0}^{+\infty} K(t) \frac{d}{dt} \left[-\frac{x(t)^q}{q}\right]\, dt\,\geq\\
    &\geq-K(\hat{T}_0) \frac{x(\hat{T}_0)^q}{q}\, \geq\, -\frac{C^q}{q}\, K(\hat{T}_0) \, \eu^{-\frac{q(n-p)}{p(p-1)}(\hat{T}_0-T_{\mathcal G})}=\\
    &= -\frac{C^q}{q}\, K(\hat{T}_0) \, \eu^{-\frac{n}{p-1}(\hat{T}_0-T_{\mathcal G})}\,.
\end{align*}

Moreover, using~\eqref{tzero}
and~\eqref{Cc-mod},
\begin{align*}
\int_{\tau}^{\hat{T}_0} \dot K(t) \frac{x(t)^q}{q}\, dt    &\geq \frac{B\ell c^q}{2q} \int_{\tau}^{\hat{T}_0} \eu^{\ell t} \, \eu^{-\frac{q(n-p)}{p(p-1)}(t-T_{\mathcal G})}\, dt=\\
&= \frac{B\ell c^q}{2q} \eu^{\frac{n}{p-1} T_{\mathcal G}} \int_{\tau}^{\hat{T}_0} \eu^{-\left(  \frac{n}{p-1}-\ell \right)\, t}\, dt\,.
\end{align*}
Summing up, setting $b_1=\frac{C^q}{q}\,K(\hat{T}_0)
\, \eu^{-\frac{n}{p-1}\hat{T}_0}$
 and $b_2=\frac{B\ell c^q}{2q}$, from \eqref{Hder}, we get
\begin{align*}
    \HH(\QQ;\tau) &= -\int_{\tau}^{+\infty} \dot K(t) \frac{|x(t)|^q}{q}\, dt\,=\\
    &= -\int_{\hat{T}_0}^{+\infty} \dot K(t) \frac{x(t)^q}{q}\, dt
     -\int_{\tau}^{\hat{T}_0} \dot K(t) \frac{x(t)^q}{q}\, dt\,\leq\\
    &\leq
    \eu^{\frac{n}{p-1}T_{\mathcal G}} \left[ b_1 - b_2
    \int_{\tau}^{\hat{T}_0} \eu^{-\left(  \frac{n}{p-1}-\ell \right)\, t}\, dt
    \right]\,.
\end{align*}
So, since $\ell \leq \frac{n}{p-1}$, the integral diverges as $\tau\to-\infty$, thus giving the proof.
\end{proof}

Let us assume now $\Hell$ and $\Hstab$. Recalling the definition of $\EE$  given in \eqref{EE}, fix
\begin{equation}\label{fix-a}
0<a<
\left(\frac{\alpha^q}{\ov{K}}\right)^{\frac{p-1}{q-p}}
\leq
\inf\{|E_y(t)| \,:\, t\in\R\}
\end{equation}
and consider the segment $\linea(a)$ defined as in~\eqref{defLa}.

From Remark~\ref{r.La}, there is $N(a)>0$ such that $W^u(\tau)$ intersects transversely $\linea(a)$ in a point
 denoted by $\QQ(\tau,a)$, for every $\tau<-N(a)$.  Moreover,
since $a>\ov{G}_y$,  a subsegment of $\linea(a)$ joins  $\partial \mathcal{B}^{\downarrow}$ with $\partial \mathcal{B}^{\uparrow}$ (transversely), and, consequently,
from the previous argument,
$\tilde{W}^s(\tau)$ intersects $\linea(a)$, too, for every $\tau\in\R$.

\begin{lemma}\label{l.tecnico1}
  Assume $\Hstab$ and $\Hell$ with $\ell  \le \frac{n}{p-1}$.
 Then, 
  there is $\hat{N}(a)>0$ such that for any $\tau<- \hat{N}(a)$ the trajectory
  $\bs{\phi}(t; \tau, \QQ(\tau,a))$ corresponds to a crossing solution, i.e. there is $T=T(\tau,a)>\tau$ such that
  $x(t; \tau, \QQ(\tau,a))>0$ for any $t<T$ and it becomes null at $t=T$. Further
  $\dot{x}(t; \tau, \QQ(\tau,a))<0$ for any $\tau \le t \le T(\tau,a)$.
\end{lemma}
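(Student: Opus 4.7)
My plan is to argue that, for $\tau$ sufficiently negative, the forward trajectory $\bs{\phi}(t;\tau,\QQ(\tau,a))$ starts in the interior of the trapping region $\mathcal{B}$, cannot remain there for all positive time, and therefore exits $\mathcal{B}$ in finite time through the boundary piece $\partial\mathcal{B}^{\downarrow}$; subsequently the trajectory reaches the negative $y$-semi-axis while keeping $\dot x<0$, thereby defining the crossing time $T(\tau,a)$. The engine of the proof is the comparison between the strictly positive energy $\HH(\QQ(\tau,a);\tau)$ provided by Remark~\ref{r.La} and the sign information on $\hat W^s(\tau)$ given by Lemma~\ref{l.segnoHs}.

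To begin, Remark~\ref{r.zerotoo} together with Lemma~\ref{l.vicine} shows that $\QQ(\tau,a)\to \QQ_{\linea(a)}(-\infty)\in \bs{\Gamma_{-\infty}}\cap\linea(a)$ as $\tau\to-\infty$. Since $\und{K}<K(-\infty)<\ov{K}$, one has $\und\HH<0<\ov\HH$ on $\bs{\Gamma_{-\infty}}$, so $\QQ_{\linea(a)}(-\infty)$ lies in the interior of $\mathcal{B}$; continuity then gives $\QQ(\tau,a)\in\mathcal{B}$ for $\tau<-\hat{N}(a)$. Remark~\ref{r.La} yields $\HH(\QQ(\tau,a);\tau)=c(a)\,\eu^{\ell\tau}+o(\eu^{\ell\tau})>0$, whereas Lemma~\ref{l.segnoHs} guarantees $\HH(\cdot;\tau)<0$ on $\hat W^s(\tau)$ for $\tau<-N_1$. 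Choosing $\hat{N}(a)\ge N_1$, we conclude $\QQ(\tau,a)\in\mathcal{B}\setminus\hat W^s(\tau)$.

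Next I would establish the characterization that a point $\QQ\in\mathcal{B}$ lies in $\hat W^s(\tau)$ if and only if its forward trajectory stays in $\mathcal{B}$ for every $t\ge\tau$. Since the interior of $\mathcal{B}$ is contained in $\{\dot x<0\}$, a trapped trajectory has $x(t)$ strictly decreasing, so $x(t)\to x_\infty\in[0,\ov G_x]$; compactness of $\mathcal{B}$, a Barbalat-type argument (using that $\dot x=\alpha x+y|y|^{(2-p)/(p-1)}$ does not depend on $t$), and the geometry of $\mathcal{B}$ near its rightmost point $\bs{\ov{G}}$ force $\bs{\phi}(t)\to(0,0)$, so $\QQ\in W^s(\tau)$ by~\eqref{wuu}, hence $\QQ\in\hat W^s(\tau)$. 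Combining with the previous step, the trajectory of $\QQ(\tau,a)$ must therefore exit $\mathcal{B}$ at some finite time $t_{\mathrm{exit}}(\tau,a)>\tau$, and by Remark~\ref{r.box} this exit occurs through either $\partial\mathcal{B}^{\uparrow}$ or $\partial\mathcal{B}^{\downarrow}$.

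The exit direction would be identified through the monotonicity identities
\begin{equation*}
\tfrac{d}{dt}\ov\HH(\bs{\phi}(t))=(\ov{K}-K(t))\,x^{q-1}\dot x\le 0\,,\qquad \tfrac{d}{dt}\und\HH(\bs{\phi}(t))=(\und{K}-K(t))\,x^{q-1}\dot x\ge 0\,,
\end{equation*}
valid as long as $\dot x\le 0$, together with a quantitative comparison of $\bs{\phi}$ with the frozen orbit $\bs{\phi_\tau}$ in the spirit of the proof of Lemma~\ref{l.Tuno} (adapted through the truncation $\hat K$ of Remark~\ref{r.k}). Starting from $\ov\HH(\QQ(\tau,a))>0$, the expected output is that $\ov\HH$ remains strictly positive on $[\tau,t_{\mathrm{exit}}]$, ruling out exit through $\partial\mathcal{B}^{\uparrow}\subset\{\ov\HH=0\}$ and forcing the exit to occur through $\partial\mathcal{B}^{\downarrow}\subset\{\und\HH=0\}$. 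After exit, the trajectory lies in $\{x>0,\,y<0,\,\und\HH>0,\,\dot x<0\}$, and a Gr\"onwall estimate in the spirit of Remark~\ref{r.daprima}, controlling $|y(t)|$ from below and $x(t)$ from above, keeps it uniformly below the isocline $\dot x=0$ until it meets the negative $y$-semi-axis in finite time. The main obstacle I anticipate is exactly this comparison: in the absence of the global monotonicity $\Hinc$, excluding a premature exit through $\partial\mathcal{B}^{\uparrow}$ and preserving $\dot x<0$ afterwards requires a sufficiently sharp quantitative control of the deviation of $\bs{\phi}$ from $\bs{\phi_\tau}$ (or from $\bs{\phi_{-\infty}}$) on intervals whose length may grow with $|\tau|$.
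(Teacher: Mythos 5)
Your proposal follows a genuinely different route from the paper, and as written it contains a gap at the point you yourself flag.

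The paper does not work with the fixed box $\mathcal{B}$ as the trapping region at all. It introduces the \emph{time-dependent} region $\mathcal{C}(a,\tau)$ bounded by the stable leaf $\tilde W^s(\tau)$, the segment $\linea(a)$ and the negative $y$-semi-axis, shows $\QQ(\tau,a)\in\mathcal{C}(a,\tau)$ via Remark~\ref{r.La} and Lemma~\ref{l.segnoHs}, and observes that $\dot x<0<\dot y$ forbids exit through $\linea(a)$. The only remaining exit routes are the $y$-axis (which is the thesis) or a hit of $\tilde W^s(T)$; in the latter case invariance of the stable manifold forces $\QQ(\tau,a)\in W^s(\tau)$, hence (since $\dot x<0$ throughout) $\QQ(\tau,a)\in\hat W^s(\tau)$, contradicting Lemma~\ref{l.segnoHs}. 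The contradiction is thus purely qualitative and uses the invariance of $W^s$; the question of \emph{which} piece of $\partial\mathcal{B}$ the trajectory leaves through never arises.

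Your version relies on first establishing that the trajectory must exit $\mathcal{B}$, and then ruling out exit through $\partial\mathcal{B}^{\uparrow}$ by keeping $\ov\HH>0$. This is where the argument breaks. Along any trajectory with $\dot x\le 0$ one has $\frac{d}{dt}\ov\HH=(\ov K-K(t))x^{q-1}\dot x\le 0$, so $\ov\HH$ \emph{decreases}; hitting $\partial\mathcal{B}^{\uparrow}=\{\ov\HH=0\}$ is therefore not ruled out by monotonicity, only by a quantitative comparison. The crude bound does not close this: the total possible drop of $\ov\HH$ along $[\tau,t_{\mathrm{exit}}]$ is of order $(\ov K-\und K)\,Q_x(\tau,a)^q/q$, while, since $\HH(\QQ(\tau,a);\tau)\to 0$, the initial value is $\ov\HH(\QQ(\tau,a))=(\ov K-K(\tau))Q_x(\tau,a)^q/q+o(1)$. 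These two quantities are of the same size, so positivity of $\ov\HH$ at the exit time is not guaranteed without a sharper estimate of the energy increment of the type in Lemma~\ref{l.H1leq2H0} — precisely the "main obstacle" you name and leave unresolved. There is a further, secondary gap: the identification "forward-trapped in $\mathcal{B}$ $\Leftrightarrow$ $\QQ\in\hat W^s(\tau)$" requires ruling out accumulation at the boundary point $\bs{\ov G}$ in the Barbalat step, which is not addressed. The paper's choice of $\mathcal{C}(a,\tau)$ with the moving leaf $\tilde W^s(\tau)$ as the relevant barrier is exactly what removes both difficulties; I would recommend adopting that formulation rather than trying to patch the $\mathcal{B}$-exit argument.
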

\begin{proof}

  Let us consider the compact set $\mathcal{C}=\mathcal{C}(a,\tau)$, see Figure~\ref{fig.tecnico},  delimited by $\tilde{W}^s(\tau)$,
 the $y$ negative semi-axis and the line $\linea(a)$, for any $\tau \in\R$.
  Then, from Remark~\ref{r.La}, we can find $\hat N(a)> N_1$, with $N_1$ provided by Lemma~\ref{l.segnoHs}, such that
  \begin{equation}\label{percontr}
  \HH(\QQ(\tau,a);\tau)>\frac{c(a)}{2} \eu^{\ell \tau} >0\, \qquad \mbox{for every } \tau<-\hat N(a).
  \end{equation}
Hence, from Lemma~\ref{l.segnoHs}, we get $\QQ(\tau,a) \in \mathcal{C}(a,\tau)$ for any $\tau<-\hat{N}(a)$.

Fix $\tau <-\hat{N}(a)$; then by construction $\bs{\phi}(t; \tau, \QQ(\tau,a)) \in \mathcal{C}(a,t)$, when
$t$ is in a sufficiently small right neighborhood of $\tau$, see Remark \ref{r.cuno}.

So, there is $T(\tau,a)>\tau$ such that $\bs{\phi}(t; \tau, \QQ(\tau,a)) \in \mathcal{C}(a,t)$ for any $\tau \le t \le T(\tau,a)$
and it leaves $\mathcal{C}(a,t)$ when $t>T(\tau,a)$, or  $\bs{\phi}(t; \tau, \QQ(\tau,a)) \in \mathcal{C}(a,t)$ for any $t \ge \tau$
(i.e. $T(\tau,a)=+\infty$).

From an analysis of the phase portrait we see that
\begin{equation}\label{ddd}
\dot{x}(t; \tau, \QQ(\tau,a))<0<\dot{y}(t; \tau, \QQ(\tau,a)) \, \quad \textrm{when $\tau \le t \le T(\tau,a)$}\,.
\end{equation}
Assume first that $T(\tau,a) \in \R$, then either $\bs{\phi}(t; \tau, \QQ(\tau,a))$ crosses the $y$ negative semi-axis at $t=T(\tau,a)$
proving the thesis, or $\bs{R}:=\bs{\phi}(T(\tau,a); \tau, \QQ(\tau,a)) \in \tilde{W}^s(T(\tau,a))$.

So, assume the latter and observe that
$\bs{R}\in \hat{W}^s(T(\tau,a))$, thus   $\QQ(\tau,a) \in {W}^s(\tau)$.
Then, $\dot{x}(t; T(\tau,a),\bs{R}) <0$ for any $t> T(\tau,a)$, and by construction
$\dot{x}(t; \tau, \QQ(\tau,a))<0$ for any $\tau \le t \le T(\tau,a)$. So, since the trajectories  $\bs{\phi}(\,\cdot\,; \tau, \QQ(\tau,a))$ and $\bs{\phi}(\,\cdot\,; T(\tau,a),\bs{R})$ coincide, we conclude that
$\dot{x}(t; \tau, \QQ(\tau,a))<0$ for any $t \ge \tau$, thus giving us $\QQ(\tau,a) \in \hat{W}^s(\tau)$.

Hence, we get a contradiction comparing~\eqref{percontr} with Lemma~\ref{l.segnoHs}.

\smallbreak

We consider now the remaining case: $T(\tau,a) =+\infty$. Recalling~\eqref{ddd}, the only reasonable conclusion is that $\bs{\phi}(t; \tau, \QQ(\tau,a)) \to (0,0)$ as $t \to +\infty$, i.e. $\QQ(\tau,a) \in {W}^s(\tau)$.
Whence, from~\eqref{ddd} we find
 $\QQ(\tau,a) \in \hat{W}^s(\tau)$, and get again the contradiction comparing~\eqref{percontr} with Lemma~\ref{l.segnoHs}.

So, the lemma is proved.
\end{proof}

Taking advantage of this result, we can extend it to a wider class of functions.

\begin{lemma}\label{l.tecnico2}
    Assume $\Hell$ with $\ell \le \frac{n}{p-1}$, and that
there is $\rho_1>0$ such that $\K(r) \ge \K(\rho_1)$ when $r \ge \rho_1$.
 Then, the same conclusion as in Lemma~\ref{l.tecnico1} holds.
\end{lemma}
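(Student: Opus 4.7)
The strategy is to reduce to Lemma~\ref{l.tecnico1} by truncating $K$ at infinity, and then to handle the tail by a direct energy argument. Let $t_1 := \ln \rho_1$ and set
\[
\tilde K(t) := \begin{cases} K(t) & \text{if } t \le t_1,\\ K(t_1) & \text{if } t \ge t_1, \end{cases}
\]
mollified near $t_1$ if needed so that $\tilde K \in C^1(\R)$, without altering its values on $(-\infty, t_1]$. Since $\tilde K$ agrees with $K$ in a neighborhood of $-\infty$, it still satisfies $\Hell$ with the same parameters; being bounded between $A = K(-\infty) > 0$ and $\sup_{s \le t_1} K(s) < +\infty$, and uniformly continuous, it satisfies $\Hstab$ as well. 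Lemma~\ref{l.tecnico1} applied to the modified system provides $\hat N(a) > 0$ such that for every $\tau < -\hat N(a)$ the modified trajectory $\tilde{\bs\phi}(\cdot;\tau,\tilde\QQ(\tau,a))$ is a crossing solution with crossing time $\tilde T(\tau,a)$ and $\dot x < 0$ on $[\tau,\tilde T(\tau,a)]$.

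For $\tau \le t_1$ with $\tau < -\hat N(a)$, the identity $K \equiv \tilde K$ on $(-\infty,t_1]$ forces $\QQ(\tau,a) = \tilde\QQ(\tau,a)$ and makes the two trajectories agree on $[\tau,t_1]$. If $\tilde T(\tau,a) \le t_1$, the original trajectory is already a crossing solution with the required sign of $\dot x$. Otherwise, set $\bs P := \bs\phi(t_1) = \tilde{\bs\phi}(t_1)$: on $[t_1,+\infty)$ the modified system coincides with the autonomous frozen system at $t_1$, and the frozen orbit from $\bs P$ reaches the negative $y$-semi-axis at $\tilde T(\tau,a)$; hence it lies on a level set of $\HH(\cdot;t_1)$ of strictly positive energy, so $c_0 := \HH(\bs P;t_1) > 0$.

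It remains to prove that the original trajectory $\bs\phi(\cdot;t_1,\bs P)$ reaches the $y$-axis in finite time with $\dot x < 0$ throughout. From \eqref{ripristi} and $K(t) \ge K(t_1)$ for $t \ge t_1$,
\[
\frac{d}{dt}\HH(\bs\phi(t);t_1) = \bigl(K(t_1)-K(t)\bigr)\, x(t)^{q-1}\, \dot x(t) \ge 0 \quad\text{whenever } \dot x(t) \le 0,
\]
so as long as $\dot x \le 0$ one has $\HH(\bs\phi(t);t_1) \ge c_0 > 0$. I would first rule out the existence of a first zero $t^* > t_1$ of $\dot x$: on the isocline one has $y(t^*) = -(\alpha x(t^*))^{p-1}$, and the positive energy bound, together with $K(t^*) \ge K(t_1)$ and $q > p$, forces $x(t^*) > E_x(t^*)$, whence $\dot y(t^*) = x(t^*)^{p-1}(\alpha^p - K(t^*) x(t^*)^{q-p}) < 0$ and $\ddot x(t^*) = (p-1)^{-1}|y(t^*)|^{(2-p)/(p-1)}\,\dot y(t^*) < 0$; this contradicts $\ddot x(t^*) \ge 0$, which follows by continuity of $\ddot x$ from $\dot x < 0$ on $(t_1,t^*)$ with $\dot x(t^*) = 0$. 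Hence $\dot x < 0$ on $[t_1,+\infty)$ as long as $x > 0$. The positive energy bound, combined with $-\alpha xy > 0$ in the fourth quadrant, then gives $|y(t)| \ge y_0 > 0$ whenever $x(t)$ is sufficiently small, so that $\dot x(t) = \alpha x - |y|^{1/(p-1)} \le -\tfrac{1}{2} y_0^{1/(p-1)}$ in that regime; this forces $x$ to reach $0$ in finite time. The alternative $x(t) \to x_\infty > 0$ is excluded: the positive limiting energy would force $x_\infty > E_x(t_1)$, hence (using $K(t) \ge K(t_1)$) $\dot y$ bounded above by a strictly negative constant near the limit, incompatible with $y$ having a finite limit.

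The main obstacle is ruling out the vanishing of $\dot x$ on $[t_1,+\infty)$ with only the one-sided bound $K(t) \ge K(t_1)$, rather than the global strict monotonicity $\dot K > 0$ exploited in Lemma~\ref{l.Tuno}. The isocline analysis at the putative first zero $t^*$, supplemented by the strict gap $q > p$ coming from criticality, is exactly what delivers the contradictory sign of $\ddot x(t^*)$; once this is in place, the finite-time crossing is a routine energy-based argument on $[t_1,+\infty)$.
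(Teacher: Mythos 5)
Your proof follows the same overall strategy as the paper: truncate $K$ to a constant past $t_1 = \ln\rho_1$, invoke Lemma~\ref{l.tecnico1} for the truncated system, and then use the monotonicity of the frozen energy $\HH(\cdot\,;t_1)$ along the original trajectory (from \eqref{ripristi} and $K(t)\ge K(t_1)$, $\dot x<0$) to carry the positive energy level $c_0=\HH(\bs{P};t_1)>0$ forward and conclude. Where you diverge is the endgame. The paper observes that the image $\mathcal{F}$ of the modified trajectory on $[\tau_1,T_m(\tau,a)]$ is a compact decreasing graph $\{y=f_m(x),\ 0\le x\le S_x\}$, that $\HH(\cdot\,;\tau_1)\ge c_0$ together with $x\le S_x$ traps the original trajectory in the region $\Lambda=\{0\le x\le S_x,\ y\le f_m(x)\}$, and that $\dot x=\alpha x-|y|^{1/(p-1)}$ — which does not depend on $K$ — attains its maximum over $\Lambda$ on $\mathcal{F}$, where it is strictly negative by compactness; this yields $\dot x\le -C<0$ and hence crossing in time $\le S_x/C$, with no case distinction needed. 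You instead rule out $\dot x=0$ by a second-derivative argument on the isocline (correct, though redundant: the inequality $\HH\ge c_0$ together with $x\le S_x$ already keeps the trajectory strictly inside $\dot x<0$, since the level set $\{\HH(\cdot\,;t_1)=c_0\}$ meets the isocline only at some $x>S_x$), and then argue finite-time crossing by a $|y|$ lower bound for small $x$ and by excluding $x\to x_\infty>0$. The last step has a genuine, though small, gap: you write ``incompatible with $y$ having a finite limit,'' but you have not shown that $y(t)$ converges when $x(t)\to x_\infty>0$. This is fixable — $y$ is bounded because $|y|\to\infty$ would force $\dot x\to-\infty$ and hence $x\to-\infty$, and any accumulation value of $y$ must lie on the isocline because $\int_{t_1}^{\infty}|\dot x|\,dt<\infty$ — but as written the exclusion of the alternative is incomplete. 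The compactness argument via $\mathcal{F}$ in the paper sidesteps these difficulties entirely, so you may want to adopt it; alternatively, note that the accumulation point $(x_\infty,-(\alpha x_\infty)^{p-1})$ carries energy $\ge c_0$, so $x_\infty$ exceeds the isocline abscissa of the level set $\{\HH=c_0\}$, which is $>S_x\ge x_\infty$, a contradiction. One further minor point: mollifying $\tilde K$ near $t_1$ ``without altering its values on $(-\infty,t_1]$'' cannot be done while keeping $\tilde K$ $C^1$ unless $\dot K(t_1)=0$; the paper uses the unmollified Lipschitz truncation and explicitly accepts that $W^s(\tau)$ is then only continuous in $\tau$ (not $C^1$), which suffices for the argument.
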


\begin{proof}
Recalling that $K(t):=\K(\eu^t)$, by assumption  there is $\tau_1\in\mathbb{R}$
  such that $K(t) \ge K(\tau_1)$ when $t \ge \tau_1$.
Let us define
$$
K_m(t):=
\begin{cases}
K(t) & \textrm{if }  t \le \tau_1\,, \\
K(\tau_1)  & \textrm{if }    t \ge \tau_1\,.
\end{cases}
$$
 Let
$$
\bs{\phi_m}(t;\tau,\QQ(\tau,a))=\big(x_m(t;\tau,\QQ(\tau,a)),y_m(t;\tau,\QQ(\tau,a))\big)$$
be the trajectory
of the modified system~\eqref{sist-din} where $K(t)$ is replaced by $K_m(t)$.
Observe that $K_m(t)$ satisfies $\Hstab$ and we can still apply Remark \ref{r.cuno}, but $W^s(\tau)$ depends continuously and not smoothly on $\tau$.

Since the original system and the modified one coincide for $t\leq \tau_1$, then their unstable manifolds are the same in this interval.

Concerning the modified system, let
$\ov{K}_m> \sup_{t \in \R}K_m(t)= \sup_{t \le \tau_1} K(t)$ and select the point $\bs{\ov{G}_m}=(\ov{G}_{x,m},\ov{G}_{y,m})$ as above.
Then, from Remark~\ref{r.La} we see that for any $0<a< \left( \frac{\alpha^q}{\ov{K}_m} \right)^{\frac{p-1}{q-p}}$, cf.~\eqref{fix-a},  there is $N={N}(a)>|\tau_1|>0$, such that $W^u(\tau)$ crosses transversely $\linea(a)$ in $\QQ(\tau,a)$ for any $\tau<-{N}$.

From Lemma~\ref{l.tecnico1}
 applied to the modified system,
 we can find $\hat{N}(a)\geq N(a)$ such that for any $\tau<-\hat{N}(a)$ there are $T_m(\tau,a)\in\R$ and
 $\bs{R_m}=(0,R_m)$ such that
 $\bs{\phi_m}(T_m(\tau,a);\tau,\QQ(\tau,a))=\bs{R_m}$ and both
 ${x}_m(t;\tau,\QQ(\tau,a))>0$ and
$\dot{x}_m(t;\tau,\QQ(\tau,a))<0<\dot{y}_m(t;\tau,\QQ(\tau,a))$ hold for any $\tau \le t \le T_m(\tau,a)$, see \eqref{ddd}.

 Since $\bs{\phi_m}(t;\tau,\QQ(\tau,a)) \equiv \bs{\phi}(t;\tau,\QQ(\tau,a))$ for any $t \le \tau_1$, if $T_m(\tau,a)\leq \tau_1$, the thesis is achieved. So, we assume $T_m(\tau,a)>\tau_1$; since $K_m$ is constant then $\HH(\bs{\phi_m}(t;\tau,\QQ(\tau,a)); \tau_1)$ is constant   when $t\geq \tau_1$,  see \eqref{ripristi}, then
$$
   \HH(\bs{\phi_m}(t;\tau,\QQ(\tau,a)); \tau_1) \equiv   \HH(\bs{R_m};\tau_1)= \tfrac{p}{p-1}|R_m|^{p/(p-1)}>0\,,
$$
for any $t\in[\tau_1,T_m(\tau,a)]$.

  Let us set
 $\bs{S}=(S_x,S_y):= \bs{\phi}(\tau_1;\tau,\QQ(\tau,a))= \bs{\phi_m}(\tau_1;\tau,\QQ(\tau,a))$, then from the previous estimate we get $\HH(\bs{S}; \tau_1)=      \HH(\bs{R_m};\tau_1)>0$.
Notice that we can rewrite the image of $\bs{\phi_m}(t;\tau,\QQ(\tau,a))$ as a graph in $x$, i.e. there is a decreasing smooth function $f_m$ such that
 \begin{equation*}
   \begin{split}
        \mathcal{F}:= & \{\bs{\phi_m}(t;\tau,\QQ(\tau,a)) \mid  \tau_1 \le t \le T_m(\tau,a)\}
      =   \{(x,y) \mid y\!=\! f_m(x) , \; 0 \le x \le S_x \}.
   \end{split}
 \end{equation*}
Let us define
 $$T(\tau,a)= \sup \{ s\geq \tau_1 \mid \dot x(t;\tau,\QQ(\tau,a))<0< x(t;\tau,\QQ(\tau,a)) \, \textrm{ for any $\tau_1 \le t \le s$}\};$$
from~\eqref{ripristi} we have
$$
\HH(\bs{\phi}(t;\tau,\QQ(\tau,a));\tau_1) \geq
\HH(\bs{S};\tau_1)= \HH(\bs{R_m};\tau_1)>0\,,
$$
for every $\tau_1 \le t \le T(\tau,a)$.
  Therefore
 we see that the trajectory $\bs{\phi}(t;\tau,\QQ(\tau,a))$ is forced to stay in the unbounded  set
 \begin{equation*}
   \begin{split}
   \Lambda = & \{ (x,y) \mid  0\le x \le S_x , \; y <0 ,\;  \HH(x,y;\tau_1) \ge \HH(S_x,S_y;\tau_1)\} \\
   = &\{(x,y) \mid 0\le x \le S_x \, ; y \le f_m(x)\},
      \end{split}
 \end{equation*}
 whenever $\tau_1 \le t \le T(\tau,a)$.
 Observe now that the maximum of $\dot{x}(x,y)= \alpha x -|y|^{1/(p-1)}$ within $\Lambda$ is obtained in $\mathcal{F}$ which is compact; further it has to be negative, so there is
 $C>0$ such that $\dot{x}(t;\tau,\QQ(\tau,a))\le -C$ when  $\tau_1 \le t \le T(\tau,a)$.
 Then it is easy to check that $\bs{\phi}(t;\tau,\QQ(\tau,a))$ is bounded when  $\tau_1 \le t \le T(\tau,a)$.
So, from elementary considerations, we see that $\bs{\phi}(t;\tau,\QQ(\tau,a))$ crosses the $y$ negative semiaxis at $t=T(\tau,a)<\tau_1+ S_x/C$.

\end{proof}

The previous lemmas permit us to complete the proof of Lemma~\ref{l.new.technical}.

\begin{proof}[Proof of Lemma~\ref{l.new.technical}]
We fix $a$ as in~\eqref{fix-a}, and consider the segment $\linea(a)$ defined as in~\eqref{defLa}.
From Remark~\ref{r.La}, there is $N(a)>0$ such that $W^u(\tau)$ intersects $\linea(a)$ in $\QQ(\tau,a)$ for every $\tau<-N(a)$.

Using Lemma~\ref{l.corresp.new}, we consider the decreasing continuous function
$d_{\linea(a)}:{}]-\infty, -N(a)[{} \to {}]D,+\infty[{}$ such that the solution $u(r,d_{\linea(a)}(\tau))$ of~\eqref{eq.pla} corresponds to the trajectory $\bs{\phi}(t;\tau,\QQ(\tau,a))$ of~\eqref{sist-din}.

Then, either Lemma~\ref{l.tecnico1} or~\ref{l.tecnico2} applies providing the value $\hat N(a)\geq N(a)$ such that the solution $u(r,d_{\linea(a)}(\tau))$ is a crossing solution for every $\tau< -\hat N(a)$. Setting $\hat D$ such that $d_{\linea(a)}({}]-\infty,-\hat N(a)[{}) = {}]\hat D, +\infty[{}$,
we get ${}]\hat D, +\infty[{}\subset J$. The lemma is thus proved.
\end{proof}

\begin{proof}[Proof of Proposition~\ref{p.parzialebis}]
Take $\ep < 1/\alpha$ such that $a=a(\ee)$ defined in \eqref{def.a.hat} satisfies the inequality
 \eqref{fix-a}. Define the segment $\linea(a)$ as in \eqref{defLa}, and $\hat N(a)$, $T(\cdot,a)$ as in
Lemma \ref{l.tecnico1} or \ref{l.tecnico2}. Set $\hat D$ as in the proof of Lemma \ref{l.new.technical}.

Recalling  Lemmas~\ref{l.corresp.new} and~\ref{l.new.technical},
we have  $\tau_{\linea(a)} ( {}]\hat D, +\infty[{})
={}]-\infty,-\hat N(a)[{}$ and $\lim_{d \to +\infty} \tau_{\linea(a)}(d)= -\infty$. Moreover, the function $T(\cdot,a)$ is well defined in ${}]-\infty,-\hat N(a)[{}$, and it is continuous, cf. Remark~\ref{Tta-continuity}.

Hence, we are able to apply  Proposition~\ref{prop.lim} to infer~\eqref{lim2}.

\end{proof}

\section{Proof of the theorems}
\label{proof}
Since all the preliminaries are well-established, we conclude our paper by giving the explicit proof of our main theorems.

\begin{proof}[Proof of Theorem~\ref{t.main}]
If $\ell < \ell_{p}^*$, from Proposition~\ref{p.sub-bis} there is $\hat D>0$ such that ${}]\hat D, +\infty[{}\subset J$ and $\lim_{d\to +\infty} R(d)=0$.
So, there is $\tilde{D} \ge 0$ such that $\tilde{D}\not\in J$ and $]\tilde{D}, +\infty[ \subset J$; whence from Propositions~\ref{p.da0} and~\ref{p.daD}
we have $\lim_{d\to \tilde{D}^+} R(d)=+\infty$. Further,
since $R$ is continuous in $J$, see Proposition \ref{p.Rcontinua}, we find $R(J)={}]0,+\infty[{}$: i.e.
for every $\RDIR>0$ there is $d\in J$ such that $R(d)=\RDIR$, which amounts to say that $u(r;d)$ solves problem~\eqref{Problem}.

The second assertion
follows immediately from Proposition \ref{p.nuova}.
\end{proof}

\begin{proof}[Proof of Theorem~\ref{t.main.due}]
The proof takes advantage of Propositions~\ref{p.da0} and~\ref{p.parziale}.

If $\ell = \ell_{p}^*$, we distinguish two alternatives: either $R({}]0,+\infty[{})=[\RDIR_0,+\infty[{}$ where $\RDIR_0$ is an internal minimum of $R$ or
$R({}]0,+\infty[{})={}]\RDIR_0,+\infty[{}$ where $\RDIR_0=\liminf_{d\to +\infty} R(d)>0$.
Then, the proof is concluded.

On the other hand, if $\ell > \ell_{p}^*$,
 from $\lim_{d\to 0} R(d)=\lim_{d\to +\infty} R(d)=+\infty$, we deduce that the function $R$ has an internal minimum $\RDIR_0$, and
the pre-image $R^{-1}(\RDIR)$ has at least two elements for every $\RDIR>\RDIR_0$, thus giving the multiplicity result.
\end{proof}

\begin{remark}
We want to underline that in the critical case $\ell = \ell_{p}^*$ we are not able to discern which of the alternatives analyzed in the proof of Theorem~\ref{t.main.due} holds, and, consequently,
we are not able to say if there is a solution for $\RDIR=\RDIR_0$.
\end{remark}

\begin{proof}[Proof of Theorem~\ref{t.main.tre}]
From Proposition~\ref{p.nuova} and Lemma~\ref{l.new.technical}, the set $J$ contains a nontrivial interval, and there exists $\RDIR_0:=\inf_{d\in J} R(d)>0$.
Then, the proof follows the lines of the one of Theorem~\ref{t.main.due}, profiting from
Propositions~\ref{p.Rcontinua},~\ref{p.da0}, \ref{p.daD} and~\ref{p.parzialebis}.
\end{proof}

\section*{Acknowledgements}

Francesca Dalbono was partially supported by the PRIN Project 2017JPCAPN ``Qualitative and quantitative aspects of nonlinear PDEs'' and by FFR 2022-2023 from University of Palermo.

All the authors are members of INdAM-GNAMPA.

\newpage

\end{document}